\numberwithin{equation}{section}\newcounter{MyCounter}[section]
\newtheorem{lemma}[MyCounter]{Lemma}
\newtheorem{theorem}[MyCounter]{Theorem}
\newtheorem{nota}[MyCounter]{Notation}
\newtheorem{corollary}[MyCounter]{Corollary}
\newtheorem{remark}[MyCounter]{Remark}
\newcommand{\ol}[1]{\overline{#1}}
\def\vp{\varphi}
\def\TP{\overline{\partial}}
\def\d{\mathrm{d}}
\def\DD{\mathcal{D}}
\def\ddt{\frac{\d}{\d t}}
\def\A{\mathcal{A}}
\def\b{\mathrm{btm}}
\def\H{\mathcal{H}}
\def\i{\mathrm{in}}
\def\K{\mathcal{K}}
\newcommand{\KM}{\widetilde{K}}
\def\N{\mathbf{N}}
\def\p{\partial}
\def\ppk{\p^\vp}
\def\Q{\mathbf{Q}}
\def\R{\mathbb{R}}
\def\t{\mathrm{top}}
\def\T{\mathbb{T}}
\def\V{\mathbf{V}}
\newcommand{\vbi}{\int_0^t |\vb(\tau)|_{\dot{W}^{1,\infty}}\,d\tau}
\newcommand{\pp}{\p^{\varphi}}
\newcommand{\Dtp}{D_t^{\varphi}}
\newcommand{\nab}{\nabla}
\newcommand{\vb}{\overline{v}}
\newcommand{\omp}{\omega^{\varphi}}
\newcommand{\tu}{\tilde{u}}
\newcommand{\blue}{}
\title[The Breakdown Criterion of Incompressible Euler Equations with Surface Tension]{A Generalized Beale-Kato-Majda Breakdown Criterion for the free-boundary problem in Euler Equations with Surface Tension}
\author[Chenyun Luo]{Chenyun Luo}\thanks{ (Chenyun Luo)
Department of Mathematics, The Chinese University of Hong Kong, Shatin, NT, Hong Kong. Email: cluo@math.cuhk.edu.hk.
CL is supported in part by the Hong Kong RGC grant CUHK-24304621.}
\author[Kai Zhou]{Kai Zhou}\thanks{ (Kai Zhou) The Department of Mathematics and Institute of Mathematical Sciences, The Chinese University of Hong Kong, Shatin, NT, Hong Kong. Email: kzhou@math.cuhk.edu.hk
}
\date{\today}
\begin{document}
\maketitle
\begin{abstract}
It is shown in Ferrari \cite{Ferrari-1993CMP} that if $[0, T^*)$ is the maximal time interval of existence of a smooth solution of the incompressible Euler equations in a bounded, simply-connected domain in $\R^3$, then $\int_0^{T^*}\|\omega(t,\cdot)\|_{L^\infty}\d t=+\infty$, where $\omega$ is the vorticity of the flow. Ferrari's result generalizes the classical Beale-Kato-Majda \cite{BKM-1984CMP}'s breakdown criterion in the case of a bounded fluid domain.

   In this manuscript, we show a breakdown criterion for a smooth solution of the Euler equations describing the motion of an incompressible fluid in a bounded domain in $\R^3$ with a free surface boundary. The fluid is under the influence of surface tension.  In addition, we show that our breakdown criterion reduces to the one proved by Ferrari \cite{Ferrari-1993CMP} when the free surface boundary is fixed.  Specifically, the additional control norms on the moving boundary will either become trivial or stop showing up if the kinematic boundary condition on the moving boundary reduces to the slip boundary condition.\\
   \noindent\textbf{Keywords.} Breakdown Criterion, Incompressible Euler Equations, Surface Tension, Free-boundary Problem.\\
   \noindent\textbf{2020 Mathematics Subject Classification.} 35Q35, 35R35, 76B03, 76B45
\end{abstract}
\setcounter{tocdepth}{1}
\tableofcontents

\section{Introduction}
We consider the Euler equations modeling the motion of an incompressible fluid in a domain with a moving boundary in $\R^3$:
\begin{equation}\label{eqs-incomp-Euler}
 \begin{cases}
  \p_t u + u\cdot\nabla u + \nabla p = 0,& \quad\i\; \DD_t, \\
  \nabla \cdot u = 0,&  \quad\i\; \DD_t,
  \end{cases}
\end{equation}
where  $u:= u(t,y)$, $p:= p(t,y)$ represent the velocity and pressure of fluid, respectively. Also, for each fixed $t$,
\begin{equation*}
  \DD_t = \left\{(y',y_3)\in \R^3\,\big|\,y':=(y_1,y_2)\in\T^2,\,-b < y_3\leq \psi(t,y')\right\}
\end{equation*}
denotes the moving fluid domain.
The boundary of $\DD_t$ is given by $\p\DD_t = \p\DD_{t,\t}\cup \p\DD_{t,\b}$, where the moving boundary $\p\DD_{t,\t}$ is determined by a graph
$$\p\DD_{t,\t} = \left\{(y',y_3)\in \R^3\,\big|\,y_3 = \psi(t,y')\right\},$$
and
$$\p\DD_{t,\b} = \left\{(y',y_3)\in \R^3\,\big|\,y_3 = -b\right\}$$
is the fixed finite bottom.

The initial and boundary conditions of the system \eqref{eqs-incomp-Euler} are
\begin{equation}\label{cond-in-bdy}
  \begin{aligned}
  \text{(IC)}& \qquad u(0,\cdot):= u_0, \quad \psi(0, \cdot):=\psi_0;\\
  \text{(BC)}& \qquad\left\{\begin{aligned}
  &\p_t\psi = u\cdot N,\quad N:=(-\p_{y_1}\psi, -\p_{y_2}\psi, 1)^T,&\quad \text{on}\,\,\p\DD_{t,\t},\\
  &p = \sigma\H,&\quad \text{on}\,\,\p\DD_{t,\t},\\
  &u\cdot n = 0, \quad n:=(0,0,1)^T,  &\quad \text{on}\,\,\p\DD_{t,\b}. \end{aligned}\right.
  \end{aligned}
\end{equation}
Here, we denote by
$
\H
$ is the mean curvature of the free boundary of the fluid domain, while $\sigma>0$ is the surface tension coefficient.
Finally, we point out that the local existence theory requires that $\p\DD_{t,\t}\cap \p\DD_{t,\b}=\emptyset$ within the interval of existence $[0, T_0]$. To achieve this, we may set $\|\psi_0\|_{L^\infty(\T^2)}\leq 1$ and $b>10$. Then the continuity of $\psi(t,\cdot)$ guarantees that $\|\psi(t,\cdot)\|_{L^\infty(\T^2)}\leq 10$ holds for all $t\in [0,T_0]$.
\subsection{Fixing the Fluid Domain}
Let
$$
\Omega:= \big\{(x',x_3)\in \R^3\,\big|\,x':=(x_1,x_2)\in \T^2,\;-b < x_3 \leq 0\big\},
$$
with $\p\Omega = \Gamma_{\t}\cup \Gamma_{\b}$, where
$$
\Gamma_{\t} := \{x_3=0\},\quad \Gamma_{\b} := \{x_3=-b\}.
$$
For each fixed $t\geq 0$, we consider a family of mappings $\Phi(t, \cdot): \Omega \rightarrow \DD_t$ given by
\begin{equation}\label{op-Phi}
  \Phi(t,x', x_3)=(x',\varphi(t, x', x_3)),
\end{equation}
with
\begin{equation}\label{eq-phi}
\vp(t,x',x_3) = x_3+\chi(x_3)\psi(t,x').
\end{equation}
Here, $\chi \in C_0^\infty(-b,0]$ a cut-off function verifying
\begin{align}\label{ineq-chi}
\begin{aligned}
  \|\chi'\|_{L^\infty(-b, 0]}\leq \frac{1}{\|\psi_0\|_{L^\infty(\T^2)}+1},\quad& \|\chi''\|_{L^\infty(-b,0]}+\|\chi'''\|_{L^\infty(-b,0]} \leq C,\quad\text{for some generic}\,\,C>0,\\
  &\text{and}\,\,\chi =1 \quad\text{on}\,\,\,\,(-\delta_0, 0],
  \end{aligned}
\end{align}
holds for some $\delta_0>0$ sufficiently small. Note that the first condition in \eqref{ineq-chi} yields that
\begin{equation}
  \p_3\vp(0,x',x_3) = 1+\chi'(x_3)\psi(0,x')\geq 2c_0,
\end{equation}
for some $c_0>0$, and thus we infer from the local existence theory that
\begin{equation} \label{p_3 varphi greater than 0}
\p_3 \vp (t, x', x_3) \geq c_0, \quad \forall t\in [0, T_0],
\end{equation}
which guarantees that $\Phi(t,\cdot)$ is a diffeomophism (see Subsection \ref{sect. 2.1}). It can be seen that $\Gamma_{\t}$ and $\Gamma_{\b}$ respectively correspond to the moving surface boundary $\p\DD_{t,\t}$ and the fixed finite bottom $\p\DD_{t, \b}$ through $\Phi(t, \cdot)$.

We denote respectively by
\begin{equation}\label{eq-vel-press}
  v(t,x):= u(t,\Phi(t,x)),\qquad q(t,x) := p(t,\Phi(t,x)),
\end{equation}
the velocity and pressure defined on the fixed domain $\Omega$.
\begin{nota} [Coordinates and Derivatives] \label{nota deriv} The following notations will be used throughout this manuscript.
\begin{itemize}
\item [i.] We denote by $\p_i:= \frac{\p}{\p x_i}, i=1,2,3$ \blue{the spatial derivatives with respect to the $x$-coordinates.}
\item [ii.] \blue{We denote by $y_i =\Phi_i(t,x)$, $i=1,2,3$ the Eulerian spatial coordinates, and by $\nab_i:= \frac{\p}{\p y_i}$ the Eulerian spatial derivatives.}
\item [iii.] We use $\TP:= (\p_1, \p_2)$ to indicate tangential spatial derivatives.
\end{itemize}
\end{nota}
Then, we see that
\begin{equation}\label{eq-uvpq}
  \nabla_\alpha u \circ \Phi = \p_\alpha^\vp v,\quad \nabla_\alpha p \circ \Phi = \p_\alpha^\vp q,\; \alpha = t,1,2,3.
\end{equation}
where
\begin{equation}\label{op-diff-fix}
  \begin{aligned}
    \p_t^\vp = & \p_t - \frac{\p_t \vp}{\p_3 \vp}\p_3,\\
    \p_a^\vp = & \p_a - \frac{\p_a \vp}{\p_3 \vp}\p_3,\quad a = 1,\,2,\\
    \p_3^\vp = & \frac{1}{\p_3 \vp}\p_3.
  \end{aligned}
\end{equation}

 On the other hand,  since $\mathcal{H} = -\TP\cdot \left(\frac{\TP \psi}{\sqrt{1+|\TP\psi|^2}}\right)$, the boundary condition in \eqref{cond-in-bdy} is turned into
\begin{equation}\label{cond-in-bdy-fix}
  \left\{\begin{aligned}
  &\p_t\psi = v\cdot N,\quad N:=(-\p_1\psi, -\p_2\psi, 1)^T,&\quad \text{on}\,\,\Gamma_{\t},\\
  &q = -\sigma \TP\cdot \left(\frac{\TP \psi}{\sqrt{1+|\TP\psi|^2}}\right),&\quad \text{on}\,\,\Gamma_{\t},\\
  &v\cdot n = 0, &\quad \text{on}\,\,\Gamma_{\b}. \end{aligned}\right.
\end{equation}

Let
$$
D_t^\vp = \p_t^\vp + v\cdot\ppk
$$
be the material derivative. Then the incompressible Euler equations \eqref{eqs-incomp-Euler} with initial-boundary conditions \eqref{cond-in-bdy} is converted into
\begin{equation}\label{eqs-incomp-Euler-fix}
  \left\{\begin{aligned}
  &D_t^\vp v + \ppk q =  0,\quad &\text{in}\; \Omega,\\
 & \ppk\cdot v =  0,\quad &\text{in}\;\Omega,\\
  &\p_t \psi =  v\cdot N,\quad&\text{on}\; \Gamma_\t,\\
  &q =  -\sigma\TP \cdot \left(\tfrac{\TP \psi}{\sqrt{1+|\TP \psi|^2}}\right), \quad&\text{on}\;\Gamma_\t,\\
 & v\cdot n =  0,\quad&\text{on}\;\Gamma_\b,\\
 &(v,\psi)\big|_{t=0} =  (v_0,\psi_0).\\
  \end{aligned}\right.
\end{equation}
Also, note that we can express
\begin{align}\label{eq-Dtvp'}
\Dtp = \p_t + \vb\cdot \TP + \frac{1}{\p_3\vp}(v\cdot \N-\p_t\vp)\p_3
\end{align}
after invoking \eqref{op-diff-fix},
where $\vb:= (v_1, v_2)$ and $\N:= (-\p_1\vp, -\p_2\vp, 1)^T$. It can be seen that the kinematic boundary condition $\p_t\psi =v\cdot N$ on $\Gamma_{\t}$ indicates that
$$
\Dtp|_{\Gamma_{\t}} = \p_t + \vb\cdot \TP.
$$
Moreover, since $v\cdot n=0$ on $\Gamma_{\b}$, $\p_3\vp|_{\Gamma_{\b}}=1$, and $\p_t \vp|_{\Gamma_{\b}}=0$, we have
$$
\Dtp|_{\Gamma_{\b}} = \p_t+\vb\cdot \TP.
$$
In other words, $\Dtp|_{\p\Omega} \in \mathcal{T} (\p\Omega)$, where $\mathcal{T}(\p\Omega)$ is the tangential bundle of $\p\Omega$.  Also,  by restricting the momentum equation (the first equation of \eqref{eqs-incomp-Euler-fix}) on $\Gamma_{\b}$ and taking the normal component, one has
\begin{align}
n\cdot \p q\big|_{\Gamma_{\b}}=0. \label{cond-in-fixedbdy}
\end{align}
\begin{nota}[Norms]\label{nota norm} We adopt the following norms in the sequel of this manuscript.
\begin{itemize}
\item[i.]  ($H^s$-Sobolev norms) $\|\cdot\|_{s}:=\|\cdot \|_{H^s(\Omega)}$, $|\cdot|_s := \|\cdot\|_{H^s(\Gamma_{\t})}$.
\item[ii.] ($L^\infty$-based Sobolev norms) $\|\cdot\|_{\infty}:=\|\cdot\|_{L^\infty(\Omega)}$, $\|\cdot\|_{W^{1,\infty}}:=\|\cdot\|_{W^{1,\infty}(\Omega)}$, $|\cdot|_{\infty}:=\|\cdot\|_{L^\infty(\Gamma_{\t})}$, $|\cdot|_{W^{1,\infty}}:=\|\cdot\|_{W^{1,\infty}(\Gamma_{\t})}$.
\item[iii.] (H\"older norms) $|\cdot|_{C^k} :=\|\cdot\|_{C^k(\Gamma_{\t})}$.
\end{itemize}
\end{nota}

\subsection{Main Results}
The local existence theorem for the free-boundary incompressible Euler equations can be stated as follows: Let $(v_0, \psi_0)\in H^s(\Omega)\times H^{s+1}(\Gamma_{\t})$ for some fixed $s\geq 3$. Then there exists a $T_0>0$, depends on $\|v_0\|_{s}$ and $|\psi_0|_{s+1}$, such that the equations \eqref{eqs-incomp-Euler-fix} have a unique solution in
$$
C([0,T_0]; H^s(\Omega)\times H^{s+1} (\Gamma_{\t})).
$$
We refer to \cite{DK, Shatah-Zeng-2008CPAM-1,Shatah-Zeng-2008CPAM-2,Shatah-Zeng-2011ARMA} for the local well-posedness of the system \eqref{eqs-incomp-Euler-fix}. Also, we can retrieve the local existence from \cite[Theorem 1.1]{Luo-Zhang-2022} after taking the incompressible limit (with fixed $\sigma>0$).

\begin{theorem}\label{thm-weak-CN}
  Let $(v(t),\psi(t))\in H^{s}(\Omega)\times H^{s+1}(\Gamma_{\t})$, $s>\frac{9}{2}$, be the solution of \eqref{eqs-incomp-Euler-fix} described above. Let
  \begin{equation}\label{T*}
    T^* = \sup\left\{T>0\,\big|\,(v(t),\psi(t)) \text{ can be continued in the class}\;C([0,T]; H^3(\Omega)\times H^{4} (\Gamma_{\t}))\right\}.
  \end{equation}
  If $T^*<+\infty$, then at least one of the following three statements hold:
  \begin{itemize}
  \item [a.] \begin{equation}\label{cond-K1}
    \lim_{t\nearrow T^*}\K(t) = + \infty,
  \end{equation}
  where
  \blue{\begin{equation*}
  \begin{aligned}
 \K(t) :=& \K_1(t)+\K_2(t),\\
  \K_1(t) := |\psi(t)|_{C^3} + |\psi_t(t)|_{C^3} + |\psi_{tt}(t)|_{1.5}, &\quad \K_2(t):=\vbi+|\vb(t)|_{\infty}, 
  \end{aligned}
  \end{equation*}}
  \item [b.]\begin{equation}\label{cond-v}
    \int_{0}^{T^*}\|v(t)\|_{W^{1,\infty}}\d t = +\infty,
  \end{equation}
  \item [c.] \begin{equation}\label{cond-geo}
  \lim_{t\nearrow T^*} \left (\frac{1}{\p_3 \vp(t)} + \frac{1}{b-|\psi(t)|_{\infty}} \right)=+\infty,
  \end{equation}
  or turning occurs on the moving surface boundary.
  \end{itemize}
  \blue{Moreover, if $\vb(t)$, $\p_1 \vb(t)$, and $\p_2 \vb(t)$ are continuous on $\Omega$, then $\vbi$ in $\K_2(t)$ can be dropped. }
\end{theorem}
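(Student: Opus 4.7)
The plan is to reduce the refined criterion directly to the main part of Theorem~\ref{thm-weak-CN} by showing that, under the extra continuity hypothesis on $\vb,\p_1\vb,\p_2\vb$, the boundary seminorm $|\vb(\tau)|_{\dot{W}^{1,\infty}}$ is pointwise (in $\tau$) dominated by the bulk Lipschitz norm $\|v(\tau)\|_{W^{1,\infty}(\Omega)}$. Once this is in hand, the integrated quantity $\vbi$ is automatically absorbed into the time integral appearing in condition~(b), and so it can be removed from $\K_2$ without any loss of information.

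The crucial observation is that $\Gamma_{\t}=\T^2\times\{0\}\subset\Omega$, so that when $\p_1\vb(\tau),\p_2\vb(\tau)\in C(\Omega)$, the restrictions $\p_a\vb(\tau)\big|_{\Gamma_{\t}}$, $a=1,2$, are literal pointwise continuous functions rather than trace-theoretic objects defined modulo null sets. Combined with the compactness of $\Gamma_{\t}$ and the Sobolev embedding $H^{s-1}(\Omega)\hookrightarrow C(\overline{\Omega})$ applied to $\nab v(\tau)$ (which is at our disposal thanks to $s>9/2$), one obtains
\begin{equation*}
  |\vb(\tau)|_{\dot{W}^{1,\infty}}
  =\max_{a=1,2}\sup_{\Gamma_{\t}}|\p_a\vb(\tau)|
  \leq \|\nab v(\tau)\|_{L^\infty(\Omega)}
  \leq \|v(\tau)\|_{W^{1,\infty}(\Omega)}.
\end{equation*}
Integrating in $\tau$ then yields $\vbi\leq \int_0^t\|v(\tau)\|_{W^{1,\infty}}\,\d\tau$ for every $t\in[0,T^*)$.

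To conclude, I would argue by contradiction. Suppose $T^*<+\infty$ while (b), (c), and the divergence of the reduced quantity $\K_1(t)+|\vb(t)|_{\infty}$ all fail. The failure of (b) yields $\int_0^{T^*}\|v(\tau)\|_{W^{1,\infty}}\,\d\tau<+\infty$, and by the inequality of the previous paragraph $\vbi$ remains uniformly bounded on $[0,T^*)$. Together with the assumed boundedness of $\K_1(t)+|\vb(t)|_{\infty}$, this forces the original $\K(t)=\K_1(t)+\K_2(t)$ to stay bounded on $[0,T^*)$, which directly contradicts the main part of Theorem~\ref{thm-weak-CN} (where $\K$ still contains the $\vbi$ term).

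The only genuinely nontrivial step is the pointwise domination inequality. Its apparent subtlety comes from the fact that $|\vb|_{\dot{W}^{1,\infty}}$ is a priori a boundary norm, so that in the absence of continuity one would need to invoke a trace theorem to compare it with the interior norm $\|v\|_{W^{1,\infty}(\Omega)}$; the continuity hypothesis is precisely what reduces this comparison to an elementary restriction argument, and the rest of the proof is merely bookkeeping.
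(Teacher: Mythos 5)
Your proposal addresses only the final \emph{moreover} sentence of Theorem~\ref{thm-weak-CN} — the claim that $\vbi$ can be dropped from $\K_2(t)$ when $\vb$, $\p_1\vb$, $\p_2\vb$ are continuous on $\Omega$ — and does so by \emph{assuming} the main body of the theorem as a black box to argue against by contradiction. That main body (that $T^*<\infty$ forces at least one of (a), (b), (c)) is the substantive content and is entirely missing from your write-up: establishing it requires the reduction to the energy estimate of Theorem~\ref{thm-weak-CN-middle}, the $L^2$ estimate, the div-curl decomposition \eqref{es-div-curl-ctrl-v}, the vorticity bound of Lemma~\ref{lem-omega-p-v}, the tangential energy estimate via Alinhac's good unknowns $\V$, $\Q$ culminating in Theorem~\ref{thm-tangential}, the elliptic estimate \eqref{es-p-q-H2} for $\|\p q\|_2$, and the closing Gr\"{o}nwall step. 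None of this machinery appears in your proposal, so as a proof of the stated theorem there is a genuine and large gap.

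For the \emph{moreover} clause in isolation, your reduction is sound and in fact cleaner than the paper's treatment. The paper re-runs the tangential estimate under the continuity hypothesis, replacing $|\vb|_{W^{1,\infty}}$ by $\|v\|_{W^{1,\infty}}$ inside the bounds for $I_{12}$ and $I_{13}$ (passing from \eqref{es-I12} to \eqref{es-I12 smh}, etc.) via Lemma~\ref{thm-Linfty}, so that the final energy inequality \eqref{Eest smh} contains no $|\vb|_{\dot W^{1,\infty}}$ term. You observe instead that Lemma~\ref{thm-Linfty} already gives the pointwise domination $|\vb(\tau)|_{\dot W^{1,\infty}}\le\|v(\tau)\|_{W^{1,\infty}}$ (using that $\Gamma_\t\subset\Omega$ so the restriction is a genuine pointwise evaluation rather than a trace), hence when (b) fails the quantity $\vbi$ is automatically finite, and the negation of your reduced (a') then yields the negation of (a). That is a legitimate logical shortcut — but it presupposes the original criterion, which you have not proved, so the gap stands.
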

\begin{remark}
\blue{The last sentence in Theorem \ref{thm-weak-CN} indicates that, if $v(t)$ is a smooth solution (as opposed to a $H^s(\Omega)$-solution), then $\K_2(t)$ is reduced to $|\vb(t)|_{\infty}$. }
\end{remark}
\begin{remark}
The first term in $\K_1(t)$, i.e.,  $ |\psi(t)|_{C^3}$ controls the second fundamental form $\Theta$ of the moving boundary in $C^1(\Gamma_{\t})$, where $\Theta := \TP (\frac{N(t)}{|N(t)|})$ which contributes to $\TP^2 \psi(t)$ in the leading order. Moreover, the second and third terms in $\K_1(t)$, i.e., $|\psi_t(t)|_{C^3}$ and $|\psi_{tt}(t)|_{1.5}$,  control respectively the velocity and acceleration of the moving boundary.
\end{remark}
\begin{remark}
The quantities on the LHS of \eqref{cond-geo} are required to be finite to continue the solution. Recall that we need $\p_3\vp>0$ to ensure the mapping $\Phi(t,\cdot):\Omega\to\DD_t$ is  invertible. In addition, $b-|\psi(t)|_{\infty}>0$ ensures that the upper moving boundary is strictly above the fixed bottom.
\end{remark}

Next, we show that \eqref{cond-v} can be relaxed to $\int_0^{T^*}\|\omp(t)\|_{\infty}\d t=+\infty$, where $\omp:= \pp\times v$.
\begin{theorem}\label{thm-strong-CN}
  Let $T^*$ and $\K(t)$ be as in Theorem \ref{thm-weak-CN}. If $T^*<+\infty$, then at least one of the following three  statements hold:
  \begin{itemize}
  \item [a.] \begin{equation}\label{cond-K1'}
    \lim_{t\nearrow T^*}\K(t) = + \infty,
  \end{equation}
  \item [b'.]   \begin{equation}\label{cond-omega-1}
    \int_{0}^{T^*}\|\omp(t)\|_{L^\infty}\d t = +\infty,
  \end{equation}
  \item [c.]\begin{equation}\label{cond-geo'}
  \lim_{t\nearrow T^*} \left (\frac{1}{\p_3 \vp(t)} + \frac{1}{b-|\psi(t)|_{\infty}} \right)=+\infty,
  \end{equation}
  or turning occurs on the moving surface boundary.
  \end{itemize}
  \blue{Moreover, if $\vb(t)$, $\p_1 \vb(t)$, and $\p_2 \vb(t)$ are continuous on $\Omega$, then $\vbi$ in $\K_2(t)$ can be dropped. }
\end{theorem}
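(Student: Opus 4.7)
The strategy is to deduce Theorem~\ref{thm-strong-CN} from Theorem~\ref{thm-weak-CN} by establishing a Beale--Kato--Majda type logarithmic Sobolev inequality adapted to the moving domain. Suppose for contradiction that on $[0,T^*)$ conditions (a) and (c) of Theorem~\ref{thm-strong-CN} fail and that $\int_0^{T^*}\|\omp(t)\|_\infty\,dt<+\infty$. The central step is to prove
\[
\|v(t)\|_{W^{1,\infty}(\Omega)} \leq C_0(t)\Bigl[1+\bigl(1+\|\omp(t)\|_\infty\bigr)\log\bigl(e+\|v(t)\|_s\bigr)\Bigr],\qquad s>\tfrac{9}{2},
\]
where $C_0(t)$ depends only on $\K(t)$ and on $\tfrac{1}{\p_3\vp(t)}+\tfrac{1}{b-|\psi(t)|_\infty}$. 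Combining this with the standard $H^s$ a priori estimate for \eqref{eqs-incomp-Euler-fix}, schematically
\[
\tfrac{d}{dt}\|v(t)\|_s^2 \leq F\bigl(\K(t),\tfrac{1}{\p_3\vp(t)},\tfrac{1}{b-|\psi(t)|_\infty}\bigr)\bigl(1+\|v(t)\|_{W^{1,\infty}}\bigr)\|v(t)\|_s^2,
\]
which is available from the local well-posedness theory cited in the paper, a logarithmic Gronwall argument yields that $\|v(t)\|_s$, and hence $\int_0^{T^*}\|v(t)\|_{W^{1,\infty}}\,dt$, remains finite on $[0,T^*)$. This contradicts condition (b) of Theorem~\ref{thm-weak-CN}, thereby forcing (b') in Theorem~\ref{thm-strong-CN}.

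To derive the logarithmic inequality I would pass to Eulerian coordinates $y=\Phi(t,x)$. The diffeomorphism $\Phi(t,\cdot)$ and its inverse are bi-Lipschitz with constants controlled by $\K_1$ and $1/\p_3\vp$, so $\|v\|_{W^{1,\infty}(\Omega)} \simeq \|u\|_{W^{1,\infty}(\DD_t)}$ and $\|\omp\|_{L^\infty(\Omega)} \simeq \|\omega\|_{L^\infty(\DD_t)}$ with $\omega=\nabla\times u$, reducing the problem to $\DD_t$. I would then solve the div-curl system $\nabla\cdot u=0$, $\nabla\times u=\omega$ in $\DD_t$ subject to $u\cdot n=0$ on $\p\DD_{t,\b}$ and $u\cdot N=\p_t\psi$ on $\p\DD_{t,\t}$ via the decomposition $u=u_h+u_p$, where $u_h$ is a harmonic lifting carrying the inhomogeneous normal boundary data and $u_p$ is a divergence-free correction with vanishing normal trace. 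For $u_h$, elliptic boundary regularity combined with the $|\psi_t|_{C^3}$ bound built into $\K_1$ gives $\|u_h\|_{W^{1,\infty}(\DD_t)}\lesssim |\psi_t|_{C^3}$. For $u_p$, the classical log-Sobolev estimate of Ferrari~\cite{Ferrari-1993CMP} in bounded domains with $C^{1,1}$ boundary yields $\|u_p\|_{W^{1,\infty}(\DD_t)}\lesssim 1+(1+\|\omega\|_\infty)\log(e+\|u\|_s)$. Pulling back to $\Omega$ via $\Phi(t,\cdot)$ converts these bounds into the asserted inequality, with the geometric constants absorbed into $C_0(t)$.

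The main obstacle is to ensure that \emph{all} constants entering the logarithmic estimate depend only on $\K(t)$ and on the geometric quantities in \eqref{cond-geo'}, without any hidden dependence on $t$. This requires a quantitative version of Ferrari's argument tracking the operator norms of the Biot--Savart and Calder\'on--Zygmund-type operators on $\DD_t$ in terms of the $C^{1,1}$ shape regularity of $\p\DD_t$. The uniform bound $\p_3\vp\geq c_0>0$ from \eqref{p_3 varphi greater than 0}, the uniform separation $b-|\psi|_\infty>0$ from \eqref{cond-geo'}, and the $C^1$-control of the second fundamental form $\Theta$ via $|\psi|_{C^3}\leq \K_1(t)$ (as noted in the remark following Theorem~\ref{thm-weak-CN}) are precisely the inputs needed to prevent degeneration of $\DD_t$ and to keep Ferrari's geometric constants uniform in $t$. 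Finally, the last-sentence assertion concerning the dropping of $\vbi$ in $\K_2(t)$ when $\vb,\p_1\vb,\p_2\vb$ are continuous on $\Omega$ is inherited unchanged from the corresponding statement in Theorem~\ref{thm-weak-CN} and requires no additional input beyond what is already established there.
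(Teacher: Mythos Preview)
Your overall strategy matches the paper's proof in Section~\ref{sect. proof thm 1.2} almost exactly: pass to Eulerian coordinates, split $u=u_p+u_h$ where $u_h=\nabla\xi$ is the harmonic lift of the normal boundary data $u\cdot N=\psi_t$ (the paper calls this $\tilde u$) and $u_p=u-u_h$ satisfies the slip condition, apply Ferrari's logarithmic estimate (Lemma~\ref{thm Ferrari}) to $u_p$, control $\|u_h\|_{W^{1,\infty}}$ by a Schauder estimate (Lemma~\ref{thm Nardi}) using only $|\psi_t|_{C^{1,\gamma}}\leq\K_1$, pull back, and close with a logarithmic Gr\"onwall.

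There is, however, one genuine misattribution that you should fix. The energy inequality you invoke,
\[
\tfrac{d}{dt}\|v(t)\|_s^2 \leq F\bigl(\K(t),\tfrac{1}{\p_3\vp},\tfrac{1}{b-|\psi|_\infty}\bigr)\bigl(1+\|v\|_{W^{1,\infty}}\bigr)\|v\|_s^2,
\]
is \emph{not} supplied by the local well-posedness references cited in the paper. The LWP energy is $E_{\text{exist}}(t)=\sum_{k=0}^3(\|\p_t^kv\|_{3-k}^2+\sigma|\p_t^k\psi|_{4-k}^2)$, which involves time derivatives and does not come with a bound that is linear in $\|v\|_{W^{1,\infty}}$ with coefficients depending only on $\K(t)$; Remark~\ref{rmk E ext} explicitly says that working with $E_{\text{exist}}$ makes the present theorem ``extremely difficult.'' The estimate you need is exactly Theorem~\ref{thm-weak-CN-middle} (equation~\eqref{energy est after Gronwall}), proved at the $H^3\times H^4$ level for $E(t)=\|v\|_3^2+\sigma|\psi|_4^2$ via the Alinhac good-unknown tangential estimate of Section~\ref{sect. thm 1.1}. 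Cite that result rather than LWP, and work with $\|v\|_3$ (so that the logarithm reads $\log(e+\|v\|_3)$, matching Ferrari's input $\|\omega\|_{H^2}$) instead of $\|v\|_s$ for $s>\tfrac92$.

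A minor point: Ferrari's estimate as used here (Lemma~\ref{thm Ferrari}) requires the boundary to be $C^3$, not merely $C^{1,1}$; this is why $|\psi|_{C^3}$ sits in $\K_1$.
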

\begin{remark}\label{kt last}
Theorem \ref{thm-strong-CN} can be regarded as a generalization of the classical results of Beale-Kato-Majda \cite{BKM-1984CMP} to the free-boundary Euler equations. Specifically,  if $\psi_t=v\cdot N=0$ on $\Gamma_{\t}$, then the moving surface boundary becomes fixed; in other words, $\psi=\psi(x')$ becomes time-independent.
As a consequence,
the control norm $|\psi|_{C^3}$ in $\K_1$ reduces to a non-negative constant, whereas $|\psi_t|_{C^3}=|\psi_{tt}|_{1.5}=0$. Moreover, if $\psi_t =0$ on $\Gamma_{\t}$, the control norms  in $\K_2$ would not even appear. 
Lastly, both $\p_3\vp$ and $b-|\psi|_{\infty}$ are automatically bounded from below by a positive constant.
\end{remark}

\begin{remark}
We require $(v(t), \psi(t))\in H^s(\Omega)\times H^{s+1}(\Gamma_{\t})$, $s>\frac{9}{2}$ in Theorems \ref{thm-weak-CN} and \ref{thm-strong-CN},  but the space of continuation is merely $H^3(\Omega)\times H^4(\Gamma_{\t})$. The loss of regularity is owing to $|\psi_t|_{C^3}=|v\cdot N|_{C^3}$ in $\K(t)$, which cannot be controlled by $E(t)$. We can prove an alternative breakdown criterion in which $|\psi_t|_{C^3}$ is replaced by $|\psi_t|_{C^2}+|\psi_t|_3$, and the latter can be bounded by the energy ties to the local existence in $H^3(\Omega)\times H^4(\Gamma_{\t})$. We devote Section \ref{sect. lossless} to discuss the details.
\end{remark}

\subsection{History and Background}
The study of the free-boundary problems in Euler equations has blossomed over the past three decades. In the case without surface tension (i.e., $\sigma=0$), the first breakthrough came in Wu \cite{Wu1997LWP, Wu1999LWP}, where the local well-posedness (LWP) is established assuming the flow is \textit{irrotational}, under the Rayleigh-Taylor sign condition
\begin{equation}\label{RT sign}
- \nab_{N} p \geq c>0,\quad\text{on}\,\,\p\DD_{t,\t}.
\end{equation}
It is known that the Rayleigh-Taylor sign condition serves as an essential stability condition on the moving surface boundary to ensure the LWP when $\sigma=0$. Otherwise, Ebin \cite{Ebin1987ill} showed that \eqref{eqs-incomp-Euler}--\eqref{cond-in-bdy} is \textit{ill-posed} when $\sigma=0$ if \eqref{RT sign} is violated. We further remark that there are numerous results concerning the long-term well-posedness for the free-boundary incompressible Euler equations with small and \textit{irrotational} data, see, e.g., \cite{Alazard2013GWP, CCG,  Deng2017wwSTGWP, GMS2012GWP, HIT2016ww, IT2016ww2, IP, XW,  Wu2009GWP, Wu2011GWP}.
In the rotational case, Christodoulou--Lindblad \cite{CL2000priori} established the \textit{a priori energy estimate} for \eqref{eqs-incomp-Euler}--\eqref{cond-in-bdy} with $\sigma=0$, and the LWP  was proved by Lindblad \cite{Lindblad2003LWP} using the Nash-Moser iteration and by Zhang--Zhang \cite{ZhangZhang} using the classical energy approach. On the other hand, when $\sigma>0$, \blue{the LWP (as well as the a priori estimate that ties to LWP) for this model was proved independently by Coutand--Shkoller \cite{CS, CS2010},  Disconzi--Kukavica \cite{DK}, Disconzi--Kukavica--Tuffaha \cite{DKT}, Kukavica--Tuffaha--Vicol \cite{KTV},  and Shatah--Zeng \cite{Shatah-Zeng-2008CPAM-1, Shatah-Zeng-2008CPAM-2, Shatah-Zeng-2011ARMA}. Also, Kukavica--Ozanski \cite{KO} studies the LWP with localized $H^{2+\delta}$-vorticity near the free boundary. }

Moreover, there are available results (e.g., \cite{Dan, WangZhang, WZZZ}) concerning the breakdown criterion for the free-boundary Euler equations when $\sigma=0$. Particularly, using paradifferential calculus,  the authors of \cite{WangZhang, WZZZ} proved that, for $T<T^*$, 
\begin{align}
\sup_{t\in[0,T]}\left(\|\mathcal{H}(t)\|_{L^p\cap L^2(\p\DD_{t,\t})}+\|u(t)\|_{W^{1,\infty}(\DD_t)}\right) <+\infty,\quad p\geq 6,\label{Zhifei}\\
\inf_{(t,y')\in \p\DD_{t,\t}}-\nab_N p (t,y')\geq c>0, \label{RT sign criter}
\end{align}
together with a condition analogous to \eqref{cond-geo}.
 Note that \eqref{Zhifei} depends on the boundedness of $\sup_{t\in[0,T]}\|u(t)\|_{W^{1,\infty}(\DD_t)}$, which is stronger than $\int_0^T\|\omega(t)\|_{L^\infty(\DD_t)}\d t<\infty$, where $\omega:=\nab\times u$. Apart from this, \eqref{RT sign criter} is imposed to avoid the Rayleigh-Taylor breakdown described in \cite{CCFGL}.
Recently, Ginsberg \cite{Dan} proved an alternative breakdown criterion by adapting the method of \cite{CL2000priori}, which states that
if $T<T^*$, then
\begin{align}\label{Dan criter}
\int_0^T \Big(\|\omega(t)\|_{L^\infty(\DD_t)}^2+\|\nab u(t)\|_{L^\infty(\p\DD_{t,\t})}+\|\mathcal{N}(u|_{\p\DD_{t,\t}})\|_{L^\infty(\p\DD_{t,\t})}+\|\nab_ND_tp(t)\|_{L^\infty(\p\DD_{t,\t})}\Big)\d t<\infty.
\end{align}
 Here, $\mathcal{N}$ denotes the Dirichlet-to-Neumann operator.  On the other hand, Julin--La Manna \cite{JLa} studied the a priori estimates for the motion of a charged liquid droplet in Eulerian coordinates with $\sigma>0$. As a by-product, they show if $T<T^*$, then
\begin{align}\label{Lamanna criter}
\sup_{t\in[0,T]} \left(|\psi(t)|_{C^{1,\gamma}}+|\mathcal{H}(t)|_{L^1(\p\DD_{t,\t})}+\|\nab u(t)\|_{L^\infty(\DD_t)}+|u_N(t)|_{H^2(\p\DD_{t,\t})}\right)<+\infty,
\end{align}
where the $C^{1,\gamma}$-norm of $\psi$ is expected to be sharp. Nevertheless, compared with Remark \ref{kt last}, it appears to be hard to further reduce either of the aforementioned breakdown criteria to $\int_0^T\|\omega(t)\|_{L^\infty(\DD_t)}\d t<\infty$ if $\p\DD_{t,\t}$ becomes fixed.

\subsection{What is New?}
In this manuscript, we demonstrate a new breakdown criterion for the free-boundary Euler equations when $\sigma>0$. Specifically, with the help of some carefully chosen control norms with explicit physical background on the moving boundary, we can reduce our breakdown criterion to
\blue{the classical Beale-Kato-Majda criterion in a bounded, simply-connected domain, which was shown in \cite{Ferrari-1993CMP}}
 if the kinematic boundary condition on the moving surface boundary is reduced to the slip boundary condition (Remark \ref{kt last}). Moreover,
\blue{if $T^*<+\infty$ and conditions (a) and (c) in Theorem \ref{thm-strong-CN} do not occur, then $\int_0^{T^*} \|\omega(t)\|_{L^\infty(\DD_t)}\,dt=+\infty$. }
  On the other hand, this implies that if $\omega(0)=\mathbf{0}$, then the 3D free-boundary Euler equations with surface tension can blow up only on the moving surface boundary caused by either condition (a) or (c) in Theorem \ref{thm-strong-CN}.

\subsection{Organization} This manuscript is organized as follows. In Section \ref{sect. auxiliary}, we introduce some fundamental results that will be frequently used in our analysis. Apart from this, we provide an overview of the proof of the main theorems in Subsection \ref{subsect 2.5}. Sections \ref{sect. thm 1.1} and \ref{sect. proof thm 1.2} are devoted respectively to prove Theorem \ref{thm-weak-CN} and \ref{thm-strong-CN}. Finally, in Section \ref{sect. lossless}, we provide an alternative criterion with modified control norms without regularity loss.

\subsection{A List of Notations} Apart from the derivatives in Notation \ref{nota deriv} and norms in Notation \ref{nota norm}, we itemize below a list of frequently used notations in this manuscript.
\begin{itemize}
\item $v=u\circ\Phi$, $q=p\circ \Phi$, and $\omp=\pp\times v$. Also, $\omp = \omega\circ\Phi$, where $\omega=\nab\times u$.
\item Let $\mathcal{T}$ be a differential operator. Then $[\mathcal{T}, f]g =\mathcal{T}(fg)-f\mathcal{T}g $, and $[\mathcal{T}, f, g]=\mathcal{T}(fg)-g\mathcal{T}f-f\mathcal{T}g$.
\item We denote by $P=P(\cdots)$ a generic non-negative function in its arguments, and by $C=C(\cdots)$ a positive constant.
\end{itemize}

\subsection*{Acknowledgment}
 The authors would like to thank Francisco Gancedo, Yao Yao, and Junyan Zhang for sharing their insights.
Also, the authors thank the anonymous referee for helpful comments that improved the quality of the manuscript. 

\section{Some Auxiliary Results and an Overview of Our Strategy} \label{sect. auxiliary}
\subsection{The Change of Coordinates $\Phi(t,\cdot)$}\label{sect. 2.1}
Since $\pp_a = \p_a -\frac{\p_a\vp}{\p_3\vp}\p_3$, $a=1,2$, and $\pp_3= \frac{1}{\p_3\vp}\p_3$, we have
\begin{equation}\label{eq-ppk-p}
\begin{pmatrix}
\pp_1\\
\pp_2\\
\pp_3
\end{pmatrix}
=
\begin{pmatrix}
1&0&-\frac{\p_1 \varphi}{\p_3\varphi}\\
0&1&-\frac{\p_2\varphi}{\p_3\varphi}\\
0&0&\frac{1}{\p_3\varphi}
\end{pmatrix}
\begin{pmatrix}
\p_1\\
\p_2\\
\p_3
\end{pmatrix}.
  \end{equation}
In other words, let
\begin{equation} \label{A}
\A := \begin{pmatrix}
1&0&-\frac{\p_1 \varphi}{\p_3\varphi}\\
0&1&-\frac{\p_2\varphi}{\p_3\varphi}\\
0&0&\frac{1}{\p_3\varphi}
\end{pmatrix}^T
\end{equation}
be the cofactor matrix associated with $\Phi$.
Then for each $i=1,2,3$,
\begin{equation}
\pp_i = \A_i^j\p_j.
\end{equation}
The Einstein summation convention is used here and in the sequel on repeated upper and lower indices.  Also, $\A$ is invertible as long as $\p_3 \varphi>0$, where
\begin{equation} \label{A inverse}
\A^{-1} =
\begin{pmatrix}
1&0&\p_1\vp\\
0&1&\p_2 \vp\\
0&0&\p_3\vp
\end{pmatrix}^T,
\end{equation}
  and
  \begin{equation}
  \p_i = (\A^{-1})_i^j \pp_j.
  \end{equation}
 \subsection{The Sobolev and H\"older Norms of $\vp$}
 In light of \eqref{eq-phi}, we can reduce both the interior Sobolev and H\"older norms of $\vp$ to the associated boundary norms of $\psi$. Particularly, we have
 \begin{align*}
\p_t \vp = \chi\p_t\psi, \quad \TP \vp = \chi \TP\psi,\quad \p_3\vp =1+\chi' \psi.
 \end{align*}
 Invoking \eqref{ineq-chi},
 this implies:
  \begin{align}\label{vp to psi}
  \begin{aligned}
 & \|\vp\|_{C^k(\Omega)} \leq  C(|\psi|_{C^k}+1), \quad \|\p_t\vp\|_{C^k(\Omega)} \leq C|\p_t\psi|_{C^k},  \quad k=0,1,2,3,\\
  &\|\vp\|_s \leq C(|\psi|_s+1), \quad \|\p_t \vp\|_{s} \leq C|\p_t\psi|_s, \quad 0\leq s\leq 3.
  \end{aligned}
  \end{align}
  These estimates will be adapted frequently and silently in the rest of this manuscript.

  \subsection{The Hodge-type Div-Curl Estimate} The following Hodge-type elliptic estimates play a crucial role while bounding $\|v\|_3$ and $\|\p q\|_2$ in the upcoming sections. Here, we denote by $\p q$ the vector $(\p_1 q, \p_2 q, \p_3 q)^T$.
  \begin{lemma}\label{Lem-Hodge}
  For any sufficiently smooth vector field $X$ and integer $s\geq 1$, there exist $C_0:= C_0(|\psi|_{C^s})>0$ such that
  \begin{equation}\label{es-Hodge-1}
    \|X\|_s^2 \leq C_0(|\psi|_{C^s})\left(\|\ppk\cdot X\|_{s-1}^2 + \|\ppk\times X\|_{s-1}^2 + \|\overline{\partial}^s X\|_0^2+\|X\|_0^2\right),
  \end{equation}
  where $\displaystyle \overline{\partial}^s X = \sum_{|\alpha|=s}\overline{\partial}^\alpha X$. Also, for $s>1.5$, there exists $C_1:= C_1(|\psi|_{C^{s+1}})>0$, so that
  \begin{equation}\label{es-Hodge-2}
  \|X\|_s^2 \leq C_1(|\psi|_{C^{s+1}})\left(\|\ppk\cdot X\|_{s-1}^2 + \|\ppk\times X\|_{s-1}^2 + |X\cdot N|_{s-0.5}^2+\|X\|_0^2\right),
  \end{equation}
  provided that $X\cdot n=0$ on $\Gamma_{\b}$.
\end{lemma}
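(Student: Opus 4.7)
The plan is to establish both estimates by combining a pointwise algebraic identity that recovers the normal derivative from tangential derivatives together with $\pp\cdot X$ and $\pp\times X$, with a standard integration-by-parts identity that converts the interior $\TP^s$-term in \eqref{es-Hodge-1} into a boundary $X\cdot N$-term in \eqref{es-Hodge-2}.

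First I would record the following algebraic observation, valid whenever $\p_3\vp\geq c_0>0$: the system $\pp\cdot X=f$, $\pp\times X=g$ can be solved for $\pp_3 X$ in terms of $\pp_1 X$, $\pp_2 X$, $f$, and $g$, namely $\pp_3 X_3=f-\pp_1 X_1-\pp_2 X_2$, $\pp_3 X_1=g_2+\pp_1 X_3$, and $\pp_3 X_2=\pp_2 X_3-g_1$. Since $\pp_a=\p_a-(\p_a\vp/\p_3\vp)\p_3$ for $a=1,2$ and $\pp_3=(1/\p_3\vp)\p_3$, this is equivalently an algebraic identity recovering $\p_3 X$ from $\TP X$, $f$, $g$, and the $\A$-coefficients (which are functions of $\psi$).

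For \eqref{es-Hodge-1} I would argue by induction on $s$. The base case $s=1$ follows from the algebraic identity above, using \eqref{vp to psi} to absorb the $\A$-coefficients into $|\psi|_{C^1}$. In the induction step, applying $\TP^{s-1}$ to the algebraic identity expresses $\TP^{s-1}\p_3 X$ in terms of $\TP^s X$, $\TP^{s-1}(\pp\cdot X)$, $\TP^{s-1}(\pp\times X)$, and commutator remainders $[\TP^{s-1},\A]$ that are controlled by $|\psi|_{C^s}$ and the lower-order quantity $\|X\|_{s-1}$; closing with the inductive hypothesis yields \eqref{es-Hodge-1}. For \eqref{es-Hodge-2}, the idea is to replace $\|\TP^s X\|_0$ on the right by the boundary norm $|X\cdot N|_{s-0.5}$ using the classical div-curl integrated identity, which in the $\pp$-framework picks up a $\p_3\vp$ Jacobian and takes the schematic form $\int_\Omega |\pp Y|^2\,\p_3\vp\,\d x=\int_\Omega(|\pp\cdot Y|^2+|\pp\times Y|^2)\p_3\vp\,\d x+B(Y)+R(Y)$, where $B(Y)$ collects boundary terms on $\p\Omega$ and $R(Y)$ collects interior commutator remainders. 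I would apply this to $Y=\TP^\beta X$ with $|\beta|=s-1$ (for non-integer $s$, to a fractional-tangential operator $\langle\TP\rangle^{s-1}X$ followed by interpolation). On $\Gamma_\b$, the hypothesis $X\cdot n=0$ forces $X_3=0$, which eliminates the top-order contribution to $B(Y)$; on $\Gamma_\t$ the geometric normal is $N$, and a careful rearrangement of $B(Y)$ exposes $X\cdot N$ as the sole top-order factor, with the remaining factor dominated by curvature-type coefficients bounded by $|\psi|_{C^{s+1}}$. Trace estimates together with standard interpolation of the lower-order pieces then deliver \eqref{es-Hodge-2}.

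The main technical obstacle I would expect is the sharp commutator tracking: keeping $C_0$ dependent only on $|\psi|_{C^s}$ in \eqref{es-Hodge-1}, while allowing $C_1$ in \eqref{es-Hodge-2} to pick up the unavoidable $|\psi|_{C^{s+1}}$ coming from the curvature-type factor in the $\Gamma_\t$ boundary integrand. A secondary subtlety is the non-integer $s$ case in \eqref{es-Hodge-2}, where a fractional tangential operator $\langle\TP\rangle^{s-1}$ must replace $\TP^{s-1}$, and lower-order trace contributions must be absorbed via interpolation rather than direct integration by parts.
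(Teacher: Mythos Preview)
Your approach for \eqref{es-Hodge-1} is essentially the paper's: the paper (Appendix~C) also builds on the pointwise algebraic identity recovering $\p_3 X$ from $\TP X$, $\pp\cdot X$, $\pp\times X$, and the $\A$-coefficients, and then argues by induction on $s$. One minor difference in execution: the paper applies the pointwise identity to $\p^\beta X$ for a \emph{general} multi-index $|\beta|=s-1$ (not only the purely tangential $\TP^{s-1}$), which immediately captures every $s$-th order derivative including $\p_3^s X$; the resulting term $\|\TP\p^\beta X\|_0$ is then reduced iteratively, each pass trading one normal derivative for a tangential one. Your sketch, which only explicitly differentiates the identity by $\TP^{s-1}$, implicitly relies on applying the inductive hypothesis to $\TP X$ (or equivalently on iterating the identity in the normal direction) to reach $\p_3^s X$; this works, but you should make that step explicit to avoid the appearance of a gap.

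For \eqref{es-Hodge-2} the paper does not give a proof at all---it simply cites Cheng--Shkoller \cite{cheng2017solvability}. Your outline via the integrated div-curl identity with boundary terms is a reasonable route (and is in the spirit of what that reference does), so there is nothing to compare here beyond noting that the paper delegates this estimate entirely.
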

\begin{proof}
The estimate \eqref{es-Hodge-2} is proved in \cite{cheng2017solvability}, while we refer to Appendix \ref{appendix C} for the proof of \eqref{es-Hodge-1}.
\end{proof}

\subsection{An Overview of Our Strategy} \label{subsect 2.5}
A crucial step to prove Theorem \ref{thm-strong-CN} via Theorem \ref{thm-weak-CN} is to establish an energy estimate for
$$E(t):=\|v(t)\|_3^2+\sigma |\psi|_4^2$$
that take the following form:
\begin{align}\label{strategy energy}
 E(t)+\sqrt{E(t)}\leq P(c_0^{-1}, \K_1(t))E(0) &+ \blue{\int_0^t P(c_0^{-1}, \K_1(\tau))\left([\,1+ \|v(\tau)\|_{W^{1,\infty}}\,][\,1+ |\vb(\tau)|_{\infty}\,](\,E(\tau) + \sqrt{E(\tau)}\,)\right)\d \tau}\nonumber\\
 &\blue{+ \int_0^t P(c_0^{-1}, \K_1(\tau))\left([\,1+ |\vb(\tau)|_{W^{1,\infty}}\,](\,E(\tau) + \sqrt{E(\tau)}\,)\right)\d \tau,}
\end{align}
where $P(\cdots)$ denotes a non-negative continuous function in its arguments. \blue{Here, the second line in \eqref{strategy energy} drops if $\vb(t)$, $\p_1 \vb(t)$, and $\p_2 \vb(t)$ are continuous on $\Omega$. }

\blue{It is important to notice that the first line in the energy estimate \eqref{strategy energy} must be linear in both $E(\tau)+\sqrt{E(\tau)}$ and $\|v(\tau)\|_{W^{1,\infty}}$ under the time integral. Once \eqref{strategy energy} is done, we can prove Theorem \ref{thm-strong-CN} by adapting the $L^{\infty}$-Calderon-Zygmund-type estimate in a bounded, simply connected $C^3$-domain (i.e., Lemma \ref{thm Ferrari}) to $\DD_t$. Here, we apply the $L^\infty$-Calderon-Zygmund estimate to the modified velocity field $V$ that verifies the slip boundary condition on the moving surface boundary $\p\DD_{t, \t}$. This can be done by considering $V=u-\tilde{u}$ with $\tilde{u}=\nab\xi$, where $\xi$ is harmonic in $\DD_t$, and satisfying the Neumann boundary condition $\nab_{N} \xi = u\cdot N$ on $\p\DD_{t,\t}$. }
\subsubsection{Proof of \eqref{strategy energy}: }
\blue{The rest of this section is devoted to discussing the proof of \eqref{strategy energy} in succinct steps.\\
\\
\textbf{Step 1: The div-curl analysis}} \\
We adapt \eqref{es-Hodge-1} in Lemma \ref{Lem-Hodge} to decompose $\|v\|_3^2$ into $\|\omp\|_2^2$ and $\|\TP^3 v\|_0^2$ at the leading order. The curl part $\|\omp\|_2^2$ can be controlled straightforwardly by invoking the evolution equation of $\omp$. Moreover, a large portion of Section \ref{sect. thm 1.1} is devoted to control $\|\TP^3 v\|_0^2$ by considering $\TP^3$-differentiated \eqref{eqs-incomp-Euler-fix}. Note that the commutator $[\TP^3, \pp]$ yields a top order term consisting of $4$ spatial derivative on $\vp$. However, we can avoid this by considering the so-called Alinhac's good unknowns of $v$ and $q$, i.e.,
$$\V = \TP^3 v -\p_3^\vp v \TP^3\vp, \quad \Q = \TP^3 q -\p_3^\vp q \TP^3\vp,
$$
 and then obtain an estimate for $\V$ in $L^2(\Omega)$ instead. We need to employ the structure of the equations verified by $\V$ and carefully designed control norms in $\K_1(t)$ to obtain the required linear structure in \eqref{strategy energy}. Specifically, it is helpful to control $\TP^3\vp$ in $L^\infty(\Omega)$ to ensure the linear structure required by \eqref{strategy energy}, where $\|\TP^3\vp\|_{\infty}$ can then be reduced to $|\psi|_{C^3}$ by  \eqref{vp to psi}.\\
\\
\blue{\textbf{Step 2: The tangential energy estimate via good unknowns}\\
This is the most important intermediate step that leads to \eqref{strategy energy}. In particular, we prove that
\begin{align}\label{strategy tangential}
\frac{d}{dt}\left(\|\V(t)\|_0^2+\sigma |\psi|_4^2\right)&\leq P(c_0^{-1}, \K_1(t))\left([\,1+ |\vb(t)|_{\infty}\,][\,1+ \|v(t)\|_{W^{1,\infty}}\,](\,E(t) + \sqrt{E(t)}\,)\right)\nonumber\\
&+P(c_0^{-1}, \K_1(t))\left([\,1+ |\vb(t)|_{W^{1,\infty}}\,](\,E(t) + \sqrt{E(t)}\,)\right).
\end{align}
We establish \eqref{strategy tangential} by testing the higher-order Euler equations (i.e., \eqref{eqs-AGU}) with $\V$ and then integrating in $\Omega$ with respect to $\p_3\vp \,dx$.  The most difficult term generated in this process is the boundary integral
$$
-\int_{\Gamma_{\t}}\Q (\V\cdot N)\,dx'.
$$
We have
\begin{equation}\label{strategy kinematic BC}
\V\cdot N= \p_t\TP^3\psi  + \overline{v}\cdot\TP\left(\TP^3\psi\right)+\cdots, \quad \text{on}\,\,\Gamma_{\t},
\end{equation}
which is obtained by taking $\TP^3$ to $\p_t \psi = v\cdot N$. Here and in the sequel, we employ $\cdots$ to denote easy-to-control error terms.  Also,
\begin{equation}\label{strategy Q}
\Q=-\sigma\TP^3\left( \TP\cdot \tfrac{\TP\psi}{|N|}\right) -\p_3 q \TP^3\psi, \quad \text{on}\,\,\Gamma_{\t}.
\end{equation}}
In light of \eqref{strategy kinematic BC} and \eqref{strategy Q}, we decompose $-\int_{\Gamma_{\t}}\Q (\V\cdot N)\,dx'$ into
\begin{equation}\label{strategy I1}
\begin{aligned}
-\int_{\Gamma_{\t}}\Q (\V\cdot N)\d x'=\sigma\int_{\Gamma_{\t}}\TP^3\left(\TP\cdot \frac{\TP \psi}{|N|}\right)\p_t\TP^3\psi\d x' +\int_{\Gamma_\t} (\p_3 q)(\TP^3\psi)\overline{v}\cdot \TP\left(\TP^3\psi\right) \d x' \\
+\sigma\int_{\Gamma_{\t}}\TP^3\left(\TP\cdot \frac{\TP \psi}{|N|}\right)\vb\cdot \TP \left(\TP^3\psi\right)\d x'+\cdots.
\end{aligned}
\end{equation}
Integrating $\TP\cdot$ by parts, the first term in \eqref{strategy I1} yields the energy term $-\frac{d}{dt}|\TP^4 \psi|_0^2$, together with an error
\begin{align}\label{strategy error}
-\sigma\int_{\Gamma_\t}\left[\TP^2,\,\frac{1}{|N|}\right]\TP^2\psi\cdot \p_t\TP^3\TP\psi\d x'
\end{align}
 at the leading order. Integrating $\TP$ on $\p_t\TP^3\TP\psi$ by parts, this term can be controlled by $P(|\psi|_{C^3})|\psi_t|_{C^3} E$. Note that we need to assign $\TP^3\p_t\psi$ in $L^\infty$ since it is not part of the energy $E$. Moreover, the second term in \eqref{strategy I1} can be controlled by $P(|\psi|_{C^3})|\vb|_{\infty}\|\p q\|_2\sqrt{E}$. Here, we cannot simply bound $|\vb|_{\infty}$ by $\|v\|_{W^{1,\infty}}$ because there is an extra $\|v\|_{W^{1,\infty}}$ generated by the control of $\|\p q\|_2$. \blue{Also, to control the third term in \eqref{strategy I1},  the quantity $|\TP \vb|_{\infty}$ needs to remain bounded in time, where $\TP \vb$ consists of $\p_1\vb$ and $\p_2 \vb$. We point out here that the control of the third term does not involve $\|\p q\|_2$, and so no additional $\|v\|_{W^{1,\infty}}$ would appear.  
 Thus, we have $|\TP \vb|_{\infty} \leq \|\TP \vb\|_{\infty} \leq \|v\|_{W^{1,\infty}}$ by invoking Lemma \ref{thm-Linfty} provided that $\TP\vb$ is continuous on ${\Omega}$. Thanks to this,  the second line in \eqref{strategy tangential} (and hence the second line in \eqref{strategy energy}) can be dropped. As a consequence, the control norm $\vbi$ in $\K_2(t)$ no longer appears when $v(t)$ is a smooth solution.}
\begin{remark} \label{rmk E ext}
 In fact, $|\TP^3\p_t\psi|_0$ is part of the energy involving time derivatives that ties to the local existence of \eqref{eqs-incomp-Euler-fix}, where
\begin{equation*}
E_{\text{exist}}(t)= \sum_{k=0}^{3}\left(\|\p_t^k v(t)\|_{3-k}^2+\sigma |\p_t^k\psi|_{4-k}^2\right).
\end{equation*}
The local existence theory implies that, if $(v(t), \psi(t))\in C([0, T_0]; H^3(\Omega)\times H^4(\Gamma_{\t})$, then
$$
E_{\text{exist}}(t) \leq C(E_{\text{exist}}(0))
$$
holds $\forall t\in [0,T_0]$.
 However, it is difficult to study the breakdown criterion by employing the energy estimate for $E_{\text{exist}}$ as additional interior control norms involving time derivatives of $v$ must be introduced accordingly. As a consequence, we find that it is extremely difficult to prove Theorem \ref{thm-strong-CN} using $E_{\text{exist}}$.
 \end{remark}

\blue{\textbf{Step 3: Estimation of $\|\p q\|_2$}\\
We need to control $\|\p q\|_2$ while studying the tangential estimate \eqref{strategy tangential}.
  Particularly, we study $\|\p q\|_2$ by employing the elliptic equation verified by $q$ equipped with Neumann boundary conditions:
 \begin{equation}
  \begin{aligned}
  -\triangle^\vp q = & (\ppk v)^{T}: (\ppk v),\quad& \i \;\Omega,\\
   N\cdot \ppk q = & -(\overline{v}\cdot\TP v)\cdot N - \p_t^2 \psi - (\overline{v}\cdot\TP)(v\cdot N),  \quad&\text{on}\;\Gamma_\t,\\
  n\cdot \p q=& 0,\quad&\text{on}\;\Gamma_\b.
  \end{aligned}
\end{equation}
We infer from the boundary condition on $\Gamma_{\t}$ that $|\psi_{tt}|_{1.5}$ is needed while controlling $\|\p q\|_2$. We cannot use the Dirichlet boundary condition
$$
q=\sigma\mathcal{H},\quad \text{on}\,\,\Gamma_{\t}
$$
 here as the norms in $\K(t)$ fail to control $|\mathcal{H}|_{2.5}$. Furthermore, since the source term of the elliptic equation of $q$ is quadratic in $\pp v$, it is natural to expect that we require an additional $\|v\|_{W^{1,\infty}}$ when estimating $\|\p q\|_2$. This, together with the estimate of the second term in \eqref{strategy I1} discussed above, implies that we have to put $|\vb(t)|_{\infty}$ to be part of $\K_2(t)$ to ensure that \eqref{strategy energy} is linear in $\|v\|_{W^{1,\infty}}$. }

 \subsubsection{Comparison with the $\sigma=0$ case:} In addition to the leading order error \eqref{strategy error}, the first term on the RHS of \eqref{strategy I1} also generates:
 \begin{equation}\label{strategy RST}
  \int_{\Gamma_\t} \p_3 q\TP^3\psi \p_t\TP^3\psi \d x'.
 \end{equation}
 Note that if the Rayleigh-Taylor sign condition $-\p_3 q(t)\geq c>0$ holds when $t\in [0,T]$, then \eqref{strategy RST} contributes to
 \begin{align}
 -\frac{d}{dt}  \int_{\Gamma_\t} (-\p_3 q)|\TP^3\psi|^2\d x'-\int_{\Gamma_\t} \p_3 \p_t q|\TP^3\psi|^2\d x'.
 \end{align}
 The first term is the boundary energy under the Rayleigh-Taylor sign condition, and the control of the second term requires $|\p_3 \p_t q|_{L^\infty(\Gamma_{\t})}$ to be included in the control norm. This is related to the last quantity on the LHS of Ginsberg's criterion \eqref{Dan criter}. On the other hand, when $\sigma>0$,  \eqref{strategy RST} can be controlled directly by $|\psi_t|_{C^2}\|\p q\|_2 \sqrt{E}$ after integrating $\TP$ in $\p_t\TP^3 \psi$ by parts. This indicates that the surface tension yields a stronger control on the moving boundary and so $|\p_3 \p_t q|_{L^\infty(\Gamma_{\t})}$ is no longer required as part of the control norms.

 \begin{remark}
 It is well-known that one can study the free-boundary problem \eqref{eqs-incomp-Euler}--\eqref{cond-in-bdy} under the Lagrangian coordinates, which is characterized by the flow map $\eta(t,x)$ satisfying $\p_t \eta(t, x) = u(t, \eta(t,x))$. Nevertheless, obtaining a breakdown criterion parallel to Theorem \ref{thm-strong-CN} is difficult under Lagrangian coordinates. The reason is twofold. First,  there are no estimates analogous to \eqref{vp to psi} available in Lagrangian coordinates. Thus one has to introduce new interior control norms, which are not physical compared with the boundary control norms in $\K(t)$. Second, the surface tension takes a different formulation in Lagrangian coordinates, which is more difficult to study than $\mathcal{H}=-\TP\cdot \left(\frac{\TP\psi}{|N|}\right)$. It is still unclear how to obtain an energy estimate analogous to \eqref{strategy energy} under the Lagrangian setting.
 \end{remark}
\section{Proof of Theorem \ref{thm-weak-CN}}\label{sect. thm 1.1}
We proceed with the proof by contradiction. Assuming $T^*<+\infty$ and none of the conditions (a), (b), and (c) hold in Theorem \ref{thm-weak-CN}, then we show that the solution $(v(t),\psi(t))$ can be continued beyond $T^*$. The key to establishing this is to prove:
\begin{theorem}\label{thm-weak-CN-middle}
Let $T^*$ and $\K(t)$ be as in Theorem \ref{thm-weak-CN}.
Suppose $T^*<+\infty$, and
 there exist constants $M, c_0>0$, such that
\begin{align}
\sup_{t\in [0, T^*)} \K(t) \leq M, \label{assump Kt}\\
\inf_{t\in [0,T^*)}\p_3 \vp(t) \geq c_0, \label{assump c_0}\\
\inf_{t\in [0, T^*)} (b-|\psi(t)|_{\infty}) \geq c_0.\label{assump c_02}
\end{align}
Let
\begin{equation}\label{eq-energy}
  E(t) = \|v(t)\|_3^2+|\sqrt{\sigma}\psi(t)|_4^2.
\end{equation}
Then
\begin{equation} \label{energy est after Gronwall}
E(t) \leq C(c_0^{-1}, M, E(0))\exp\left( \int_0^{t} C(c_0^{-1}, M)(1+\|v(s)\|_{W^{1,\infty}})\d s\right), \quad \forall t\in [0, T^*).
\end{equation}
\end{theorem}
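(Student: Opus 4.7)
The plan is to establish the nonlinear energy inequality \eqref{strategy energy} and then close it by Gronwall. Under \eqref{assump Kt}--\eqref{assump c_02}, every prefactor $P(c_0^{-1},\K_1(t))$ together with $|\vb(t)|_\infty$ and $|\vb(t)|_{W^{1,\infty}}$ is uniformly bounded on $[0,T^*)$, so once \eqref{strategy energy} is in hand the Gronwall inequality with kernel $1+\|v\|_{W^{1,\infty}}$ produces \eqref{energy est after Gronwall} at once. The real work is therefore to show that $\tfrac{d}{dt}(E+\sqrt{E})$ is linear both in $E+\sqrt{E}$ and in $\|v\|_{W^{1,\infty}}$, with multiplicative factors depending only on $c_0^{-1}$ and $\K_1$.

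I would first apply the Hodge estimate \eqref{es-Hodge-1} of Lemma \ref{Lem-Hodge} with $s=3$ to decompose $\|v(t)\|_3^2$ into $\|\pp\cdot v\|_2^2=0$, the curl piece $\|\omp\|_2^2$, and the tangential piece $\|\TP^3 v\|_0^2$, modulo the trivial $\|v\|_0^2$. The curl piece is controlled by $\pp^\alpha$-differentiating the vorticity equation $\Dtp\omp=(\omp\cdot\pp)v$ for $|\alpha|\leq 2$ and testing against $\pp^\alpha\omp$ in $L^2(\Omega,\p_3\vp\,dx)$; the tangentiality of $\Dtp$ to $\p\Omega$ removes boundary terms, and standard commutator bounds yield $\tfrac{d}{dt}\|\omp\|_2^2 \leq P(c_0^{-1},\K_1)(1+\|v\|_{W^{1,\infty}})E$.

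The core of the proof is the tangential estimate for $\|\TP^3 v\|_0^2$, which is also the source of the surface-tension energy $\sigma|\psi|_4^2$. To cancel the uncontrollable top-order contribution $(\pp\pp_3 v)\TP^3\vp$ in $[\TP^3,\pp]v$, I switch to the Alinhac good unknowns $\V := \TP^3 v - (\p_3^\vp v)\TP^3\vp$ and $\Q := \TP^3 q - (\p_3^\vp q)\TP^3\vp$; these satisfy a linearized Euler system in which $\pp\cdot\V$ and $\pp\times\V$ are of lower order and bounded in $L^2$ by $P(c_0^{-1},\K_1)(1+\|v\|_{W^{1,\infty}})\sqrt{E}$, once $\|\TP^3\vp\|_\infty$ is reduced to $|\psi|_{C^3}\subset\K_1$ via \eqref{vp to psi}. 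Testing the $\V$-equation against $\V$ in $L^2(\Omega,\p_3\vp\,dx)$ produces $\tfrac12\tfrac{d}{dt}\|\V\|_0^2$ plus the delicate boundary integral $-\int_{\Gamma_\t}\Q(\V\cdot N)\,dx'$. Substituting \eqref{strategy kinematic BC} and \eqref{strategy Q} and integrating $\TP$ by parts converts this into $-\tfrac{\sigma}{2}\tfrac{d}{dt}|\TP^4\psi|_0^2$ plus controllable errors; the leading error \eqref{strategy error} is dominated by $P(|\psi|_{C^3})|\psi_t|_{C^3}E$ after one further tangential integration by parts, accounting for the presence of $|\psi_t|_{C^3}$ in $\K_1$, while the remaining kinematic errors are controlled by $|\vb|_\infty$ and $|\vb|_{W^{1,\infty}}$, the latter collapsing to $\|v\|_{W^{1,\infty}}$ under the continuity hypothesis on $\TP\vb$.

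The main obstacle is to keep the whole estimate linear in $\|v\|_{W^{1,\infty}}$ despite the appearance of $\|\p q\|_2$ in the contribution $\int_{\Gamma_\t}(\p_3 q)(\TP^3\psi)\,\vb\cdot\TP(\TP^3\psi)\,dx'$ of \eqref{strategy I1}. I would control $\|\p q\|_2$ by the Neumann elliptic problem satisfied by $q$: $-\triangle^\vp q = (\pp v)^T:(\pp v)$ in $\Omega$, with Neumann data $N\cdot\pp q = -(\vb\cdot\TP v)\cdot N - \p_t^2\psi - (\vb\cdot\TP)(v\cdot N)$ on $\Gamma_\t$ and $n\cdot\p q = 0$ on $\Gamma_\b$. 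Standard elliptic regularity in $\Omega$, quantified by $c_0^{-1}$ and $|\psi|_{C^3}$, yields $\|\p q\|_2 \leq P(c_0^{-1},\K_1)(1+\|v\|_{W^{1,\infty}})\sqrt{E}$: the quadratic source contributes the $\|v\|_{W^{1,\infty}}$ factor, and the Neumann datum forces $|\p_t^2\psi|_{1.5}$ into $\K_1$. Pairing this with the $|\vb|_\infty$ already present in the boundary integral preserves the linear-in-$\|v\|_{W^{1,\infty}}$ structure required by \eqref{strategy energy}, and then Gronwall closes the argument to give \eqref{energy est after Gronwall}.
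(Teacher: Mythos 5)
Your proposal is correct and follows essentially the same route as the paper: Hodge div-curl decomposition of $\|v\|_3^2$ via \eqref{es-Hodge-1}, vorticity propagation in $H^2$, Alinhac good unknowns for the tangential estimate with the boundary integral producing $-\tfrac{\sigma}{2}\tfrac{d}{dt}|\TP^4\psi|_0^2$, the Neumann elliptic bound for $\|\p q\|_2$ with $|\psi_{tt}|_{1.5}$ absorbed into $\K_1$, and Gronwall. The only small details you elide are how the zeroth-order pressure mode is pinned down (the paper first bounds $\|\pp q\|_0$ via the Dirichlet problem and a Poincar\'e argument in Lemma \ref{lem q lower est} before closing the Neumann estimate with the Hodge inequality \eqref{es-Hodge-2}) and the intermediate bound on $\|\p v\|_0$ from Lemma \ref{lem-p-v}, which is needed to pass from $\|\TP^3 v\|_0$ to $\|\V\|_0$ via \eqref{ineq-tan-es-f}.
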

\blue{Theorem \ref{thm-weak-CN} is an immediate consequence of Theorem \ref{thm-weak-CN-middle}: Suppose that neither condition (a) nor condition (c) hold in Theorem \ref{thm-weak-CN}, then \eqref{assump Kt}--\eqref{assump c_02} must be true. Apart from this, the violation of condition (b) in Theorem \ref{thm-weak-CN} indicates that $\int_0^{T^*} \|v(t)\|_{W^{1,\infty}}\,dt <+\infty$. Now, we infer from \eqref{energy est after Gronwall} that $E(T^*)<+\infty$, and so $(v(t), \psi(t))$ can be continued beyond $T^*$. This contradicts the definition of $T^*$ as in \eqref{T*}.}


\subsection{$L^2$-Estimates}
The first step to prove Theorem \ref{thm-weak-CN-middle} is to establish the $L^2$-energy estimate for \eqref{eqs-incomp-Euler-fix}. Taking weighted $L^2$ inner products over $\Omega$ by the first equation in \eqref{eqs-incomp-Euler-fix} with $v$ and using the useful identities \eqref{eq-A2} and \eqref{eq-A3}, we obtain
\begin{equation*}
  \begin{aligned}
  &\int_{\Omega} \Dtp v\cdot v\p_3\vp \d x = \frac{1}{2} \ddt \int_{\Omega} |v|^2 \partial_3 \vp \d x,\\
   &\int_{\Omega} \pp q\cdot v\p_3\vp\d x = - \int_{\Omega}q\left(\pp \cdot v \right) \partial_3 \vp \d x + \int_{\Gamma_\t} q v\cdot N \d x'.
  \end{aligned}
\end{equation*}
Thus we get
\begin{equation}\label{L2-es-1}
  \frac{1}{2} \ddt \int_{\Omega} |v|^2 \partial_3 \vp \d x + \int_{\Gamma_\t} q (v\cdot N) \d x' = 0.
\end{equation}
Noting the boundary conditions in \eqref{eqs-incomp-Euler-fix}, we further use integrating $\TP $ by parts to expand the second term above as
\begin{equation*}
  \int_{\Gamma_\t} q (v\cdot N) \d x' = \frac{1}{2}\ddt\int_{\Gamma_\t}\frac{|\sqrt{\sigma}\TP \psi|^2}{|N|} \d x' - \frac{1}{2}\int_{\Gamma_\t}\partial_t\left(|N|^{-1}\right)|\sqrt{\sigma}\TP \psi|^2\d x'.
\end{equation*}
Plugging it into \eqref{L2-es-1}, we obtain
\begin{equation}\label{L2-es-2}
  \frac{1}{2} \ddt \left\{\int_{\Omega} |v|^2 \partial_3 \vp \d x + \int_{\Gamma_\t}\frac{|\sqrt{\sigma}\TP \psi|^2}{|N|} \d x'\right\} = \frac{1}{2}\int_{\Gamma_\t}\partial_t\left(|N|^{-1}\right)|\sqrt{\sigma}\TP \psi|^2\d x'.
\end{equation}
In light of \eqref{assump Kt}-\eqref{assump c_0}, we infer from \eqref{L2-es-2} that, if $t\in [0, T^*)$, then
\begin{align}
\|v(t)\|_0^2+ |\sqrt{\sigma} \TP \psi(t)|_0^2\leq P(c_0^{-1}, |\psi_0|_{C^3})E(0)+\int_0^t P(c_0^{-1}, |\psi(\tau)|_{C^3}, |\psi_t(\tau)|_{C^3})|\sqrt{\sigma} \TP \psi(\tau)|_0^2\d t. \label{v in L2}
\end{align}
\subsection{Div-curl Analysis}
We use Lemma \ref{Lem-Hodge} to treat the full $H^s$-norm of $v$.
Applying \eqref{es-Hodge-1} to $v$ with $s=3$, since $\ppk\cdot v =0$, and denoting $\omega^\vp := \ppk\times v$, we obtain
\begin{equation}\label{es-div-curl-ctrl-v}
  \|v\|_3^2 \leq C(|\psi|_{C^3})\left(\|v\|_0^2+\|\omega^\vp\|_2^2 + \|\overline{\partial}^3 v\|_0^2\right).
\end{equation}
This indicates that we need to control $\|\omega^\vp\|_2$ and $\|\TP^3 v\|_0$.

\subsection{Control of the Vorticity $\|\omp\|_2$} We devote this subsection to bound $\|\omp\|_2$.
\begin{lemma}\label{lem-omega-p-v}
 Let $t\in [0, T^*)$.  Then
  \begin{equation}\label{es-vor-H2}
    \begin{aligned}
  \|\omega^\vp(t)\|_2^2 \leq & P\left(c_0^{-1},|\psi_0|_{C^3}\right)E(0)
   +\int_{0}^{t}P(c_0^{-1}, |\psi(\tau)|_{C^3}, |\psi_t(\tau)|_{C^3})\|v(\tau)\|_{W^{1,\infty}}E(\tau) \d \tau.
    \end{aligned}
  \end{equation}
\end{lemma}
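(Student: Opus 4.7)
The plan is to derive the transported vorticity equation and then run an $H^2$-energy estimate on $\Omega$ weighted by $\p_3\vp$. Applying $\pp\times$ to the momentum equation $\Dtp v + \pp q = 0$, using $\pp\times\pp q = 0$ (which follows from $[\pp_i,\pp_j] = 0$ on scalars), the incompressibility $\pp\cdot v = 0$, and the standard identity $\pp\times((v\cdot\pp)v) = (v\cdot\pp)\omp - (\omp\cdot\pp)v$, one obtains $\Dtp\omp = (\omp\cdot\pp)v$ in $\Omega$. For each multi-index $\alpha$ with $|\alpha|\le 2$ I apply $\p^\alpha$ to both sides and test against $\p^\alpha\omp\,\p_3\vp$ in $L^2(\Omega)$. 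The transport identity $\int_\Omega\Dtp f\cdot f\,\p_3\vp\,dx = \tfrac{1}{2}\ddt\int_\Omega|f|^2\,\p_3\vp\,dx$ already used in \eqref{L2-es-1} (valid because $\Dtp|_{\p\Omega}\in\mathcal{T}(\p\Omega)$ combined with $\pp\cdot v = 0$) converts the left-hand side into the time derivative of $\tfrac{1}{2}\int_\Omega|\p^\alpha\omp|^2\,\p_3\vp\,dx$, so the proof reduces to bounding the stretching term $\p^\alpha((\omp\cdot\pp)v)$ and the commutator $[\p^\alpha,\Dtp]\omp$ in $L^2(\Omega)$.

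For the stretching term, the Moser product estimate gives
\begin{equation*}
\|\p^\alpha((\omp\cdot\pp)v)\|_0 \lesssim \|\omp\|_\infty\|\pp v\|_2 + \|\omp\|_2\|\pp v\|_\infty \lesssim P(c_0^{-1},|\psi|_{C^2})\,\|v\|_{W^{1,\infty}}\sqrt{E(t)},
\end{equation*}
since $\pp_i = \A_i^j\p_j$ with $\A$ controlled by $c_0^{-1}$ and $|\psi|_{C^2}$ via \eqref{vp to psi} and $\|\omp\|_\infty\lesssim\|v\|_{W^{1,\infty}}$; pairing with $\p^\alpha\omp$ then yields the desired integrand $P\,\|v\|_{W^{1,\infty}}E$. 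For the commutator I decompose $\Dtp = \p_t + \vb\cdot\TP + \lambda\p_3$ via \eqref{eq-Dtvp'}, with $\lambda = (v\cdot\N - \p_t\vp)/\p_3\vp$, and expand $[\p^\alpha,\vb\cdot\TP]\omp$ and $[\p^\alpha,\lambda\p_3]\omp$ into pieces of the form $\p^\gamma\vb\cdot\TP\p^{\alpha-\gamma}\omp$ and $\p^\gamma\lambda\cdot\p_3\p^{\alpha-\gamma}\omp$ with $1\le|\gamma|\le 2$. All $\vb$- and $\lambda$-factors reduce, via \eqref{vp to psi} and the trace $\p_t\psi = v\cdot N$, to norms of $v$, $\psi$, $\psi_t$ whose $L^\infty$-portions fit inside $P(c_0^{-1},|\psi|_{C^3},|\psi_t|_{C^3})$ and $\|v\|_{W^{1,\infty}}$. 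Moser-type and Kato-Ponce-type product estimates, together with tangential integration by parts on the torus $\T^2$ (which generates no boundary), then bound each piece in $L^2(\Omega)$ by $P(c_0^{-1},|\psi|_{C^3},|\psi_t|_{C^3})\,\|v\|_{W^{1,\infty}}\,\sqrt{E(t)}$, and integration in time gives \eqref{es-vor-H2}.

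The main technical obstacle is the highest-order commutator piece $\p^\alpha\vb\cdot\TP\omp$ with $|\alpha| = 2$ (and its $\lambda\p_3$-analogue). A direct H\"older estimate with $\p^2\vb,\TP\omp\in L^4(\Omega)$ (both $\lesssim\sqrt{E(t)}$ via $H^{3/4}(\Omega)\hookrightarrow L^4(\Omega)$ in 3D) controls it only by $E(t)$ in $L^2$, so pairing with $\p^\alpha\omp$ would produce a supercritical $E(t)^{3/2}$ contribution incompatible with the linear Gr\"onwall step leading to \eqref{energy est after Gronwall}. My plan to extract the claimed linear dependence on $\|v\|_{W^{1,\infty}}$ is to perform a tangential integration by parts (no boundary on $\T^2$), using the divergence-free structure $\pp\cdot\omp = 0$ to recast any resulting normal $\p_3\omp$-term as tangential derivatives modulo lower-order $\vp$-terms, leaving a single $L^\infty$-factor of size $\|\nabla v\|_\infty\le\|v\|_{W^{1,\infty}}$. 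Making this reduction work while staying within the regularity class $\omp\in H^2(\Omega)$ is the one genuinely delicate point; the rest is a standard transport-energy computation.
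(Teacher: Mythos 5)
Your overall route coincides with the paper's: derive the transported vorticity equation $\Dtp\omp=\omp\cdot\pp v$ by taking $\pp\times$ of the momentum equation and using $[\pp_i,\pp_j]=0$ together with $\pp\cdot v=0$; then run an $H^2$-estimate by applying $\p^\alpha$ with $|\alpha|\leq 2$, testing against $\p^\alpha\omp\,\p_3\vp$, invoking the weighted transport identity (Theorem~\ref{thm-A3}), and bounding the stretching and commutator pieces. The paper's proof does exactly this, so up to the last paragraph there is no genuine difference.

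The genuine problem is the final paragraph. You correctly identify the subtle term, namely the top-order commutator piece $\p^\alpha\vb\cdot\TP\omp$ with $|\alpha|=2$: a naive $L^4\times L^4$ H\"older estimate with $H^{3/4}\hookrightarrow L^4$ gives $E(t)$ in $L^2(\Omega)$, hence $E^{3/2}$ after pairing. But your proposed fix is both unnecessary and incomplete. \emph{It is incomplete} because the tangential integration by parts that you propose requires $\p^\alpha$ to carry at least one tangential factor; for $\p^\alpha=\p_3^2$ there is nothing to integrate by parts, and integrating $\p_3$ by parts produces boundary terms on $\Gamma_\t$ and $\Gamma_\b$. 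Your remark that $\pp\cdot\omp=0$ lets you trade normal for tangential derivatives only handles the single scalar relation $\pp_3\omp^3=-\pp_1\omp^1-\pp_2\omp^2$; the components $\p_3\omp^1$ and $\p_3\omp^2$ are not recovered this way. Furthermore, the first integration by parts pushes a derivative onto $\p^\alpha\omp$, producing a third-order derivative of $\omp$ that is not in $L^2$; while a second integration by parts and a cancellation does rescue the purely tangential case, the entire scheme collapses in the $\p_3^2$ case. \emph{It is unnecessary} because the sharp, standard tool here is Gagliardo--Nirenberg interpolation (the same tool underlying the tame Moser estimate \eqref{Prod-Es} and commutator estimate \eqref{commu-es} in Lemma~\ref{lem-B1}): in 3D one has $\|\p^2 u\|_{L^4}\lesssim\|\nabla u\|_{L^\infty}^{1/2}\|u\|_{H^3}^{1/2}+\|u\|_{L^2}$, and since $\TP\omp$ is, up to controlled lower-order $\vp$-coefficients, a second derivative of $v$, this yields directly $\|\p^\alpha\vb\cdot\TP\omp\|_0\lesssim P(c_0^{-1},|\psi|_{C^3})\|v\|_{W^{1,\infty}}\|v\|_3$, pointwise in $t$, which is exactly the linear-in-$\|v\|_{W^{1,\infty}}$ estimate the lemma requires and is what the paper invokes under the label of ``standard Sobolev inequalities in Lemma~\ref{lem-B1}.'' Replacing your final paragraph with this interpolation step would give a complete proof; as written, the argument has a gap.
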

\begin{proof}
  By taking $\ppk\times$ to the first equation in \eqref{eqs-incomp-Euler-fix}, we obtain:
\begin{equation}\label{eq-vor}
  \Dtp\omega^\vp  = \omega^\vp\cdot\ppk v.
\end{equation}
Then, we apply $\p^\gamma = \p_1^{\gamma_1}\p_2^{\gamma_2}\p_3^{\gamma_3}$ with $|\gamma| \leq 2$ to \eqref{eq-vor} to acquire:
\begin{equation}\label{eq-vor1}
  D_t^\vp (\p^\gamma\omega^\vp) = -\left[\p^\gamma,\,D_t^\vp\right]\omega^\vp + \p^\gamma\left(\omega^\vp\cdot\ppk v\right).
\end{equation}
By the virtue of \eqref{eq-Dtvp'},
we have
\begin{equation}
  \begin{aligned}
  [\p^\gamma, \Dtp] \omega^\vp =
 -\p_3^\vp \omega^\vp D_t^\vp\p^\gamma\vp + \mathcal{R}(\omega^\vp),
  \end{aligned}
\end{equation}
where for $|\alpha'|=1$ with $\alpha'_j\leq \gamma_j$, $j=1,2,3$,
\begin{equation}
  \begin{aligned}
  \mathcal{R}(\omega^\vp) = & \left[\p^\gamma,\,\overline{v}\right]\cdot \TP \omega^\vp + \p_3^\vp \omega^\vp\left[\p^\gamma,v\right]\cdot \N + \left[\p^\gamma,\frac{1}{\p_3 \vp}(v\cdot \N-\p_t\vp),\p_3 \omega^\vp\right]\\
  &\quad + \left[\p^\gamma,v\cdot \N-\p_t\vp, \frac{1}{\p_3 \vp}\right]\p_3 \omega^\vp - (v\cdot \N-\p_t\vp)\p_3 \omega^\vp \left[\p^{\gamma-\alpha'},\frac{1}{(\p_3\vp)^2}\right]\TP^{\alpha'}\p_3\vp,
  \end{aligned}
\end{equation}
By the virtue of standard Sobolev inequalities in Lemma \ref{lem-B1}, we conclude that
\begin{equation}
\|\text{RHS of \eqref{eq-vor1}}\|_{0}\leq
   P\left(c_0^{-1},|\psi|_{C^3}, |\psi_t|_{C^3}\right)\|v\|_3 \|v\|_{W^{1,\infty}}.
\end{equation}

Invoking \eqref{eq-A3}, and  testing \eqref{eq-vor1} with $\p^\gamma\omega^\vp$, we get
\begin{equation}\label{es-vor-H2-1}
  \begin{aligned}
  \frac{1}{2}\ddt \mathcal{E}(t)^2 \leq P\left(c_0^{-1},|\psi|_{C^3},|\psi_t|_{C^3}\right)\|v\|_3 \|v\|_{W^{1,\infty}}\mathcal{E}(t).
  \end{aligned}
\end{equation}
where
\begin{align*}
\mathcal{E}(t)^2 = \int_{\Omega}|\p^\gamma\omega^\vp|^2\p_3\vp\d x.
\end{align*}
Since
$$
\mathcal{E}(t) = \left (\int_{\Omega} |\p^\gamma \omp|^2 \p_3\vp \d x \right)^{\frac{1}{2}}\leq P(|\psi|_{C^3}) \|v\|_3,
$$
and thus
\begin{align*}
\|v\|_3 \mathcal{E}(t) \leq P(|\psi|_{C^3})E(t).
\end{align*}
Then
\eqref{es-vor-H2-1} yields
\begin{align}
\mathcal{E}(t)^2  \leq \mathcal{E}(0)^2 +\int_0^t P\left(c_0^{-1},|\psi(\tau)|_{C^3},|\psi_t(\tau)|_{C^3}\right) \|v(\tau)\|_{W^{1,\infty}} E(\tau)\d \tau,
\end{align}
which leads to \eqref{es-vor-H2} as $\p_3\vp\geq c_0>0$.
\end{proof}
\subsection{Control of the $L^2$-norm of $\p v$} We bound $\|\p v\|_0$ first before treating $\|\TP^3 v\|_0$ appeared on the RHS of \eqref{es-div-curl-ctrl-v}. This quantity plays an important role while studying the bound for $\|\TP^3 v\|_0$ through Alinhac good unknowns.
\begin{lemma}\label{lem-p-v} Let $t\in [0, T^*)$. Then
\begin{equation}\label{es-p-v-L2}
    \begin{aligned}
    \|\p v(t)\|_0^2 \leq &P\left(c_0^{-1},|\psi_0|_{C^3}\right)E(0)+
     \int_{0}^{t}P(c_0^{-1}, |\psi(\tau)|_{C^3}, |\psi_t(\tau)|_{C^3})\left((1 + \|v\|_{W^{1,\infty}})E(\tau) + \|\p q\|_1\sqrt{E(\tau)}\right)\d \tau.
    \end{aligned}
  \end{equation}
\end{lemma}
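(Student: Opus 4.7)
The plan is to bound $\|\p v(t)\|_0^2$ by reducing it first to a tangential quantity via the Hodge-type estimate of Lemma \ref{Lem-Hodge} and then running a first-order tangential energy estimate. Applying \eqref{es-Hodge-1} with $s=1$, together with $\pp\cdot v=0$ and $\pp\times v=\omp$, yields
\begin{equation*}
\|\p v\|_0^2\leq\|v\|_1^2\leq C(|\psi|_{C^1})\left(\|\omp\|_0^2+\|\TP v\|_0^2+\|v\|_0^2\right).
\end{equation*}
The $\|v\|_0^2$ term is already controlled via the $L^2$-estimate \eqref{v in L2}, and $\|\omp\|_0^2\leq\|\omp\|_2^2$ is controlled by Lemma \ref{lem-omega-p-v}. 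What remains is to bound $\|\TP v(t)\|_0^2$.

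To this end, I would apply $\TP_a$ ($a=1,2$) to the momentum equation in \eqref{eqs-incomp-Euler-fix} to obtain
\begin{equation*}
\Dtp(\TP_a v)+\pp(\TP_a q)=-[\TP_a,\Dtp]v-[\TP_a,\pp]q,
\end{equation*}
and test against $\TP_a v$ with weight $\p_3\vp\,\d x$ on $\Omega$. The material-derivative term produces $\tfrac{1}{2}\ddt\int_\Omega|\TP_a v|^2\p_3\vp\,\d x$ by the identities \eqref{eq-A2}--\eqref{eq-A3}. Integrating the pressure term by parts,
\begin{equation*}
\int_\Omega\pp(\TP_a q)\cdot(\TP_a v)\p_3\vp\,\d x=-\int_\Omega\TP_a q\,(\pp\cdot\TP_a v)\p_3\vp\,\d x+\int_{\Gamma_\t}\TP_a q\,(\TP_a v\cdot N)\,\d x',
\end{equation*}
where the boundary contribution on $\Gamma_\b$ vanishes since $v_3=0$ there implies $\TP_a v_3=0$. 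The commutator $\pp\cdot\TP_a v=[\pp,\TP_a]v$ is bounded by $P(|\psi|_{C^2})\|\p v\|_0$, and combined with $\|\TP_a q\|_0\leq\|\p q\|_1$ this interior piece produces precisely the $\|\p q\|_1\sqrt{E}$ contribution appearing in \eqref{es-p-v-L2}. The commutators $[\TP_a,\Dtp]v$ and $[\TP_a,\pp]q$ are standard, bounded by $P(c_0^{-1},|\psi|_{C^3},|\psi_t|_{C^3})(1+\|v\|_{W^{1,\infty}})\|\p v\|_0$ and $P(c_0^{-1},|\psi|_{C^3})\|\p q\|_0$, respectively.

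The main obstacle is the boundary integral on $\Gamma_\t$. Using the surface-tension boundary condition $q=-\sigma\TP\cdot(\TP\psi/|N|)$, I would write $\TP_a q=-\sigma\TP\cdot\TP_a(\TP\psi/|N|)$ and integrate the outer $\TP\cdot$ tangentially by parts on $\Gamma_\t$; using the kinematic boundary condition $\p_t\psi=v\cdot N$ to rewrite $\TP_a v\cdot N=\p_t\TP_a\psi+\vb\cdot\TP(\TP_a\psi)-v\cdot\TP_a N$, the leading-order part takes the form
\begin{equation*}
\sigma\int_{\Gamma_\t}\frac{\TP_a\TP\psi}{|N|}\cdot\p_t\TP_a\TP\psi\,\d x'=\frac{\sigma}{2}\ddt\int_{\Gamma_\t}\frac{|\TP_a\TP\psi|^2}{|N|}\d x'-\frac{\sigma}{2}\int_{\Gamma_\t}\p_t\!\left(|N|^{-1}\right)|\TP_a\TP\psi|^2\d x',
\end{equation*}
which contributes a positive surface energy of size $\sigma|\TP^2\psi|_0^2\leq|\sqrt{\sigma}\psi|_4^2\leq E$ to the left-hand side, plus a $\p_t(1/|N|)$-error controlled by $P(|\psi|_{C^3},|\psi_t|_{C^3})E$. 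All remaining boundary pieces involve at most three tangential derivatives of $\psi$ and at most one of $v$ on $\Gamma_\t$, and are bounded by $P(|\psi|_{C^3},|\psi_t|_{C^3})(1+\|v\|_{W^{1,\infty}})E$.

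Collecting these contributions gives the differential inequality
\begin{equation*}
\ddt\left(\|\TP v\|_0^2+\sigma|\TP^2\psi|_0^2\right)\leq P(c_0^{-1},|\psi|_{C^3},|\psi_t|_{C^3})\left((1+\|v\|_{W^{1,\infty}})E+\|\p q\|_1\sqrt{E}\right).
\end{equation*}
Integrating in time, adding the already controlled pieces $\|\omp\|_0^2$ and $\|v\|_0^2$, and using $\p_3\vp\geq c_0$ to absorb the weight, I recover \eqref{es-p-v-L2}. The delicate point throughout will be ensuring that no term picks up a superfluous factor of $\|v\|_{W^{1,\infty}}$ multiplying $\|\p q\|_1\sqrt{E}$, which forces the separate treatment (through tangential integration by parts and the surface-tension energy) of the $\Gamma_\t$-boundary term rather than any crude $L^\infty$ absorption of $\TP_a q$ against $\TP_a v$.
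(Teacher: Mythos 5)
Your argument is correct but takes a genuinely different, and considerably heavier, route than the paper. The paper's proof differentiates the momentum equation by a full (not necessarily tangential) spatial derivative $\p$, tests against $\p v$ with weight $\p_3\vp$, and --- the key simplification --- does \emph{not} integrate the pressure term by parts: the contribution $-\int_\Omega(\p\pp_i q)(\p v^i)\p_3\vp\,\d x$ is bounded directly by Cauchy--Schwarz as $P(c_0^{-1},|\psi|_{C^3})\|\p q\|_1\|\p v\|_0\le P(\cdots)\|\p q\|_1\sqrt{E}$, so no boundary integral ever appears and neither boundary condition is invoked; the remaining transport commutator gives the $(1+\|v\|_{W^{1,\infty}})E$ piece. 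You instead first reduce $\|\p v\|_0$ to $\|\omp\|_0+\|\TP v\|_0+\|v\|_0$ via the Hodge decomposition and then run a first-order tangential energy estimate with the pressure term integrated by parts, which resurrects a boundary integral on $\Gamma_\t$ that you must process through the surface-tension identity, the kinematic condition, and iterated tangential integrations by parts --- essentially a miniature version of the far harder good-unknown estimate for $\V,\Q$. This works, and it buys nothing here: the boundary terms you produce (after the necessary symmetrization of the $\vb\cdot\TP^2\TP_a\psi$ piece) carry a factor $|\TP\vb|_\infty$ on $\Gamma_\t$, so strictly speaking your bound is in terms of $|\vb|_{W^{1,\infty}}$ rather than $\|v\|_{W^{1,\infty}}$ (coinciding only when $\vb,\TP\vb$ are continuous up to the boundary, as discussed around Lemma~\ref{thm-Linfty}); and the time-$t$ Hodge constant $C(|\psi(t)|_{C^1})$ ends up multiplying $E(0)$, a cosmetic weakening of \eqref{es-p-v-L2} that must then be absorbed using $|\psi(t)|_{C^3}\le M$. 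Also, your formula $\TP_a v\cdot N=\p_t\TP_a\psi+\vb\cdot\TP(\TP_a\psi)-v\cdot\TP_a N$ double-counts: $-v\cdot\TP_a N$ \emph{equals} $\vb\cdot\TP(\TP_a\psi)$, so the correct identity is $\TP_a v\cdot N=\p_t\TP_a\psi+\vb\cdot\TP(\TP_a\psi)$. The observation worth internalizing is that at this low order the pressure can be kept in the interior, which collapses the whole boundary analysis.
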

\begin{proof}
Differentiating the first equation in \eqref{eqs-incomp-Euler-fix} in space and then testing with $\p v$, we then infer from \eqref{eq-A3} that
\begin{equation*}
  \begin{aligned}
  \frac{1}{2}\ddt\int_{\Omega}|\p v|^2\p_3\vp \d x = -\int_{\Omega}\p\left((\overline{v}\cdot\TP v^i) + (v\cdot \N-\p_t\vp)\p_3^\vp v^i\right)(\p v_i)\p_3\vp\d x - \int_{\Omega}(\p \pp_i q) (\p v^i)\p_3\vp\d x.
  \end{aligned}
\end{equation*}
The terms on the RHS can be controlled straightforwardly, which leads to, after integrating in time, that
\begin{equation*}
  \begin{aligned}
  \|\p v(t)\|_0^2 &\leq P\left(c_0^{-1},|\psi_0|_{C^3}\right)\|\p v(0)\|_0^2
  \\&+\int_{0}^{t}P(c_0^{-1}, |\psi(\tau)|_{C^3}, |\psi_t(\tau)|_{C^3})\Big((1 + \|v\|_{W^{1,\infty}})\|v(\tau)\|_3^2 + \|\p q(\tau)\|_1\|v(\tau)\|_3\Big)\d\tau.
  \end{aligned}
\end{equation*}
\end{proof}
\subsection{Tangential Energy Estimates with Full Spatial Derivatives}
 We commute $\TP^\alpha$ with \eqref{eqs-incomp-Euler-fix}, where $\alpha = (\alpha_1,\alpha_2)$ with $|\alpha| = 3$ and
\begin{equation}\label{eq-p-alpha}
  \TP^\alpha = \TP_1^{\alpha_1}\TP_2^{\alpha_2}.
\end{equation}

Let $f=f(t,x)$ be a generic smooth function.  For $i=1,2,3$, we have
\begin{equation}\label{eq-tan-cmut-ppk}
  \TP^\alpha\ppk_i f = \ppk_i \left(\TP^\alpha f - \ppk_3 f\TP^\alpha\vp\right) + {\underbrace{R_i^1(f) + \ppk_3\ppk_i f\TP^\alpha\vp}_{R_i^2(f)}},
\end{equation}
with $R^1(f) = (R_1^1,R_2^1,R_3^1)(f)$, and
\begin{equation}\label{eq-R1-f}
  \begin{aligned}
  R_j^1(f) = & -[\TP^\alpha,\frac{\p_j\vp}{\p_3\vp},\p_3 f] - \p_3 f[\TP^\alpha,\p_j\vp,\frac{1}{\p_3\vp}] + \p_3 f \p_j\vp [\TP^{\alpha-\alpha'},\frac{1}{(\p_3\vp)^2}]\TP^{\alpha'}\p_3\vp,\;j=1,2,\\
  R_3^1(f) = & [\TP^\alpha,\frac{1}{\p_3\vp},\p_3 f] - \p_3 f [\TP^{\alpha-\alpha'},\frac{1}{(\p_3\vp)^2}]\TP^{\alpha'}\p_3\vp,
  \end{aligned}
\end{equation}
where $|\alpha'|=1$.

We define
\begin{align}\label{def AGU}
\mathbf{F} := \TP^\alpha f - \ppk_3 f\TP^\alpha\vp
\end{align}
 to be the Alinhac's good unknown associated with $f$, which was first introduced by Alinhac \cite{Alinhac-1989CPDE}.
 Furthermore, recall that we can express $\Dtp$ as
\begin{align}\label{eq-Dtvp}
\Dtp = \p_t + \vb\cdot \TP + \frac{1}{\p_3\vp}(v\cdot \N-\p_t\vp)\p_3, \quad \N = (-\p_1\vp, -\p_2\vp, 1)^T.
\end{align}
Note that $v\cdot\TP^\alpha\N = -(\overline{v}\cdot \TP )\TP^\alpha\vp$, we use \eqref{eq-tan-cmut-ppk} and \eqref{eq-Dtvp} to cast $\TP^\alpha D_t^\vp f$ into
\begin{equation}\label{eq-tan-cmut-Dtvp}
  \begin{aligned}
  \TP^\alpha D_t^\vp f =
   D_t^\vp(\TP^\alpha f - \p_3^\vp f\TP^\alpha \vp) + \underbrace{D_t^\vp\p_3^\vp f\TP^\alpha\vp + {\widetilde{R^3}}(f)}_{=:{R^3}(f)},
  \end{aligned}
\end{equation}
where
\begin{equation}\label{eq-R3-f}
  \begin{aligned}
  \widetilde{R^3}(f) = & \left[\TP^\alpha,\,\overline{v}\right]\cdot \TP f + \p_3^\vp f\left[\TP^\alpha,v\right]\cdot \N + \left[\TP^\alpha,\frac{1}{\p_3 \vp}(v\cdot \N-\p_t\vp),\p_3 f\right]\\
  &\quad + \left[\TP^\alpha,v\cdot \N-\p_t\vp, \frac{1}{\p_3 \vp}\right]\p_3 f - (v\cdot \N-\p_t\vp)\p_3 f \left[\TP^{\alpha-\alpha'},\frac{1}{(\p_3\vp)^2}\right]\TP^{\alpha'}\p_3\vp.
  \end{aligned}
\end{equation}

Now, we define Alinhac's good unknowns
\begin{equation}\label{eq-VQ}
  \V = \TP^\alpha v -\p_3^\vp v \TP^\alpha\vp,\qquad \Q = \TP^\alpha q -\p_3^\vp q \TP^\alpha\vp,
\end{equation}
for $v$ and $q$, respectively.  Thanks to the inequality
\begin{equation}\label{ineq-tan-es-f}
    \|\TP^\alpha v\|_0 \leq \|\V\|_0 + \|\ppk_3 v\TP^\alpha\vp\|_0,
  \end{equation}
  it is not hard to see that we can control $\|\TP^\alpha v\|_0$ through $\|\V\|_0$, while the second term on the RHS of \eqref{ineq-tan-es-f} can be treated by employing Lemma \ref{lem-p-v}.
Taking $\TP^\alpha$ to the system \eqref{eqs-incomp-Euler-fix}, and invoking \eqref{eq-tan-cmut-ppk} and \eqref{eq-tan-cmut-Dtvp}, we obtain the follow system of equations governed by $(\psi, \V, \Q)$:
\begin{equation}\label{eqs-AGU}
  \left\{\begin{aligned}
  &D_t^\vp \V + \ppk \Q =  -R^3(v) - R^2(q),\quad &\text{in}\;\Omega,\\
  &\ppk\cdot \V =  -R_i^2(v^i),\quad &\text{in}\; \Omega,\\
  &\Q =  -\sigma\TP^\alpha\TP\cdot \left(\tfrac{\TP\psi}{\sqrt{1+|\TP\psi|^2}}\right) - \p_3 q\TP^\alpha\psi, \quad&\text{on}\; \Gamma_\t,\\
 & \p_t\TP^\alpha\psi  + \overline{v}\cdot\TP\left(\TP^\alpha\psi\right) - \V\cdot N = \mathcal{S},&\text{on}\; \Gamma_\t,\\
 & \V\cdot n =  0,\quad &\text{on}\; \Gamma_\b,
  \end{aligned}\right.
\end{equation}
where
\begin{equation}\label{eq-S}
  \begin{aligned}
  \mathcal{S} := & \underbrace{(\p_3 v \cdot N)\TP^\alpha\psi + \sum_{\substack{\beta'+\beta^{\prime\prime}= \alpha \\ |\beta'|=1,|\beta^{\prime\prime}|=2}} C_{\alpha}^{\beta'}\left(\TP^{\beta'} v\right)\cdot\left(\TP^{\beta^{\prime\prime}}N\right)}_{=:\mathcal{S}_1} + \sum_{\substack{\beta'+\beta^{\prime\prime}= \alpha \\ |\beta'|=2,|\beta^{\prime\prime}|=1}} C_{\alpha}^{\beta'}\left(\TP^{\beta'} v\right)\cdot\left(\TP^{\beta^{\prime\prime}}N\right)
  \end{aligned}
\end{equation}
\begin{remark}
The boundary condition
\begin{equation}\label{high kinemat cond}
\V\cdot N= \p_t\TP^\alpha\psi  + \overline{v}\cdot\TP\left(\TP^\alpha\psi\right) -\mathcal{S}
\end{equation}
is known to be the ``higher-order kinematic boundary condition" verified by $\V\cdot N$ on $\Gamma_{\t}$.
\end{remark}

The remaining of this subsection is devoted to showing:
\begin{theorem} \label{thm-tangential}
  Let $\V$ be defined as \eqref{eq-VQ} with $|\alpha|=3$. Let $t\in [0, T^*)$. Then
  \blue{\begin{equation}\label{es-AGU-V-2}
  \begin{aligned}
  &\|\V(t)\|_0^2 + |\sqrt{\sigma}\TP^\alpha\,\TP\psi(t)|^2 \\
  & \leq P\left(c_0^{-1},|\psi_0|_{C^3}\right)\left(\|\V(0)\|_0^2 + |\sqrt{\sigma}\TP^\alpha\,\TP\psi(0)|^2\right)\\
  &+ \int_{0}^{t}P(c_0^{-1}, |\psi(\tau)|_{C^3}, |\psi_t(\tau)|_{C^3})\left(\left(1 + |\vb|_{W^{1,\infty}}\right)\left(E(\tau) + \sqrt{E(\tau)}\right) +(1+|\vb|_{\infty}) \|\partial q\|_2\sqrt{E(\tau)}\right)\d \tau\\
  & + \int_{0}^{t}P(c_0^{-1}, |\psi(\tau)|_{C^3}, |\psi_t(\tau)|_{C^3})\,\Big(\|\partial q\|_2 + (1 + \|v\|_{W^{1,\infty}})\sqrt{E(\tau)}\Big)\|\V(\tau)\|_0\d \tau.
  \end{aligned}
\end{equation}}
\blue{Moreover, if $\vb(t)$, $\p_1 \vb(t)$, and $\p_2 \vb(t)$ are continuous on $\Omega$, then
the third line in \eqref{es-AGU-V-2} can be replaced by
\begin{equation}
\int_{0}^{t}P(c_0^{-1}, |\psi(\tau)|_{C^3}, |\psi_t(\tau)|_{C^3})\,\left(\left(1 + \|v\|_{W^{1,\infty}}\right)\Big(E(\tau) + \sqrt{E(\tau)}\Big)+ (1+|\vb|_{\infty})\|\partial q\|_2\sqrt{E(\tau)}\right)\d \tau.
\end{equation}}
\end{theorem}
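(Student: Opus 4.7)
The plan is to test the first equation of \eqref{eqs-AGU} in $L^2(\Omega)$ against $\V$ with weight $\p_3\vp\,\d x$. By the identity \eqref{eq-A3}, the material derivative term produces $\frac{1}{2}\ddt \int_\Omega |\V|^2\p_3\vp\,\d x$, and integration by parts on the pressure term, using $\V\cdot n=0$ on $\Gamma_\b$, yields
\begin{equation*}
\int_\Omega \ppk \Q \cdot \V\,\p_3\vp\,\d x = -\int_\Omega \Q(\ppk\cdot \V)\,\p_3\vp\,\d x + \int_{\Gamma_\t}\Q(\V\cdot N)\,\d x'.
\end{equation*}
Substituting $\ppk\cdot\V=-R_i^2(v^i)$ from \eqref{eqs-AGU} and estimating the source terms $R^2(q)$, $R^3(v)$ by Lemma \ref{lem-B1} together with standard Sobolev embeddings, all interior contributions are absorbed into the third line of \eqref{es-AGU-V-2}, i.e.~into $P(c_0^{-1},|\psi|_{C^3},|\psi_t|_{C^3})\bigl(\|\p q\|_2+(1+\|v\|_{W^{1,\infty}})\sqrt{E(t)}\bigr)\|\V\|_0$.

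The crux is the boundary integral $\int_{\Gamma_\t}\Q(\V\cdot N)\,\d x'$. Substituting the third and fourth equations of \eqref{eqs-AGU} produces four groups of terms. The principal one,
\begin{equation*}
\sigma\int_{\Gamma_\t}\TP^\alpha\Big(\TP\cdot\tfrac{\TP\psi}{|N|}\Big)\p_t\TP^\alpha\psi\,\d x',
\end{equation*}
is processed by tangential integration by parts; isolating the leading piece $|N|^{-1}\TP^\alpha\TP\psi$ of $\TP^\alpha(\TP\psi/|N|)$ produces the energy-generating contribution $-\frac{1}{2}\ddt\int_{\Gamma_\t}|N|^{-1}|\sqrt{\sigma}\TP^\alpha\TP\psi|^2\,\d x'$, while the leading commutator is treated by integrating one further tangential derivative onto $\p_t\TP^3\psi$, at which point $|\TP^3\p_t\psi|_\infty\lesssim |\psi_t|_{C^3}\in\K_1(t)$ closes the bound. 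The surface-tension-driven term $\sigma\int_{\Gamma_\t}\TP^\alpha\TP\cdot(\TP\psi/|N|)\,\vb\cdot\TP(\TP^\alpha\psi)\,\d x'$ gives, after two tangential integrations by parts, a leading term $\tfrac{1}{2}\int_{\Gamma_\t}(\TP\cdot\vb)|N|^{-1}|\sqrt{\sigma}\TP^\alpha\TP\psi|^2\,\d x'$ that accounts for the $|\vb|_{W^{1,\infty}}$ factor on the first line of \eqref{es-AGU-V-2}. The two pressure-driven terms $-\int_{\Gamma_\t}\p_3 q\,\TP^\alpha\psi\,\p_t\TP^\alpha\psi\,\d x'$ and $-\int_{\Gamma_\t}\p_3 q\,\TP^\alpha\psi\,(\vb\cdot\TP\TP^\alpha\psi)\,\d x'$ are bounded, after one more tangential integration by parts in the first, by $|\psi_t|_{C^2}\|\p q\|_2\sqrt{E(t)}$ and $|\vb|_\infty\|\p q\|_2\sqrt{E(t)}$ respectively, producing the $(1+|\vb|_\infty)\|\p q\|_2\sqrt{E(t)}$ factor; lower-order pieces coming from $\mathcal{S}$ and from the commutator $\TP^\alpha(\TP\psi/|N|) - |N|^{-1}\TP^\alpha\TP\psi$ are absorbed into $P(|\psi|_{C^3},|\psi_t|_{C^3})(E(t)+\sqrt{E(t)})$.

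The main obstacle is preserving the linear-in-$\|v\|_{W^{1,\infty}}$ structure required to close the Gr\"onwall argument behind \eqref{energy est after Gronwall}, while carefully separating the $|\vb|_\infty$ factor that pairs with $\|\p q\|_2$: since the elliptic estimate for $\|\p q\|_2$ will itself carry a factor of $\|v\|_{W^{1,\infty}}$, any failure to isolate $|\vb|_\infty$ at this stage would generate a forbidden quadratic product $\|v\|_{W^{1,\infty}}^2$ after substitution. Integrating the resulting differential inequality over $[0,t]$ yields \eqref{es-AGU-V-2}. Finally, the alternative form of the estimate follows because, under the continuity hypothesis on $\vb,\p_1\vb,\p_2\vb$, Lemma \ref{thm-Linfty} gives $|\TP\vb|_\infty \leq \|\TP\vb\|_\infty \leq \|v\|_{W^{1,\infty}}$, so the $|\vb|_{W^{1,\infty}}$ in the first line of \eqref{es-AGU-V-2} may be replaced by $\|v\|_{W^{1,\infty}}$, which in turn removes the need to include $\vbi$ in $\K_2(t)$.
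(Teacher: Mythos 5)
Your overall architecture matches the paper's argument closely: you test the higher-order Euler equation from \eqref{eqs-AGU} against $\V$ weighted by $\p_3\vp$, invoke \eqref{eq-A3} to produce $\frac{1}{2}\ddt\int_\Omega |\V|^2\p_3\vp\,\d x$, integrate the pressure term by parts to isolate the boundary integral $-\int_{\Gamma_\t}\Q(\V\cdot N)\,\d x'$, handle the interior remainders $R^2, R^3$ crudely, and you correctly identify why the $|\vb|_\infty$ factor must be kept separate from $\|v\|_{W^{1,\infty}}$ in the piece that pairs with $\|\p q\|_2$. The deduction of the alternative estimate from Lemma \ref{thm-Linfty} is also the paper's. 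The one place where your outline has a genuine gap is the energy extraction from the surface-tension term.

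You isolate only $|N|^{-1}\TP^\alpha\TP\psi$ as the top-order piece of $\TP^\alpha(\TP\psi/|N|)$, declare the remainder $\TP^\alpha(\TP\psi/|N|) - |N|^{-1}\TP^\alpha\TP\psi$ a ``commutator,'' and claim it is closed by integrating one further tangential derivative onto $\p_t\TP^3\psi$ and invoking $|\psi_t|_{C^3}$. But, as in the paper's \eqref{TP-alpha-TP-psi}, the top-order part of $\TP^\alpha(\TP\psi/|N|)$ has \emph{two} pieces:
\begin{equation*}
\frac{\TP^\alpha\TP\psi}{|N|} \;-\; \frac{\left(\TP\psi\cdot\TP^\alpha\TP\psi\right)\TP\psi}{|N|^3},
\end{equation*}
and the second one also carries four derivatives on $\psi$. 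If you try to treat the integral $\sigma\int_{\Gamma_\t}\frac{(\TP\psi\cdot\TP^\alpha\TP\psi)\TP\psi}{|N|^3}\cdot\p_t\TP^\alpha\TP\psi\,\d x'$ by integrating a tangential derivative off $\p_t\TP^\alpha\TP\psi$, the derivative lands on the coefficient and produces $\TP^\alpha\TP^2\psi$, i.e.\ five derivatives on $\psi$, which is not controlled by $E$. Equally, bounding it directly requires $|\p_t\TP^\alpha\TP\psi|_0$, which is not controlled by $|\psi_t|_{C^3}$ or $E$. The correct treatment --- and the one the paper uses in $ST_1$, \eqref{eq-ST1} --- is to observe that this second piece is of the form $A_{ij}\,\TP^\alpha\TP_i\psi\,\p_t\TP^\alpha\TP_j\psi$ with a symmetric coefficient $A_{ij}=\TP_i\psi\TP_j\psi/|N|^3$, so it can be pulled under a time derivative, contributing $-\frac{1}{2}\ddt\int_{\Gamma_\t}|N|^{-3}|\sqrt{\sigma}\TP\psi\cdot\TP^\alpha\TP\psi|^2\,\d x'$ to the energy plus a lower-order term proportional to $|\psi_t|_{C^1}E$. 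Only the genuine commutators $[\TP^{\alpha-\alpha'},1/|N|]$ etc.\ are handled by moving one tangential derivative and invoking $|\psi_t|_{C^3}$. After including both pieces in the energy, coercivity still holds because
\begin{equation*}
\frac{|\TP^\alpha\TP\psi|^2}{(1+|\TP\psi|^2)^{1/2}} - \frac{|\TP\psi\cdot\TP^\alpha\TP\psi|^2}{(1+|\TP\psi|^2)^{3/2}} \;\geq\; \frac{|\TP^\alpha\TP\psi|^2}{(1+|\TP\psi|^2)^{3/2}},
\end{equation*}
which is the inequality used to pass from \eqref{es-AGU-V-1} to \eqref{es-AGU-V-2}. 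Apart from this omission, your sketch is correct.
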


\subsubsection{Proof of Theorem \ref{thm-tangential}}
We first state some preliminary results that are employed in the proof of Theorem \ref{thm-tangential}. Invoking the definition of $\vp$ in \eqref{eq-phi}, we have
\begin{equation}\label{eq-useful-1}
 \p_3\vp\big|_{\Gamma_{\t}} = 1, \quad \p_3^\vp\big|_{\Gamma_{\t}} = \p_3, \quad  \TP^\alpha \vp\big|_{\Gamma_\t} = \TP^\alpha \psi,\quad \TP^\alpha \vp\big|_{\Gamma_\b} = 0.
\end{equation}
  Testing the first equation in \eqref{eqs-AGU} with $\V$, we obtain:
  \begin{equation}\label{eq-AGU-V}
    \frac{1}{2}\ddt \int_{\Omega}|\V|^2\p_3\vp\d x = \underbrace{-\int_{\Gamma_\t} \Q (\V\cdot N) \d x'}_{I_1} +\underbrace{\int_{\Omega}\Q(\ppk \cdot \V)\d x}_{I_2} \underbrace{-\int_{\Omega}R^3(v)\cdot\V\p_3\vp\d x}_{I_3} \underbrace{-\int_{\Omega}R^2(q)\cdot\V \p_3\vp\d x}_{I_4}.
  \end{equation}
  \textbf{Control of $I_1$:} Invoking the higher-order kinematic boundary condition \eqref{high kinemat cond},  we have
\begin{equation}\label{eq-I1}
  \begin{aligned}
  I_1 = & \underbrace{-\int_{\Gamma_\t} \Q \p_t\TP^\alpha\psi \d x'}_{I_{11}} \underbrace{- \int_{\Gamma_\t} \Q (\overline{v}\cdot\TP )\left(\TP^\alpha\psi\right) \d x'}_{I_{12}} + \underbrace{\int_{\Gamma_\t} (\TP^\alpha q) \mathcal{S}_1 -(\p_3 q \TP^\alpha\psi)\mathcal{S} \d x'}_{I_{13}}\\
  & \qquad+\underbrace{\int_{\Gamma_\t} \TP^\alpha q\sum_{\substack{\beta'+\beta^{\prime\prime}= \alpha \\ |\beta'|=2,|\beta^{\prime\prime}|=1}} C_{\alpha}^{\beta'}\left(\TP^{\beta'} v\right)\cdot\left(\TP^{\beta^{\prime\prime}}N\right) \d x'}_{I_{14}}.
  \end{aligned}
\end{equation}

\paragraph*{Estimation on $I_{11}$:} Invoking the boundary condition of $\Q$ on $\Gamma_{\t}$ (i.e., the third equation in \eqref{eqs-AGU}), we have
\begin{equation}\label{}
  \begin{aligned}
  I_{11} = & \sigma\int_{\Gamma_\t} \TP^\alpha\TP \cdot \left(\tfrac{\TP \psi}{\sqrt{1+|\TP \psi|^2}}\right)\p_t\TP^\alpha\psi \d x' + \int_{\Gamma_\t} \p_3 q\TP^\alpha\psi \p_t\TP^\alpha\psi \d x'\\
  := &\, ST + RST.
  \end{aligned}
\end{equation}
To evaluate $ST$, we will frequently use the following identity:
\begin{equation*}
  \TP_i \left(\frac{1}{|N|}\right) = -\frac{\TP\psi\cdot\TP \,\TP_i\psi}{|N|^3},\quad i=1,2,
\end{equation*}
where $1\leq |N| =\sqrt{1+|\TP \psi|^2}$ denotes the length of normal vector $N=(-\p_1\psi,-\p_2\psi,1)$. Since for $|\alpha'| = 1$ with $\alpha_i^\prime \leq\alpha_i$, we obtain

\begin{equation}\label{TP-alpha-TP-psi}
  \begin{aligned}
  \TP^\alpha \left(\tfrac{\TP\psi}{|N|}\right)  = & \underbrace{ \frac{\TP^\alpha\TP\psi}{|N|} - \frac{\left(\TP\psi\cdot \TP^\alpha\,\TP\psi\right)\TP\psi}{|N|^3}}_{\text{top order}}+ \left[\TP^{\alpha - \alpha'},\,\frac{1}{|N|}\right]\TP^{\alpha'}\TP\psi\\
  &\qquad - \left[\TP^{\alpha - \alpha'},\,\frac{1}{|N|^3}\right]\left((\TP\psi\cdot\TP^{\alpha'}\TP\psi)\TP\psi\right) - \frac{1}{|N|^3}\left[\TP^{\alpha - \alpha'},\,\TP\psi\right]\left(\TP\psi\cdot\TP^{\alpha'}\TP\psi\right).
  \end{aligned}
\end{equation}
This implies, after integrating $\TP\cdot $ by parts in $ST$, that
\begin{equation}\label{}
  \begin{aligned}
  &ST = -\sigma\int_{\Gamma_\t}\left\{\frac{\TP^\alpha\TP\psi}{|N|} - \frac{\left(\TP\psi\cdot \TP^\alpha\,\TP\psi\right)\TP\psi}{|N|^3}\right\}\cdot \p_t\TP^\alpha\TP\psi\d x' -\sigma\int_{\Gamma_\t}\left[\TP^{\alpha - \alpha'},\,\frac{1}{|N|}\right]\TP^{\alpha'}\TP\psi\cdot \p_t\TP^\alpha\TP\psi\d x'\\
  &\qquad\qquad+ \sigma\int_{\Gamma_\t}\left[\TP^{\alpha - \alpha'},\,\frac{1}{|N|^3}\right]\left((\TP\psi\cdot\TP^{\alpha'}\TP\psi)\TP\psi\right)\cdot \p_t\TP^\alpha\TP\psi\d x'\\
  &\qquad\qquad\qquad\qquad + \sigma\int_{\Gamma_\t} \frac{1}{|N|^3}\left[\TP^{\alpha - \alpha'},\,\TP\psi\right]\left(\TP\psi\cdot\TP^{\alpha'}\TP\psi\right)\cdot \p_t\TP^\alpha\TP\psi\d x'\\
  &\quad\,:=\,ST_1+ST_2+ST_3+ST_4.
  \end{aligned}
\end{equation}
Here, $ST_1$ produces the positive energy term contributed by the surface tension, i.e.,
\begin{equation}\label{eq-ST1}
  \begin{aligned}
  ST_1 = & -\frac{1}{2}\ddt\int_{\Gamma_\t}\tfrac{\left|\sqrt{\sigma}\TP^\alpha\TP\psi \right|^2}{(1+|\TP\psi|^2)^{1/2}} -\tfrac{\left|\sqrt{\sigma}\TP\psi\cdot \TP^\alpha\,\TP\psi\right|^2}{(1+|\TP\psi|^2)^{3/2}}\d x' + \frac{\sigma}{2}\int_{\Gamma_\t}\p_t (1+|\TP\psi|^2)^{-1/2}\left|\TP^\alpha\TP\psi \right|^2\d x'\\
  &\qquad - \frac{\sigma}{2}\int_{\Gamma_\t}\p_t (1+|\TP\psi|^2)^{-3/2}\left|\TP\psi\cdot \TP^\alpha\,\TP\psi\right|^2\d x'\\
  &\qquad\qquad -\sigma\int_{\Gamma_\t}(1+|\TP\psi|^2)^{-3/2}\left(\TP\psi\cdot \TP^\alpha\TP\psi\right)\left(\p_t\TP\psi\cdot \TP^\alpha\TP\psi\right)\d x'\\
  := &\, ST_{11} + ST_{12} + ST_{13} + ST_{14}.
  \end{aligned}
\end{equation}
It is clear that
\begin{equation*}
  ST_{12} + ST_{13} + ST_{14} \leq P(|\TP\psi|_\infty)|\p_t\TP\psi|_\infty|\sqrt{\sigma}\TP^\alpha\TP\psi|_0^2,
\end{equation*}
and thus we conclude
\begin{equation}\label{es-ST1}
  ST_1 + \frac{1}{2}\ddt\int_{\Gamma_\t}\tfrac{\left|\sqrt{\sigma}\TP^\alpha\TP\psi \right|^2}{(1+|\TP\psi|^2)^{1/2}} -\tfrac{\left|\sqrt{\sigma}\TP\psi\cdot \TP^\alpha\,\TP\psi\right|^2}{(1+|\TP\psi|^2)^{3/2}}\d x' \leq P\left(|\psi|_{C^1},|\psi_t|_{C^1}\right)|\sqrt{\sigma}\TP^\alpha\TP\psi|_0^2.
\end{equation}

To finish the control of $I_{11}$, it remains to control $ST_2 + ST_3 + ST_4$ and $RST$. For $ST_i,\,i=2,3,4$, we integrate $\TP$ in $\p_t\TP^\alpha\TP\psi$ by parts to get
\begin{equation}\label{es-ST234}
  ST_2 + ST_3 + ST_4 \leq P(|\psi|_{C^3})\, \sum_{k=1}^{3}|\sqrt{\sigma}\TP^{k+1}\psi|_0|\sqrt{\sigma}\psi|_3\, |\p_t\TP^\alpha\psi|_\infty.
\end{equation}
The term $RST$ is controlled by the surface tension energy. Taking $\alpha'\leq \alpha$ with $|\alpha'|=1$, we integrate $\TP^{\alpha'}$ in $\p_t\TP^\alpha\TP\psi$ by parts and then use trace theorem to yield
\begin{equation}\label{es-RST}
  RST \leq |\partial_3 q|_1|\TP^\alpha\psi|_1\,|\partial_t\TP^{\alpha-\alpha'}\psi|_\infty \leq |v\cdot N|_{C^2}\|\partial_3 q\|_{1.5}|\TP^\alpha\psi|_1.
\end{equation}
We collect the estimates \eqref{es-ST1}, \eqref{es-ST234} and \eqref{es-RST} to get $\forall\,t\in [0,T]$,
\begin{equation}\label{es-I11}
  \begin{aligned}
  &I_{11} + \frac{1}{2}\ddt\int_{\Gamma_\t}\tfrac{\left|\sqrt{\sigma}\TP^\alpha\TP\psi \right|^2}{(1+|\TP\psi|^2)^{1/2}} -\tfrac{\left|\sqrt{\sigma}\TP\psi\cdot \TP^\alpha\,\TP\psi\right|^2}{(1+|\TP\psi|^2)^{3/2}}\d x'\\
  &\quad\leq P(|\psi|_{C^3},|\psi_t|_{C^3})\Big\{|\sqrt{\sigma}\TP^\alpha\TP\psi|_0^2+ \sum_{k=1}^{3}|\sqrt{\sigma}\TP^{k+1}\psi|_0|\sqrt{\sigma}\psi|_3 +\|\partial_3 q\|_{1.5}|\TP^\alpha\psi|_1\Big\}.
  \end{aligned}
\end{equation}

\paragraph*{Estimation on $I_{12}$:} We express $I_{12}$ as
\begin{equation}\label{eq-I12}
  \begin{aligned}
  I_{12} = & \int_{\Gamma_\t} \sigma\TP^\alpha\TP\cdot \left(\tfrac{\TP\psi}{\sqrt{1+\TP\psi|^2}}\right) \overline{v}\cdot\TP\left(\TP^\alpha\psi\right) \d x' + \int_{\Gamma_\t} \p_3 q\TP^\alpha\psi\overline{v}\cdot \TP\left(\TP^\alpha\psi\right) \d x'\\
  =: &\,I_{121} + I_{122}.
  \end{aligned}
\end{equation}
The second term $I_{122}$ is bounded by
\begin{equation}\label{es-I122}
  I_{122} \leq |\psi|_{C^3}|\vb|_\infty \|\p_3 q\|_{1} |\TP^\alpha \TP \psi|_0.
\end{equation}
For the first term $I_{121}$, we follow the same process as in the estimate of $I_{11}$.  Integrating $\TP\cdot$ by parts in $I_{121}$ and then invoking \eqref{TP-alpha-TP-psi}, we obtain
\begin{equation}\label{es-I121}
  \begin{aligned}
  I_{121} = & -\sigma\int_{\Gamma_\t} \left\{\frac{\TP^\alpha\TP\psi}{|N|} - \frac{\left(\TP\psi\cdot \TP^\alpha\,\TP\psi\right)\TP\psi}{|N|^3}\right\}\cdot \left\{\TP\overline{v}\cdot\left(\TP\,\TP^\alpha\psi\right) + \overline{v}\cdot \TP\left(\TP\,\TP^\alpha\psi\right)\right\} \d x'\\
  & \qquad - \sigma\int_{\Gamma_\t} \left[\TP^{\alpha - \alpha'},\,\frac{1}{|N|}\right]\TP^{\alpha'}\TP\psi\cdot \left\{\TP\overline{v}\cdot\left(\TP^\alpha\TP\psi\right) + \overline{v}\cdot \TP\left(\TP^\alpha\TP\psi\right)\right\} \d x'\\
   & \qquad + \sigma\int_{\Gamma_\t} \left[\TP^{\alpha - \alpha'},\,\frac{1}{|N|^3}\right]\left((\TP\psi\cdot\TP^{\alpha'}\TP\psi)\TP\psi\right) \cdot \left\{\TP\overline{v}\cdot\left(\TP^\alpha\TP\psi\right) + \overline{v}\cdot \TP\left(\TP^\alpha\TP\psi\right)\right\} \d x'\\
   & \qquad + \sigma\int_{\Gamma_\t} \frac{1}{|N|^3}\left[\TP^{\alpha - \alpha'},\,\TP\psi\right]\left(\TP\psi\cdot\TP^{\alpha'}\TP\psi\right)\cdot \left\{\TP\overline{v}\cdot\left(\TP^\alpha\TP\psi\right) + \overline{v}\cdot \TP\left(\TP^\alpha\TP\psi\right)\right\} \d x'\\
  \leq & P(|\psi|_{C^2}) |\overline{v}|_{W^{1,\infty}}\left|\sqrt{\sigma}\TP^\alpha\TP\psi\right|_0^2 + P(|\psi|_{C^3})|\overline{v}|_{W^{1,\infty}} \sum_{k=1}^{3}|\sqrt{\sigma}\TP^{k+1}\psi|_0\left|\sqrt{\sigma}\TP^\alpha\TP\psi\right|_0.
  \end{aligned}
\end{equation}
Plugging \eqref{es-I121} and \eqref{es-I122} into \eqref{eq-I12}, we get
\blue{\begin{equation}\label{es-I12}
  \begin{aligned}
  I_{12}
  &\leq P(|\psi|_{C^3})|\vb|_{\infty}\|\p_3 q\|_1|\TP^\alpha\TP \psi|_0\\
  &+ P(|\psi|_{C^3})|\vb|_{W^{1,\infty}}\left( \left|\sqrt{\sigma}\TP^\alpha\TP\psi\right|_0^2
  + \sum_{k=1}^{3}|\sqrt{\sigma}\TP^{k+1}\psi|_0\left|\sqrt{\sigma}\TP^\alpha\TP\psi\right|_0\right).
  \end{aligned}
\end{equation}}
\begin{remark} \label{rmk I12}
The estimate for $I_{12}$, in particular, the first term in the RHS of \eqref{es-I12}, yields a structure of the following type:
\begin{equation}\label{special structure}
\blue{P(|\psi|_{C^3}) |\vb|_{\infty}\|\p_3 q\|_1\sqrt{E}.}
\end{equation}
Since $\|\p_3 q\|_1\leq P(c_0^{-1}, |\psi|_{C^3})\|\pp q\|_2$, the estimate \eqref{es-p-q-H2} implies that \eqref{special structure} becomes:
\begin{equation}\label{special structure'}
\blue{P(c_0^{-1}, |\psi|_{C^3}) |\vb|_{\infty}\left(\|v\|_{W^{1,\infty}}E+|\psi_{tt}|_{1.5}\sqrt{E}\right).}
\end{equation}
It is important to see that \eqref{special structure'} depends linearly on $\|v\|_{W^{1,\infty}}$, which eventually leads to \eqref{energy est after Gronwall} in Theorem \ref{thm-weak-CN-middle} provided that
$$\blue{|\vb|_{L^\infty([0, T^*);L^\infty)}\leq M.}$$ Note that this linear structure in $\|v\|_{W^{1, \infty}}$ is essential in the proof of Theorem \ref{thm-strong-CN}.
\end{remark}

\blue{Moreover, if $\vb(t)$ and $\TP  \vb(t)=\left(\p_1 \vb(t), \p_2\vb (t)\right)^T$ are continuous on $\Omega$, then we infer from Lemma \ref{thm-Linfty} that $|\vb|_{W^{1,\infty}} \leq \|\vb\|_{W^{1,\infty}}$.  This allows us to bound $|\vb|_{W^{1,\infty}}$ by $\|v\|_{W^{1,\infty}}$ since $\|\vb\|_{W^{1,\infty}}\leq \|v\|_{W^{1,\infty}}$.  In consequence,
\begin{equation}\label{es-I12 smh}
  \begin{aligned}
  I_{12}
  &\leq P(|\psi|_{C^3})|\vb|_{\infty}\|\p_3 q\|_1|\TP^\alpha\TP \psi|_0\\
  &+ P(|\psi|_{C^3})\|v\|_{W^{1,\infty}}\left( \left|\sqrt{\sigma}\TP^\alpha\TP\psi\right|_0^2
  + \sum_{k=1}^{3}|\sqrt{\sigma}\TP^{k+1}\psi|_0\left|\sqrt{\sigma}\TP^\alpha\TP\psi\right|_0\right).
  \end{aligned}
\end{equation}
}

\paragraph*{Estimation on $I_{13}$:} It remains to control the term $I_{13}$ in \eqref{eq-I1}. As before, we integrate $\TP\cdot$ by parts in the mean curvature term to obtain
\begin{equation}\label{eq-I13}
  I_{13} = \sigma\int_{\Gamma_\t} \TP^\alpha \left(\tfrac{\TP\psi}{\sqrt{1+|\TP\psi|^2}}\right)\cdot \TP \mathcal{S}_1 \d x' - \int_{\Gamma_\t} \partial_3 q\TP^\alpha\psi\mathcal{S}\d x':=I_{131}+I_{132}.
\end{equation}
Note that $\mathcal{S}_1$ contributes to $(\p_3v\cdot N)\TP^\alpha \psi$, and we would like to re-express $\p_3v\cdot N$ in a way that only tangential derivatives are involved.
Since $\pp\cdot v=0$, it holds that
\begin{align*}
\p_3 v^3=-\p_3\vp\p_1 v^1+\p_1\vp \p_3 v^1-\p_3\vp\p_2v^2+\p_2\vp\p_3 v^2,\quad\text{in}\,\,\Omega,
\end{align*}
which becomes, after restricting on $\Gamma_{\t}$, that
\begin{align*}
\p_3v^3 = -\p_1v^1-\p_2v^2+\p_1\psi \p_3 v^1+\p_2\psi\p_3 v^2,\quad\text{on}\,\,\Gamma_{\t}.
\end{align*}
Now, because
$$
\p_3 v\cdot N = -\p_3v^1 \p_1\psi-\p_3 v^2\p_2\psi + \p_3 v^3, \quad \text{on}\,\,\Gamma_{\t},
$$
we obtain
\begin{equation}\label{normal component of p3v}
\p_3 v\cdot N = -\TP\cdot \vb, \quad\text{on}\,\,\Gamma_{\t},
\end{equation}
and thus
\begin{align}\label{TP S1}
\TP \mathcal{S}_1 = -\TP  (\TP \cdot \vb)\TP^\alpha\psi-(\TP \cdot \vb)(\TP^\alpha\TP\psi)+ \sum_{\substack{\beta'+\beta^{\prime\prime}= \alpha \\ |\beta'|=1,|\beta^{\prime\prime}|=2}} C_{\alpha}^{\beta'}\TP\left\{\left(\TP^{\beta'} v\right)\cdot\left(\TP^{\beta^{\prime\prime}}N\right)\right\}.
\end{align}
Here, since $N=(-\TP\psi, 1)^T$, we have
\begin{equation*}
\left(\TP^{\beta'} v\right)\cdot\left(\TP^{\beta^{\prime\prime}}N\right)= -\left(\TP^{\beta'} \vb\right)\cdot\left(\TP^{\beta^{\prime\prime}}\TP\psi\right).
\end{equation*}

We plug the identity \eqref{TP S1} to $I_{131}$ and obtain
\begin{align*}
I_{131} =\sigma \int_{\Gamma_{\t}} \TP^{\alpha}\left(\frac{\TP\psi}{|N|}\right)\cdot \left[-\TP(\TP\cdot \vb)\TP^\alpha\psi-(\TP\cdot \vb)(\TP^\alpha\TP \psi)-\sum_{\substack{\beta'+\beta^{\prime\prime}= \alpha \\ |\beta'|=1,|\beta^{\prime\prime}|=2}} C_{\alpha}^{\beta'}\TP\left\{\left(\TP^{\beta'} \vb\right)\cdot\left(\TP^{\beta^{\prime\prime}}\TP\psi\right)\right\}\right]\d x',
\end{align*}
and thus
\begin{equation}
I_{131}\leq P(|\psi|_{C^3})|\vb|_{2}\sum_{k=1}^3|\sqrt{\sigma}\TP^k\TP \psi|_0+P(|\psi|_{C^3})|\vb|_{W^{1,\infty}}|\sqrt{\sigma}\TP^\alpha\TP\psi|_0\sum_{k=1}^3|\sqrt{\sigma}\TP^k\TP \psi|_0.
\end{equation}
Moreover,
\begin{equation}
I_{132} \leq P(|\psi|_{C^3})|\p_3 q|_0|\vb|_2.
\end{equation}
Then, by combining these two estimates, we have
\begin{equation}\label{es-I13 sob}
  \begin{aligned}
  I_{13}
  \leq P\left(|\psi|_{C^3}\right)\Big((1+|\vb|_{W^{1,\infty}})|\sqrt{\sigma}\psi|_4^2+\|v\|_{2.5}(|\sqrt{\sigma}\psi|_4+\|\p_3 q\|_1)\Big).
  \end{aligned}
\end{equation}
\blue{On the other hand, if $\vb(t)$ and $\TP  \vb(t)=\left(\p_1 \vb(t), \p_2\vb (t)\right)^T$ are continuous on $\Omega$, then parallel to the deduction of \eqref{es-I12 smh}, we have $|\vb|_{W^{1,\infty}} \leq \|\vb\|_{W^{1,\infty}}\leq \|v\|_{W^{1,\infty}}$. Therefore, 
\begin{equation}\label{es-I13 smh}
  \begin{aligned}
  I_{13} \leq   P\left(|\psi|_{C^3}\right)\Big((1+\|v\|_{W^{1,\infty}})|\sqrt{\sigma}\psi|_4^2+\|v\|_{2.5}(|\sqrt{\sigma}\psi|_4+\|\p_3 q\|_1)\Big).
  \end{aligned}
\end{equation}}

\paragraph*{Estimation on $I_{14}$:} We recall that
\begin{equation*}
  I_{14}=\int_{\Gamma_\t} \TP^\alpha q\sum_{\substack{\beta'+\beta^{\prime\prime}= \alpha \\ |\beta'|=2,|\beta^{\prime\prime}|=1}} C_{\alpha}^{\beta'}\left(\TP^{\beta'} v\right)\cdot\left(\TP^{\beta^{\prime\prime}}N\right) \d x'.
\end{equation*}
We use the $H^{-\frac{1}{2}}-H^{\frac{1}{2}}$ duality argument and trace theorem to obtain
\begin{equation}\label{es-I14}
  I_{14} \leq P\left(|\psi|_{C^3}\right)\|\TP^\alpha q\|_0\|v\|_3.
\end{equation}
\blue{Finally, we collect the estimates of $I_{11},\,I_{12},\,I_{13},\,I_{14}$ to conclude that
\begin{equation}\label{es-I1}
\begin{aligned}
  I_1& + \frac{1}{2}\ddt\int_{\Gamma_\t}\tfrac{\left|\sqrt{\sigma}\TP^\alpha\TP\psi \right|^2}{(1+|\TP\psi|^2)^{1/2}} -\tfrac{\left|\sqrt{\sigma}\TP\psi\cdot \TP^\alpha\,\TP\psi\right|^2}{(1+|\TP\psi|^2)^{3/2}}\d x'\\
  \leq&  P\left(|\psi|_{C^3},|\psi_t|_{C^3}\right) \Big\{\left(1 + |\vb|_{W^{1,\infty}}\right)|\sqrt{\sigma}\psi|_4^2 + (1+|\vb|_{\infty})\|\partial q\|_2|\sqrt{\sigma}\psi|_4 \\
  &+|\sqrt{\sigma}\psi|_4\|v\|_3+  \|\partial q\|_2\|v\|_3  \Big\}.
\end{aligned}
\end{equation}}
\blue{In addition, if $\vb(t)$ and $\TP  \vb(t)=\left(\p_1 \vb(t), \p_2\vb (t)\right)^T$ are continuous on $\Omega$, the estimate for $I_{12}$ changes from \eqref{es-I12} to \eqref{es-I12 smh}, while the estimate for $I_{13}$ changes from \eqref{es-I13 sob} to \eqref{es-I13 smh}. As a consequence, we have
\begin{equation}\label{es-I1 smh}
\begin{aligned}
  I_1& + \frac{1}{2}\ddt\int_{\Gamma_\t}\tfrac{\left|\sqrt{\sigma}\TP^\alpha\TP\psi \right|^2}{(1+|\TP\psi|^2)^{1/2}} -\tfrac{\left|\sqrt{\sigma}\TP\psi\cdot \TP^\alpha\,\TP\psi\right|^2}{(1+|\TP\psi|^2)^{3/2}}\d x'\\
  \leq&  P\left(|\psi|_{C^3},|\psi_t|_{C^3}\right) \Big\{\left(1 + \|v\|_{W^{1,\infty}}\right) |\sqrt{\sigma}\psi|_4^2 + (1+|\vb|_{\infty})\|\partial q\|_2|\sqrt{\sigma}\psi|_4 \\
  &+|\sqrt{\sigma}\psi|_4\|v\|_3+  \|\partial q\|_2\|v\|_3  \Big\}.
\end{aligned}
\end{equation}
}
\begin{remark}
  In the case when the moving surface boundary $\p\DD_{t,\t}$ is fixed (e.g., \cite{Ferrari-1993CMP}), $I_1$ is controlled differently and, in particular, the control norms in $\K_2(t)$ no longer appears. To elaborate on this, we first note that we no longer need to introduce Alinhac's good unknowns whenever $\psi=\psi(x')$ is smooth and $t$-independent. As a consequence, $N=(-\TP \psi, 1)$ is also $t$-independent, and the term associated with $I_1$ in \eqref{eq-AGU-V} reads
  \begin{align*}
   -\int_{\Gamma_{\t}} (\TP^\alpha q) (\TP^\alpha v\cdot N)\d x' = -\int_{\Gamma_{\t}} (\TP^\alpha q) \TP^\alpha \underbrace{(v\cdot N)}_{=0}\d x'+\int_{\Gamma_{\t}} (\TP^\alpha q)\left([\TP^\alpha, N]\cdot v\right)\d x'.
  \end{align*}
  Since $\psi$ is smooth, the worst contribution of the last integral is $\int_{\Gamma_{\t}}(\TP^\alpha q)(\TP^{\alpha'} N\cdot \TP^{\alpha-\alpha'} v)\d x'$ with $|\alpha'|=1$, which can be controlled straightforwardly by $C\|\p q\|_2\|v\|_3$, after using the $H^{-\frac{1}{2}}-H^{\frac{1}{2}}$ duality argument and the trace theorem.
  \end{remark}

\noindent{\bf Control of $I_2$.} Note that
\begin{equation}\label{eq-I2}
  I_2 = -\int_{\Omega}\left(\TP^\alpha q -\p_3^\vp q \TP^\alpha\vp\right) \left(R_i^1(v^i) + \ppk_3\ppk_iv^i\TP^\alpha\vp\right)\d x,
\end{equation}
which can be controlled directly by Cauchy-Schwarz inequality:
\begin{equation}\label{es-I2}
  I_2 \leq P\left(c_0^{-1},|\psi|_{C^3}\right)\,\|\p q\|_2\|v\|_3.
\end{equation}

\noindent {\bf Control of $I_3$.} Now we turn to control $I_3$. Recall the definition \eqref{eq-tan-cmut-Dtvp} of remainder $R^3(\cdot)$. We use the Cauchy-Schwarz inequality to get
\begin{align*}
  I_3 \leq \left(\int_{\Omega}|\V|^2\p_3\vp\d x\right)^{1/2}\left(\int_{\Omega}\left|D_t^\vp\p_3^\vp v \TP^\alpha\vp + \widetilde{R^3}(v)\right|^2\p_3\vp\d x\right)^{1/2}.
\end{align*}
To evaluate the second factor on the RHS above, we expand $D_t^\vp\p_3^\vp v$ as
\begin{align*}
  D_t^\vp\p_3^\vp v = & \frac{1}{\p_3 \vp}\left(\p_3\p_t v - \frac{\p_t \vp}{\p_3 \vp}\p_3^2 v\right) + \left(\frac{-\p_3 \p_t\vp}{(\p_3 \vp)^2} + \frac{\p_t \vp}{\p_3 \vp}\frac{\p_3^2 \vp}{(\p_3 \vp)^2}\right)\p_3 v\\
  & + \frac{1}{\p_3 \vp}\overline{v}\cdot\left(\p_3\TP  v - \frac{\TP  \vp}{\p_3 \vp}\p_3^2 v\right) + \left(\frac{-\overline{v}\cdot\p_3 \TP \vp}{(\p_3 \vp)^2} + \frac{\overline{v}\cdot\TP  \vp}{\p_3 \vp}\frac{\p_3^2 \vp}{(\p_3 \vp)^2}\right)\p_3 v + \frac{v_3\p_3^2 v}{(\p_3 \vp)^2} - \frac{\p_3^2\vp}{(\p_3 \vp)^3}v_3\p_3 v.
\end{align*}
It follows that
\begin{align*}
  &\left(\int_{\Omega}\left|D_t^\vp\p_3^\vp v \TP^\alpha\vp\right|^2\p_3\vp\d x\right)^{1/2} \leq \|\p_3\vp\|_\infty^{1/2}\|\TP^\alpha\vp\|_\infty\|D_t^\vp\p_3^\vp v\|_0\\
  &\qquad \leq C\left(1 +  |\psi|_\infty\right)^{1/2}|\TP^\alpha \psi|_\infty\left\{\frac{1}{c_0}\left(\|\p_3\p_t v\|_0 + \frac{|\p_t\psi|_\infty}{c_0}\|\p_3^2 v\|_0\right)\right.\\
  &\qquad\qquad+ \frac{1}{c_0^2}\left(|\p_t\psi|_\infty + \frac{|\p_t\psi|_\infty}{c_0}\cdot |\psi|_\infty\right)\|\p_3 v\|_0 + \frac{\|\overline{v}\|_\infty}{c_0}\left(\|\p_3\TP  v\|_0 +\frac{|\TP \psi|_\infty}{c_0}\|\p_3^2 v\|_0\right)\\
  &\qquad\qquad\left. + \frac{1}{c_0^2}\left(|\TP \psi|_\infty + \frac{|\TP \psi|_\infty |\psi|_\infty}{c_0}\right)\|\overline{v}\|_\infty\|\p_3 v\|_0 + \frac{\|v_3\|_\infty}{c_0^2}\left(\|\p_3^2 v\|_0 + \frac{|\psi|_\infty}{c_0}\|\p_3 v\|_0\right)\right\}\\
  &\qquad \leq P\left(c_0^{-1},|\psi|_{C^3},|\psi_t|_{C^3}\right) \left\{\|\p_3\p_tv\|_0 + (1+\|v\|_\infty) \|v\|_2\right\}.
\end{align*}
Then invoke \eqref{eq-Dtvp} and write $\p_t v$ as
\begin{equation}\label{eq-pt-v}
  \p_t v = -\overline{v}\cdot \TP v - \frac{1}{\p_3 \vp}(v\cdot \N-\p_t\vp)\p_3 v - \ppk q.
\end{equation}
Consequently,
\begin{equation*}
  \|\p_3\p_tv\|_0 \leq P\left(c_0^{-1},|\psi|_{C^3},|\psi_t|_{C^1}\right) \Big(\left(1 + \|v\|_{W^{1,\infty}}\right)\|v\|_2 + \|\p q\|_1\Big).
\end{equation*}

Next, by using the commutator estimates \eqref{commu-es}, those term including $\widetilde{R^3}(v)$ (defined in \eqref{eq-R3-f}) can easily be bounded as
\begin{equation*}
  \left(\int_{\Omega}\left|\widetilde{R^3}(v)\right|^2\p_3\vp\d x\right)^{1/2} \leq P\left(c_0^{-1},|\psi|_{C^3}\right)\left(\|v\|_{W^{1,\infty}}\|v\|_3 + |\psi_t|_{C^2}\|\p v\|_2\right).
\end{equation*}
Combining the above estimates, we obtain
\begin{equation}\label{es-I3}
  I_3 \leq P\left(c_0^{-1},|\psi|_{C^3},|\psi_t|_{C^3}\right)\Big(\|\p q\|_1 + (1 + \|v\|_{W^{1,\infty}})\|v\|_3\Big)\left(\int_{\Omega}|\V|^2\p_3\vp\d x\right)^{1/2}.
\end{equation}

\noindent{\bf Control of $I_4$.} Recall the definitions \eqref{eq-tan-cmut-ppk} and \eqref{eq-R1-f} of remainders $R^2(\cdot)$ and $R^1(\cdot)$. We use the Cauchy-Schwarz inequality to get
\begin{align*}
  I_4 \leq \left(\int_{\Omega}|\V|^2\p_3\vp\d x\right)^{1/2}\left(\int_{\Omega}\left|\ppk_3\ppk q\TP^\alpha\vp + R^1(q)\right|^2\p_3\vp\d x\right)^{1/2}.
\end{align*}
We need to estimate the second factor above. Firstly, we expand $\p_3^\vp \ppk q$ as
\begin{align*}
  \p_3^\vp \p_i^\vp q = & \frac{\p_3\p_i q}{\p_3\vp} - \frac{\p_i\vp}{(\p_3\vp)^2}\p_3^2 q - \frac{(\p_3\p_i\vp)\p_3\vp - \p_i\vp\p_3^2\vp}{(\p_3\vp)^3}\p_3 q,\;i=1,2,\\
  \p_3^\vp \p_3^\vp q = & \frac{\p_3^2 q\p_3\vp - \p_3 q\p_3^2 \vp} {(\p_3\vp)^3}.
\end{align*}
Then we can obtain
\begin{align*}
  \|\p_3^\vp \ppk q\|_0 \leq & \frac{1}{c_0}\|\p_3\p_i q\|_0 + \frac{|\p_i\psi|_\infty}{c_0^2}\|\p_3^2 q\|_0 + \frac{|\p_i\psi|_\infty + |\p_i\psi|_\infty |\psi|_\infty}{c_0^3}\|\p_3 q\|_0  + \frac{1}{c_0^2}\|\p_3^2 q\|_0 + \frac{|\psi|_\infty}{c_0^3}\|\p_3 q\|_0\\
  \leq & P(c_0^{-1},|\psi|_{C^1})\|\p_3 q\|_{1}.
\end{align*}
Next, the remaining terms can easily be bounded as
\begin{equation*}
  \left(\int_{\Omega}\left|R^1(q)\right|^2\p_3\vp\d x\right)^{1/2} \leq P\left(c_0^{-1},|\psi|_{C^3}\right)\|\p_3 q\|_2.
\end{equation*}
Finally, it follows from the two estimates above that
\begin{equation}\label{es-I4}
  I_4 \leq P(c_0^{-1},|\psi|_{C^3})\|\p_3 q\|_2\left(\int_{\Omega}|\V|^2\p_3\vp\d x\right)^{1/2}.
\end{equation}

Since
$$E(t) = |\sqrt{\sigma}\psi|_4^2 + \|v\|_3^2,$$
we plug in the
 estimates \eqref{es-I1}, \eqref{es-I2}, \eqref{es-I3} and \eqref{es-I4} into \eqref{eq-AGU-V} to obtain
\blue{\begin{equation}\label{es-AGU-V-1}
  \begin{aligned}
  &\frac{1}{2}\ddt \int_{\Omega}|\V|^2\p_3\vp\d x + \frac{1}{2}\ddt\int_{\Gamma_\t}\tfrac{\left|\sqrt{\sigma}\TP^\alpha\TP\psi \right|^2}{(1+|\TP\psi|^2)^{1/2}} -\tfrac{\left|\sqrt{\sigma}\TP\psi\cdot \TP^\alpha\,\TP\psi\right|^2}{(1+|\TP\psi|^2)^{3/2}}\d x'\\
  \leq & P\left(c_0^{-1},|\psi(t)|_{C^3},|\psi_t(t)|_{C^3}\right) \left[\left(1 + |\vb|_{W^{1,\infty}}\right)E(t) + (1+|\vb|_{\infty})\|\p q\|_2\sqrt{E(t)} \,\right]\\
  &\quad + P\left(c_0^{-1},|\psi(t)|_{C^3},|\psi_t(t)|_{C^3}\right) \Big[\,\|\partial q\|_2 + (1 + \|v\|_{W^{1,\infty}})\sqrt{E(t)}\,\Big]\left(\int_{\Omega}|\V|^2\p_3\vp\d x\right)^{1/2}.
  \end{aligned}
\end{equation}}
\blue{On the other hand,  if $\vb(t)$ and $\TP  \vb(t)=\left(\p_1 \vb(t), \p_2\vb (t)\right)^T$ are continuous on ${\Omega}$, we plug in the
 estimates \eqref{es-I1 smh}, \eqref{es-I2}, \eqref{es-I3} and \eqref{es-I4} into \eqref{eq-AGU-V} to obtain
 \begin{equation}\label{es-AGU-V-1 smh}
  \begin{aligned}
  &\frac{1}{2}\ddt \int_{\Omega}|\V|^2\p_3\vp\d x + \frac{1}{2}\ddt\int_{\Gamma_\t}\tfrac{\left|\sqrt{\sigma}\TP^\alpha\TP\psi \right|^2}{(1+|\TP\psi|^2)^{1/2}} -\tfrac{\left|\sqrt{\sigma}\TP\psi\cdot \TP^\alpha\,\TP\psi\right|^2}{(1+|\TP\psi|^2)^{3/2}}\d x'\\
  \leq & P\left(c_0^{-1},|\psi(t)|_{C^3},|\psi_t(t)|_{C^3}\right) \left[\left(1 + \|v\|_{W^{1,\infty}}\right)E(t) + (1+|\vb|_{\infty})\|\partial q\|_2\sqrt{E(t)}\right] \\
  &\quad + P\left(c_0^{-1},|\psi(t)|_{C^3},|\psi_t(t)|_{C^3}\right) \Big[\,\|\partial q\|_2 + (1 + \|v\|_{W^{1,\infty}})\sqrt{E(t)}\,\Big]\left(\int_{\Omega}|\V|^2\p_3\vp\d x\right)^{1/2}.
  \end{aligned}
\end{equation}
}
Furthermore, noting that
\begin{equation*}
  \frac{\left|\sqrt{\sigma}\TP^\alpha\TP\psi \right|^2}{(1+|\TP\psi|^2)^{1/2}} -\frac{\left|\sqrt{\sigma}\TP\psi\cdot \TP^\alpha\,\TP\psi\right|^2}{(1+|\TP\psi|^2)^{3/2}} \geq \frac{|\sqrt{\sigma}\TP^\alpha\,\TP\psi|^2}{(1+|\TP\psi|^2)^{3/2}},
\end{equation*}
we  integrate \eqref{es-AGU-V-1} over $[0,t]$ where $ t\leq T^*$ to acquire \eqref{es-AGU-V-2}. This concludes the proof of Theorem \ref{thm-tangential}.
\subsection{Elliptic Estimates for $q$}\label{subsect. q}
In light of Theorem \ref{thm-tangential}, we still require the control of $\|\p q\|_2$ to close the energy estimate. This is done by studying the elliptic equation verified by $q$:
\begin{align}\label{q elleq}
-\triangle^\vp q:=\pp\cdot(\pp q) = (\ppk v)^{T}: (\ppk v), \quad \text{in}\,\,\Omega,
\end{align}
which is derived by taking the divergence operator $\pp\cdot$ to the momentum equation
$$
\Dtp v+\pp q=0.
$$
In particular, we estimate $\|\p q\|_2$ by studying \eqref{q elleq} equipped with Neumann boundary condition on $\Gamma_{\t}$ (i.e., \eqref{eqs-q-Neu-BC}) using the Hodge-type elliptic estimate \eqref{es-Hodge-2}. In this process, however, we pick up a lower order quantity $\|\pp q\|_0$ which also has to be controlled.

\subsubsection{Estimate for $\|\pp q\|_0$}
We first bound $\|\pp q\|_0$ by considering the elliptic equation verified by $q$ equipped with the Dirichlet boundary condition:
\begin{equation}\label{eqs-q-Dir-BC}\left\{
  \begin{aligned}
  -\triangle^\vp q = & (\ppk v)^{T}: (\ppk v),\quad& \i \;\Omega,\\
  q = & -\sigma\TP \cdot \left(\tfrac{\TP \psi}{\sqrt{1+|\TP \psi|^2}}\right), \quad&\text{on}\;\Gamma_\t,\\
  n\cdot \p q = & 0, \quad&\text{on}\;\Gamma_\b.
  \end{aligned}\right.
\end{equation}
\begin{lemma}\label{lem q lower est}
If $q$ verifies \eqref{eqs-q-Dir-BC}, then \blue{for each sufficiently small $\epsilon>0$,} we have
\begin{align}\label{q lower est}
\|\pp q\|_0^2 \leq \epsilon P(|\Omega|, c_0^{-1}, |\psi|_{C^1})\|\p q\|_1^2+C(\epsilon^{-1})P(c_0^{-1}, |\psi|_{C^1})\left(\|\p v\|_{\infty}^2\|\p v\|_0^2+|\sqrt{\sigma}\psi|_4^2\right).
\end{align}
\end{lemma}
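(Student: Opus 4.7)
The plan is a weighted $H^1$ energy estimate on the Dirichlet system \eqref{eqs-q-Dir-BC}. I would multiply the equation $-\triangle^\vp q=(\pp v)^T:(\pp v)$ by $q$, integrate against $\p_3\vp\,dx$ over $\Omega$, and apply the Piola identity $\p_j(\A_i^j\p_3\vp)=0$ to move one $\pp$ onto $q$. This produces
\begin{equation*}
\int_{\Omega}|\pp q|^2\p_3\vp\,dx = \int_\Omega \big((\pp v)^T\!:\!(\pp v)\big)\,q\,\p_3\vp\,dx + \int_{\Gamma_\t}(\pp q\cdot N)\,q\,dx',
\end{equation*}
after observing that the $\Gamma_\b$ boundary contribution vanishes: on $\Gamma_\b$ we have $\p_3\vp=1$, so $\pp_3 q|_{\Gamma_\b}=\p_3 q|_{\Gamma_\b}=n\cdot\p q|_{\Gamma_\b}=0$. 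Using $\p_3\vp\geq c_0$, the LHS dominates $c_0\|\pp q\|_0^2$.

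For the interior integral, I would combine the pointwise bound $|(\pp v)^T\!:\!(\pp v)|\leq|\pp v|^2$ with the relation $\pp=\A^T\p$ to obtain $\|(\pp v)^T\!:\!(\pp v)\|_0\leq P(c_0^{-1},|\psi|_{C^1})\|\p v\|_\infty\|\p v\|_0$. To control $\|q\|_0$ I would invoke a trace-Poincar\'e inequality $\|q\|_0\leq C(|\Omega|)(|q|_{L^2(\Gamma_\t)}+\|\p q\|_0)$, which is valid since $|\Gamma_\t|>0$ and $q|_{\Gamma_\t}$ is prescribed. The Dirichlet datum $q|_{\Gamma_\t}=-\sigma\mathcal{H}$ satisfies $|q|_{L^2(\Gamma_\t)}\leq P(|\psi|_{C^1})\sigma|\psi|_2 \leq C\sqrt{\sigma}\,|\sqrt{\sigma}\psi|_4$ (using $|\psi|_2\leq|\psi|_4$ and that $\sigma$ is a fixed constant). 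Cauchy-Schwarz together with Young's inequality (and $\|\p q\|_0\leq\|\p q\|_1$) then bounds the interior integral by $\epsilon\|\p q\|_1^2+C(\epsilon^{-1})P(c_0^{-1},|\psi|_{C^1})(\|\p v\|_\infty^2\|\p v\|_0^2+|\sqrt{\sigma}\psi|_4^2)$.

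For the $\Gamma_\t$ boundary term I would apply Cauchy-Schwarz directly: $|\int_{\Gamma_\t}(\pp q\cdot N)q\,dx'|\leq|N|_\infty\,|\pp q|_{L^2(\Gamma_\t)}\,|q|_{L^2(\Gamma_\t)}$. The Sobolev trace theorem gives $|\pp q|_{L^2(\Gamma_\t)}\leq C\|\pp q\|_1\leq P(c_0^{-1},|\psi|_{C^1})\|\p q\|_1$, where the conversion from $\pp$ to $\p$ uses $\pp=\A^T\p$. Paired with the previous bound on $|q|_{L^2(\Gamma_\t)}$ and another application of Young's inequality, this contribution becomes $\epsilon P(|\Omega|,c_0^{-1},|\psi|_{C^1})\|\p q\|_1^2+C(\epsilon^{-1})P(c_0^{-1},|\psi|_{C^1})|\sqrt{\sigma}\psi|_4^2$. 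Summing the two contributions, dividing by $c_0$, and redefining $\epsilon$ delivers \eqref{q lower est}.

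The main technical subtlety is handling the Dirichlet datum $q|_{\Gamma_\t}=-\sigma\mathcal{H}$, which encodes two derivatives of $\psi$: I would deliberately choose the crude bound $|\psi|_2\leq|\psi|_4$ to cleanly produce $|\sqrt{\sigma}\psi|_4^2$ on the RHS, rather than working with fractional Sobolev norms of $\mathcal{H}$. The appearance of $\epsilon\|\p q\|_1^2$ is essentially unavoidable because the boundary trace of $\pp q$ is controlled only at the $H^1(\Omega)$ level; this small-parameter term is engineered so that it will be absorbed later by the leading-order Neumann elliptic estimate for $\|\p q\|_2$ obtained from \eqref{es-Hodge-2}.
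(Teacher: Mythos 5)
Your proposal follows the same overall strategy as the paper: multiply the Dirichlet problem \eqref{eqs-q-Dir-BC} by $q$, integrate against $\p_3\vp\,dx$, integrate by parts via Lemma~\ref{lem-A2} (the $\Gamma_\b$ contribution vanishes since $\p_3 q|_{\Gamma_\b}=0$), estimate the interior term via Cauchy--Schwarz and Young, and estimate the $\Gamma_\t$ boundary term via trace and Young, absorbing the $\epsilon\|\p q\|_1^2$ piece. The one genuine difference is in how $\|q\|_0$ is controlled: the paper invokes the Poincar\'{e}--Wirtinger inequality and then bounds the mean $\int_\Omega q\,dx$ by $C\|\p q\|_0$ through an integration by parts against the field $X=(x^1,0,0)^T$ (which requires some care since $x^1$ is not globally defined on $\T^2$), whereas you use the trace-Poincar\'{e} inequality $\|q\|_0\leq C(|\Omega|)(|q|_{L^2(\Gamma_\t)}+\|\p q\|_0)$ and control the boundary trace directly from the Dirichlet datum $q|_{\Gamma_\t}=-\sigma\mathcal{H}$. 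Your route is more standard and sidesteps the torus-periodicity subtlety; the price is an additional $|\sqrt{\sigma}\psi|_4^2$ contribution from the interior term, but this lands in the same slot on the right-hand side, so the final estimate is identical. One small imprecision shared with the paper: converting $\|\pp q\|_1$ to $\|\p q\|_1$ in the boundary trace strictly needs $|\psi|_{C^2}$ rather than $|\psi|_{C^1}$, since $\p\A$ contains two derivatives of $\vp$; this does not affect the lemma's downstream use, where $|\psi|_{C^3}$ is available.
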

\begin{proof}
Invoking Lemma \ref{lem-A2}, we have
\begin{align*}
\int_{\Omega} |\pp q|^2\p_3\vp\d x = -\int_{\Omega} q(\triangle^\vp q)\p_3 \vp \d x+\int_{\Gamma_{\t}} q (N\cdot \pp q)\d x',
\end{align*}
where
\begin{align*}
-\int_{\Omega} q(\triangle^\vp q)\p_3\vp \d x \leq P(|\psi|_{C^1})\|\triangle^\vp q\|_0\|q\|_0
\leq P(|\psi|_{C^1})\Big(\epsilon \|q\|_0^2 + C(\epsilon^{-1})\|\triangle^\vp q\|_0^2\Big)\\
\leq \epsilon P(|\psi|_{C^1})\|q\|_0^2+C(\epsilon^{-1})P(c_0^{-1}, |\psi|_{C^1})\|\p v\|_{\infty}^2\|\p v\|_0^2,
\end{align*}
and
\begin{align*}
\int_{\Gamma_{\t}} q (N\cdot \pp q)\d x' \leq P(|\psi|_{C^1})\Big(\epsilon\|\pp q\|_1^2+C(\epsilon^{-1})|q|_0^2\Big)\\
\leq \epsilon P(c_0^{-1}, |\psi|_{C^1}) \|\p q\|_1^2+C(\epsilon^{-1})P(|\psi|_{C^1})\sum_{k=0}^1|\TP^{k+1}\psi|_0^2.
\end{align*}
Summing these up, we obtain
\begin{align}\label{est pp q}
\|\pp q\|_0^2 \leq P(c_0^{-1}, |\psi|_{C^1})\left(\epsilon \|q\|_0^2+\epsilon \|\p q\|_1^2\right)+C(\epsilon^{-1})P(c_0^{-1}, |\psi|_{C^1})\left(\|\p v\|_{\infty}^2\|\p v\|_0^2+|\sqrt{\sigma}\psi|_4^2\right).
\end{align}
On the other hand, using Poincar\'e's inequality, we get
\[
\|q\|_0^2 \leq C(|\Omega|)\Big(\|\p q\|_0^2+ \left(\int_{\Omega} q\d x\right)^2\Big).
\]
Let $X=(x^1,0,0)^T$. Then
\begin{align*}
\left(\int_{\Omega} q\d x\right)^2 = \left(\int_{\Omega} \p_i X^i q \d x\right)^2 = \left( \int_{\Omega} x^1\p_1 q\d x\right)^2 \leq C\|\p q\|_0^2.
\end{align*}
Thus,
\begin{align}\label{est q}
\|q\|_0^2 \leq C(|\Omega|)\|\p q\|_0^2.
\end{align}
Finally, \eqref{q lower est} follows from \eqref{est pp q} and \eqref{est q}.
\end{proof}
\subsubsection{Estimate for $\|\p q\|_2$}
We next bound $\|\p q\|_2$ by considering the elliptic equation of $q$ equipped with Neumann boundary conditions.
To achieve this, we take the dot product of the momentum equation
with $\N$ to get:
\begin{equation*}
  (\p_t v)\cdot \N + (\overline{v}\cdot \TP v)\cdot \N  + (\frac{1}{\p_3 \vp}(v\cdot \N-\p_t\vp)\p_3 v )\cdot \N + \ppk q \cdot \N = 0.
\end{equation*}
Since
\begin{equation*}
  \begin{aligned}
  (\p_t v)\cdot \N\big|_{\Gamma_\t} = & \p_{t}^2\psi + (\ol{v}\cdot\TP) \p_t \psi,\\
  (\overline{v}\cdot \TP v)\cdot \N \big|_{\Gamma_\t} = & (\overline{v}\cdot \TP v)\cdot N,\\
  (v\cdot \N-\p_t\vp)\p_3^\vp v \cdot \N \big|_{\Gamma_\t} = & 0,
  \end{aligned}
\end{equation*}
we obtain
\begin{align*}
\N\cdot \ppk q \big|_{\Gamma_\t} = -(\overline{v}\cdot\TP v)\cdot N - \p_t^2 \psi - (\vb\cdot\TP)(v\cdot N),
\end{align*}
and
\begin{equation}\label{eqs-q-Neu-BC}\left\{
  \begin{aligned}
  -\triangle^\vp q = & (\ppk v)^{T}: (\ppk v),\quad& \i \;\Omega,\\
  N\cdot \ppk q = & -(\overline{v}\cdot\TP v)\cdot N - \p_t^2 \psi - (\overline{v}\cdot\TP)(v\cdot N), \quad&\text{on}\;\Gamma_\t,\\
  n\cdot \p q = & 0, \quad&\text{on}\;\Gamma_\b.
  \end{aligned}\right.
\end{equation}
\begin{remark}
We employ the Neumann boundary condition on $\Gamma_{\t}$ instead of the Dirichlet condition as it yields a regularity loss while estimating $q$ at the top order. Particularly, in light of \eqref{es-Hodge-1}, we require $\psi\in H^{4.5}(\Gamma_{\t})$ to control the mean curvature.
\end{remark}

To control $\|\p q\|_2^2$, it suffices to estimate $\|\ppk q\|_2^2$ thanks to the fact that
\begin{equation}\label{es-p-q-1}
    \|\p q\|_2^2 \leq P(c_0^{-1},|\psi|_{C^3})\|\ppk q\|_2^2.
  \end{equation}
Now, by applying \eqref{es-Hodge-2} to $\ppk q$ with $s=2$, we get
\begin{equation}\label{es-ppk-q}
  \begin{aligned}
  \|\ppk q\|_2^2 \leq & {P(|\psi|_{C^3})}\Big(\|(\ppk v)^{T}: (\ppk v)\|_1^2 + |(\overline{v}\cdot\TP v)\cdot N|_{1.5}^2 + |\p_t^2 \psi|_{1.5}^2 + |\overline{v}\cdot\TP(v\cdot N)|_{1.5}^2+\|\pp q\|_0^2\Big)\\
  \leq & {P(|\psi|_{C^3})} \left(\|v\|_{W^{1,\infty}}^2\|v\|_2^2 + \|v\|_\infty^2\|v\|_3^2 + |\p_t^2 \psi|_{1.5}^2+\|\pp q\|_0^2\right)\\
  \leq & {P(|\psi|_{C^3})} \left(\|v\|_{W^{1,\infty}}^2\|v\|_3^2 + |\p_t^2 \psi|_{1.5}^2+\|\pp q\|_0^2\right).
  \end{aligned}
\end{equation}
Finally, since \eqref{assump Kt} implies $|\psi|_{C^3}\leq M$, invoking \eqref{q lower est} and taking $\epsilon=\epsilon(M)$ sufficiently small, we infer from \eqref{es-ppk-q} and \eqref{es-p-q-1} that
\begin{align}\label{es-p-q-H2}
\begin{aligned}
\|\p q\|_2 \leq P(c_0^{-1},|\psi|_{C^3})\left((\|v\|_{W^{1,\infty}}+1)\sqrt{E(t)} + |\p_t^2 \psi|_{1.5}\right).
\end{aligned}
\end{align}


\subsection{Proof of Theorem \ref{thm-weak-CN-middle}}\label{sect. proof thm 1.1}
 Because Poincar\'{e}'s inequality implies that
\begin{equation}\label{ineq-Poincare}
  \frac{1}{C}|\psi|_4 \leq |\TP^4 \psi|_0 \leq C|\psi|_4,
\end{equation}
holds for some $C>0$,
 we deduce from \eqref{es-div-curl-ctrl-v} and \eqref{ineq-tan-es-f} that for any $t\in (0, T^*)$,
\begin{equation} \label{E control step 1}
  \begin{aligned}
    E(t) \leq & P(|\psi|_{C^3})\left(\|v\|_0^2+\|\omega^\vp\|_2^2 + \|\overline{\partial}^3 v\|_0^2 + |\sqrt{\sigma}\TP^4\psi(t)|_0^2\right)\\
    \leq & P(c_0^{-1}, |\psi|_{C^3})\left(\|v\|_0^2+\|\omega^\vp\|_2^2 + \|\V\|_0^2 + \|\p v\|_0^2+ |\sqrt{\sigma}\TP^4\psi(t)|_0^2\right),
  \end{aligned}
\end{equation}
where the second inequality follows from
$$
\|\ppk_3 v\TP^\alpha\vp\|_0^2 \leq P(c_0^{-1}, |\psi|_{C^3})\|\p v\|_0^2,\quad |\alpha|=3.
$$

Now, in view of \eqref{assump Kt}, it holds that
\begin{align}
P(c_0^{-1}, |\psi(t)|_{C^3}, |\psi_t(t)|_{C^3}) \leq P(c_0^{-1}, M),\quad \forall t\in (0, T^*).
\end{align}
Then, invoking the estimate of $\|v\|_0^2$ in \eqref{v in L2},  the estimate of $\|\omp\|_2^2$ in Lemma \ref{lem-omega-p-v}, the estimate of  $\|\p v\|_0^2$ in Lemma \ref{lem-p-v}, as well as the tangential estimate in Theorem \ref{thm-tangential}, we have, by \eqref{E control step 1}, that
\blue{\begin{equation}\label{Et1}
  \begin{aligned}
  E(t)\leq & P(c_0^{-1}, M)E(0) + P(c_0^{-1}, M)\int_{0}^{t}(1+\|v(\tau)\|_{W^{1,\infty}}) E(\tau) \d \tau\\
  & + P(c_0^{-1}, M)\int_{0}^{t}\left((1 + \|v(\tau)\|_{W^{1,\infty}})E(\tau) + \|\p q(\tau)\|_1\sqrt{E(\tau)}\right)\d \tau \\
  & +P(c_0^{-1}, M)\int_{0}^{t}\left(1 + |\vb(\tau)|_{W^{1,\infty}}\right)\Big(E(\tau) + \sqrt{E(\tau)}\Big) + (1+|\vb(\tau)|_{\infty})\|\partial q(\tau)\|_2\sqrt{E(\tau)}\d \tau\\
  &+P(c_0^{-1}, M)\int_{0}^{t}\Big((1 + \|v(\tau)\|_{W^{1,\infty}})E(\tau) + \|\partial q(\tau)\|_2\sqrt{E(\tau)}\Big)\d \tau.
  \end{aligned}
\end{equation}}
\blue{On the other hand, if $\vb(t)$ and $\TP  \vb(t)=\left(\p_1 \vb(t), \p_2\vb (t)\right)^T$ are continuous on ${\Omega}$, \eqref{Et1} becomes
\begin{equation}\label{Et1 smh}
  \begin{aligned}
  E(t)\leq & P(c_0^{-1}, M)E(0) + P(c_0^{-1}, M)\int_{0}^{t}(1+\|v(\tau)\|_{W^{1,\infty}}) E(\tau) \d \tau\\
  & + P(c_0^{-1}, M)\int_{0}^{t}\left((1 + \|v(\tau)\|_{W^{1,\infty}})E(\tau) + \|\p q(\tau)\|_1\sqrt{E(\tau)}\right)\d \tau \\
  & +P(c_0^{-1}, M)\int_{0}^{t}\left(1 + \|v(\tau)\|_{W^{1,\infty}}\right)\Big(E(\tau) + \sqrt{E(\tau)}\Big) + (1+|\vb(\tau)|_{\infty})\|\partial q(\tau)\|_2\sqrt{E(\tau)}\d \tau\\
  &+P(c_0^{-1}, M)\int_{0}^{t}\Big((1 + \|v(\tau)\|_{W^{1,\infty}})E(\tau) + \|\partial q(\tau)\|_2\sqrt{E(\tau)}\Big)\d \tau.
  \end{aligned}
\end{equation}
}
\blue{Since \eqref{assump Kt} implies also
\begin{align}\label{bound}
|\psi_{tt}(t)|_{1.5}+|\vb(t)|_{\infty}\leq M, \quad \forall t\in (0, T^*),
\end{align}
we invoke \eqref{es-p-q-H2} and then infer from \eqref{Et1} and \eqref{Et1 smh} that
\begin{equation}\label{Eest sob}
  \begin{aligned}
  E(t) \leq  P(c_0^{-1}, M)E(0) + P(c_0^{-1}, M)\int_{0}^{t}[\,1+ \|v(\tau)\|_{W^{1,\infty}}\,](\,E(\tau) + \sqrt{E(\tau)}\,)\d \tau\\
  + P(c_0^{-1}, M)\int_{0}^{t}[\,1+ |\vb(\tau)|_{\dot{W}^{1,\infty}}\,](\,E(\tau) + \sqrt{E(\tau)}\,)\d \tau,
  \end{aligned}
\end{equation}
and
\begin{equation}\label{Eest smh}
  \begin{aligned}
  E(t) \leq  P(c_0^{-1}, M)E(0) + P(c_0^{-1}, M)\int_{0}^{t}[\,1+ \|v(\tau)\|_{W^{1,\infty}}\,](\,E(\tau) + \sqrt{E(\tau)}\,)\d \tau,
  \end{aligned}
\end{equation}
respectively. }

\blue{In light of the standard inequality $\sqrt{E} \lesssim 1+ E$, we arrive at
\begin{equation}\label{E sob}
\begin{aligned}
  E(t)+\sqrt{E(t)} \leq P(c_0^{-1}, M)E(0) + P(c_0^{-1}, M)\int_{0}^{t}[\,1+ \|v(\tau)\|_{W^{1,\infty}}\,](\,E(\tau) + \sqrt{E(\tau)}\,)\d \tau\\
  + P(c_0^{-1}, M)\int_{0}^{t}[\,1+ |\vb(\tau)|_{\dot{W}^{1,\infty}}\,](\,E(\tau) + \sqrt{E(\tau)}\,)\d \tau,
  \end{aligned}
\end{equation}
from \eqref{Eest sob}, 
and
\begin{equation}\label{E smh}
  E(t)+\sqrt{E(t)} \leq P(c_0^{-1}, M)E(0) + P(c_0^{-1}, M)\int_{0}^{t}[\,1+ \|v(\tau)\|_{W^{1,\infty}}\,](\,E(\tau) + \sqrt{E(\tau)}\,)\d \tau,
\end{equation}
from \eqref{Eest smh}. 
Lastly, since $\vbi\leq M$, $\forall t\in (0, T^*)$, Gr\"{o}nwall's inequality implies that from either \eqref{E sob} or \eqref{E smh}, we have
\begin{equation}\label{Et final}
  \begin{aligned}
  E(t)+\sqrt{E(t)} \leq & C(c_0^{-1}, M, E(0))\exp \Big( \int_0^t C(c_0^{-1}, M)(1+\|v(\tau)\|_{W^{1,\infty}})\,d\tau\Big).
  \end{aligned}
\end{equation}
holds $\forall t\in (0, T^*)$.}
This concludes the proof of Theorem \ref{thm-weak-CN-middle}.
\section{Proof of Theorem \ref{thm-strong-CN} }\label{sect. proof thm 1.2}
Parallel to the proof of Theorem \ref{thm-weak-CN}, i.e.,  we assume $T^*<+\infty$, and none of the conditions (a), (b'), and (c) hold in Theorem \ref{thm-strong-CN}. Our goal is to show:
\begin{theorem}\label{thm-strong-CN-middle}
Suppose $T^*<+\infty$, and
 there exist constants $M, c_0>0$, such that
\begin{align}
\sup_{t\in [0, T^*)} \K(t) \leq M, \\
\inf_{t\in [0,T^*)}\p_3 \vp(t) \geq c_0, \\
\inf_{t\in [0, T^*)} (b-|\psi(t)|_{\infty}) \geq c_0.
\end{align}
Then $\|v(t)\|_3$, $t\in [0, T^*)$ is bounded whenever the quantity $\int_0^t \|\omp(\tau)\|_{\infty}\d t$ remains finite.
\end{theorem}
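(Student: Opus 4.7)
The starting point is that all hypotheses of Theorem \ref{thm-weak-CN-middle} continue to hold, so the exponential bound \eqref{energy est after Gronwall} is at our disposal. To upgrade it into a proof that $\|v(t)\|_3$ stays finite on $[0,T^*)$ under the sole additional assumption $\int_0^{T^*}\|\omp(\tau)\|_\infty\,\d\tau<+\infty$, the plan is to replace $\|v(t)\|_{W^{1,\infty}}$ in the exponent of \eqref{energy est after Gronwall} by a quantity of the form $(1+\|\omp(t)\|_\infty)\log(e+E(t))$ and then close by an Osgood-type (log-Gronwall) argument. The usual log-type $L^\infty$-Calderon-Zygmund bound is valid on bounded, simply-connected $C^3$-domains for divergence-free fields that are tangent to the boundary (Lemma \ref{thm Ferrari}, cf.\ \cite{Ferrari-1993CMP}), so the first job is to reduce to such a field.

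To do so, first pass back to the moving domain via $u=v\circ\Phi^{-1}$, $\omega=\omp\circ\Phi^{-1}$; because $\Phi(t,\cdot)$ and its inverse have derivatives controlled by $c_0^{-1}$ and $|\psi|_{C^3}\leq M$, the norms $\|v\|_{W^{1,\infty}(\Omega)}$ and $\|u\|_{W^{1,\infty}(\DD_t)}$ (resp.\ $\|v\|_{H^3(\Omega)}$ and $\|u\|_{H^3(\DD_t)}$) are comparable. Next, decompose $u=V+\tilde u$ with $\tilde u:=\nabla\xi$, where $\xi$ solves the Neumann problem
\begin{equation*}
\Delta\xi=0\ \ \text{in}\ \DD_t,\qquad \nabla_N\xi=u\cdot N\ \ \text{on}\ \p\DD_{t,\t},\qquad \nabla_n\xi=0\ \ \text{on}\ \p\DD_{t,\b},
\end{equation*}
whose compatibility $\int_{\p\DD_t}u\cdot N=\int_{\DD_t}\nabla\cdot u=0$ is automatic from incompressibility. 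Then $V$ is divergence-free, has $\nabla\times V=\omega$, and verifies the slip conditions $V\cdot N=0$ on $\p\DD_{t,\t}$ and $V\cdot n=0$ on $\p\DD_{t,\b}$, so Lemma \ref{thm Ferrari} applies to $V$ and yields
\begin{equation*}
\|V\|_{W^{1,\infty}(\DD_t)}\leq C\bigl(1+\|\omega\|_{L^\infty(\DD_t)}\log(e+\|V\|_{H^3(\DD_t)})+\|V\|_{L^2(\DD_t)}\bigr),
\end{equation*}
with constant depending on the $C^3$-geometry of $\p\DD_t$, which is under control by \eqref{assump Kt}-\eqref{assump c_02}. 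The harmonic corrector $\tilde u$ is handled by standard elliptic regularity: the boundary data $u\cdot N=\p_t\psi$ is controlled by $|\psi_t|_{C^3}\subset\K_1(t)\leq M$, so $\|\tilde u\|_{W^{1,\infty}(\DD_t)}\leq P(c_0^{-1},M)$.

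Combining these estimates and using $\|V\|_{H^3(\DD_t)}\leq P(c_0^{-1},M)(1+\sqrt{E(t)})$, one obtains the key logarithmic bound
\begin{equation*}
\|v(t)\|_{W^{1,\infty}}\leq C(c_0^{-1},M)\bigl(1+\|\omp(t)\|_\infty\bigr)\bigl(1+\log(e+E(t))\bigr).
\end{equation*}
Plugging this into \eqref{energy est after Gronwall} and setting $G(t):=\log(e+E(t))$ produces a differential inequality of the form $G(t)\leq G(0)+\int_0^t C(c_0^{-1},M)(1+\|\omp(\tau)\|_\infty)G(\tau)\,\d\tau$, to which Gronwall's inequality gives $G(t)\leq G(0)\exp\bigl(\int_0^t C(1+\|\omp(\tau)\|_\infty)\,\d\tau\bigr)$. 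Hence the hypothesis $\int_0^{T^*}\|\omp\|_\infty<\infty$ forces $\sup_{t<T^*}E(t)<\infty$, contradicting the maximality of $T^*$. The main technical obstacle will be verifying the constants in the BKM-type bound on the moving domain uniformly in $t\in[0,T^*)$ (so that they depend only on $c_0^{-1}$ and $M$), and justifying that the decomposition $u=V+\nabla\xi$ preserves the $H^3$-regularity needed so that $\|V\|_{H^3(\DD_t)}$ is truly controlled by $E(t)$ plus the lower-order boundary quantities in $\K(t)$.
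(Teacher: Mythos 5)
Your proposal matches the paper's argument essentially verbatim: the same modified velocity decomposition $u=V+\nabla\xi$ with $\xi$ solving the Neumann problem, the same application of Ferrari's log-type $L^\infty$-Calderon-Zygmund estimate (Lemma \ref{thm Ferrari}) to the tangential field $V$, the same Schauder-type control of the harmonic corrector via the boundary data $u\cdot N=\psi_t\in C^{1,\gamma}$ (Lemma \ref{thm Nardi} in the paper, which you call ``standard elliptic regularity''), and the same log-Gronwall closure (the paper works with $\mathcal{F}(t)=e+\mathcal{C}\|v(t)\|_3$ rather than $G(t)=\log(e+E(t))$, a cosmetic difference). The only stylistic deviation is the extra $\|V\|_{L^2(\DD_t)}$ term you include in the BKM bound and the loosening of $\|\omega\|_{H^2}$ to $\|V\|_{H^3}$ inside the logarithm, both of which are harmless and indeed the paper performs the latter loosening immediately.
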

\subsection{Two Key Lemmas}
Particularly, since $|\psi(t)|_{C^3} \leq M$ for all $t\in [0, T^*)$, the results of \cite{Ferrari-1993CMP} suggest:
\begin{lemma}\label{thm Ferrari}
Let $U=U(t,y)$ be a smooth vector field defined on $\DD_t$, satisfying
\begin{equation}\label{equ U}
\begin{aligned}
\nab \cdot U=0, \quad& \text{in}\,\,\DD_t,\\
U\cdot N = 0,\quad& \text{on}\,\,\p\DD_{t,\t},\\
U\cdot n=0,\quad &\text{on}\,\,\p\DD_{t,\b}.
\end{aligned}
\end{equation}
Then
\begin{align}\label{ferrari}
\|U(t)\|_{W^{1,\infty}(\DD_t)} \leq C\left((1+\log^{+}\|\nab\times U(t)\|_{H^2(\DD_t)})\|\nab\times U(t)\|_{L^\infty(\DD_t)}+1\right),
\end{align}
holds for all $t\in [0, T^*)$.
Here,  $\log^+ f = \log f$ if $f\geq 1$, $\log^+ f=0$ otherwise.
\end{lemma}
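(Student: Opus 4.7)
The plan is to reduce the stated inequality, at each fixed $t \in [0, T^*)$, to the classical logarithmic Sobolev--Beale--Kato--Majda inequality in a bounded, simply-connected $C^3$-domain proved by Ferrari, and then to verify that the constant $C$ can be chosen independently of $t$. The assumption $\sup_{t\in[0,T^*)}\K(t) \le M$ forces $\sup_t |\psi(t)|_{C^3} \le M$, so the family $\{\DD_t\}_{t\in[0,T^*)}$ has uniformly bounded $C^3$-characteristic; together with \eqref{assump c_0} and \eqref{assump c_02}, each $\DD_t$ is a simply-connected $C^3$-domain whose geometric constants (measure, cone property, distance from $\p\DD_{t,\t}$ to $\p\DD_{t,\b}$) are controlled solely by $M$, $b$, and $c_0$.

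First, I would recover $U$ from $\Omega := \nabla \times U$ via a vector potential. Because $U$ is divergence-free on $\DD_t$ and satisfies $U\cdot N = 0$ on $\p\DD_{t,\t}$ and $U\cdot n=0$ on $\p\DD_{t,\b}$, with $\DD_t$ simply connected, Hodge theory yields $U = \nabla \times \Psi$ where $\Psi$ solves an elliptic system of the form $-\Delta \Psi = \Omega$ with compatible boundary conditions. Standard Calder\'on--Zygmund theory in a $C^3$-domain then gives, for every $p \in (1,\infty)$, the bound
\begin{equation*}
\|\nabla U\|_{L^p(\DD_t)} \le C(p, M, b, c_0)\bigl(\|\Omega\|_{L^p(\DD_t)} + \|U\|_{L^2(\DD_t)}\bigr),
\end{equation*}
and likewise an $H^s$-regularity estimate producing $\|\nabla U\|_{H^2(\DD_t)} \lesssim \|\Omega\|_{H^2(\DD_t)} + \|U\|_{L^2(\DD_t)}$. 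The logarithmic endpoint is then obtained, exactly as in Ferrari, by a Littlewood--Paley / frequency-cutoff decomposition of $\Omega$ at a scale $\Lambda$: the low-frequency piece contributes $\|\nabla U\|_{L^\infty} \lesssim (\log \Lambda)\|\Omega\|_{L^\infty}$ via the $L^\infty$-bound on the singular-integral representation of $\nabla U$, while the high-frequency piece is controlled by $\Lambda^{-s}\|\Omega\|_{H^{3/2+s}}$ for some $s > 0$. Optimizing in $\Lambda$ yields the $\log^+ \|\Omega\|_{H^2}$ factor in \eqref{ferrari}. The $L^\infty$-part of $\|U\|_{W^{1,\infty}}$ is handled by Sobolev embedding $W^{1,p}\hookrightarrow L^\infty$ for $p>3$ with a constant uniform in $t$.

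The hardest step, and the one needing the most care, is ensuring that \emph{every} constant appearing along the way -- the Calder\'on--Zygmund constant, the Hodge-decomposition constant, the Sobolev-embedding and trace constants, and the frequency-localization constants -- depends only on $M$, $b$, and $c_0$, and not on further details of $\DD_t$. A clean way to achieve this uniformity is to pull the whole analysis back through the diffeomorphism $\Phi(t,\cdot): \Omega \to \DD_t$ onto the fixed reference domain $\Omega$: the pulled-back Laplacian $-\triangle^\vp$ becomes a second-order elliptic operator whose coefficients and their derivatives are controlled in $C^2$ by $\vp$, and hence by $|\psi|_{C^3} \le M$ and $c_0^{-1}$ via \eqref{vp to psi}. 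All the classical estimates on $\Omega$ then hold with constants depending only on $M$, $b$, $c_0$, and pushing them forward via $\Phi(t,\cdot)$ gives \eqref{ferrari} uniformly on $[0, T^*)$.
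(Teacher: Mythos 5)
Your proposal is correct, but it does substantially more work than the paper does. The paper's proof of this lemma is a one-line citation: it invokes Proposition~1 and Corollary~1 of Ferrari \cite{Ferrari-1993CMP} directly, after observing that $\p\DD_{t,\t}\in C^3$ (from $|\psi(t)|_{C^3}\le M$) and $\p\DD_{t,\t}\cap\p\DD_{t,\b}=\emptyset$ (from $b-|\psi(t)|_\infty\ge c_0$). Uniformity of the constant in $t$ then comes for free because Ferrari's constant depends only on the $C^3$-norm of the boundary and the geometric separation, and these are controlled by $M$, $b$, $c_0$ uniformly on $[0,T^*)$. What you sketch --- Hodge/vector-potential recovery of $U$ from $\nab\times U$, Calder\'on--Zygmund $L^p$ and $H^s$ bounds, and the frequency-cutoff optimization giving the $\log^+$ endpoint --- is precisely the internal mechanism of Ferrari's proof, so you have opened the black box rather than used it. Your pullback-to-$\Omega$ device via $\Phi(t,\cdot)$ is a legitimate alternative route to $t$-uniformity and would be the natural choice if one insisted on a self-contained argument on a fixed reference domain; the paper avoids it by tracking the dependence of Ferrari's constant on the domain data.

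One caveat if you do re-prove the lemma from scratch along the lines you describe: $\DD_t\cong\T^2\times(-b,\psi(t,\cdot)]$ is \emph{not} simply connected (its first homology is $\Z^2$), so the representation $U=\nab\times\Psi$ must in principle be supplemented by a harmonic tangential field. The paper also elides this, but it is harmless: the harmonic vector fields tangent to $\p\DD_t$ on this slab-like periodic domain are the constant horizontal fields, which have vanishing gradient and whose $L^\infty$-size is controlled by $\|U\|_{L^2(\DD_t)}$ --- a quantity that in the application is a priori bounded and thus absorbed into the trailing ``$+1$'' in \eqref{ferrari}. A self-contained re-derivation should make that remark explicit; a citation of Ferrari with a comment on why the periodic slab setting causes no difficulty suffices otherwise.
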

\begin{proof}
The estimate \eqref{ferrari} follows from
\cite[Proposition 1 and Corollary 1]{Ferrari-1993CMP}, thanks to the fact that
\begin{equation}\label{fact}
\p \DD_{t,\t} \in C^3,\quad \p\DD_{t,\t}\cap \p\DD_{t,\b}=\emptyset, \quad \forall t\in [0, T^*).
\end{equation}
Here,  $\p \DD_{t,\t} \in C^3$ follows from $|\psi(t)|_{C^3}\leq M$, whereas $\p\DD_{t,\t}\cap \p\DD_{t,\b}=\emptyset$ is a direct consequence of $b-|\psi(t)|_{\infty}\geq c_0$.
 \end{proof}
 Furthermore, the following Schauder-type estimate also holds on $\DD_t$:
 \begin{lemma} \label{thm Nardi} Let $\xi$ be a smooth function defined on $\DD_t$ satisfying the boundary value problem:
 \begin{align}\label{xi}
\begin{aligned}
\triangle \xi = 0,\quad&\text{in}\,\,\,\DD_t,\\
N\cdot \nab \xi=\beta,\quad&\text{on}\,\,\,\p\DD_{t,\t},\\
n\cdot  \nab\xi = 0,\quad&\text{on}\,\,\,\p\DD_{t,\b},
\end{aligned}
 \end{align}
 where $\beta:\p\DD_t\to \R$ is a given smooth function. Then it holds that
\begin{equation}\label{nardi}
\|\xi\|_{C^{2,\gamma}(\DD_t)} \leq C|\beta|_{C^{1,\gamma}(\p\DD_{t,\t})}, \quad 0<\gamma<1.
\end{equation}
 \end{lemma}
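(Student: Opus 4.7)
\textbf{Proof Proposal for Lemma \ref{thm Nardi}.} The plan is to obtain \eqref{nardi} as a classical Schauder estimate for the mixed-Neumann elliptic problem \eqref{xi}, exploiting the fact that $\p\DD_{t,\t}$ is $C^{2,\gamma}$ (since $|\psi(t)|_{C^3}\leq M$), $\p\DD_{t,\b}$ is flat, and the two boundary components lie at positive distance $\geq c_0$ from each other throughout $[0,T^*)$. There are no corners, no tangential contact, and both surfaces have regularity above the $C^{2,\gamma}$ threshold required by Schauder theory, so the standard machinery applies.

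The most efficient route is to pull the problem back to the fixed reference domain $\Omega$ via $\Phi(t,\cdot)$. Setting $\tilde{\xi}:=\xi\circ\Phi$ and using the identities \eqref{eq-uvpq}--\eqref{op-diff-fix}, the system \eqref{xi} is equivalent to $\pp\cdot(\pp\tilde{\xi})=0$ in $\Omega$, $N\cdot\pp\tilde{\xi}=\tilde{\beta}$ on $\Gamma_\t$, and $\p_3\tilde{\xi}=0$ on $\Gamma_\b$, where $\tilde{\beta}:=\beta\circ\Phi|_{\Gamma_\t}$. The coefficients of this divergence-form operator are built from $\A$ in \eqref{A}, hence lie in $C^{1,\gamma}(\overline{\Omega})$ with norm controlled by $M$, and the ellipticity constant is controlled by $c_0^{-1}$ thanks to \eqref{p_3 varphi greater than 0}. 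Now $\Gamma_\t$ and $\Gamma_\b$ are flat and disjoint, so I would apply the standard interior and boundary Schauder theory for Neumann problems (e.g., Gilbarg--Trudinger, \S 6.7) via a finite cover of $\overline{\Omega}$ by interior balls and boundary half-balls around each component, obtaining
\begin{equation*}
\|\tilde{\xi}\|_{C^{2,\gamma}(\overline{\Omega})}\leq C(c_0^{-1},M,\gamma)\bigl(|\tilde{\beta}|_{C^{1,\gamma}(\Gamma_\t)}+\|\tilde{\xi}\|_{L^\infty(\Omega)}\bigr).
\end{equation*}
Undoing the change of variables yields \eqref{nardi}, modulo the $L^\infty$ term.

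The main obstacle is absorbing the $\|\xi\|_{L^\infty}$ term, since the Neumann problem determines $\xi$ only up to an additive constant. I would normalize $\xi$ by imposing $\int_{\DD_t}\xi\,dy=0$, which is legitimate because in Subsection \ref{subsect 2.5} the function $\xi$ enters only through $\nab\xi=\tilde{u}$, so an additive constant is harmless. Testing $\triangle\xi=0$ against $\xi$ and integrating by parts produces $\|\nab\xi\|_{L^2(\DD_t)}^2=\int_{\p\DD_{t,\t}}\beta\,\xi\,dS$, which combined with the trace and Poincar\'e inequalities yields $\|\xi\|_{H^1(\DD_t)}\leq C|\beta|_{L^2(\p\DD_{t,\t})}\leq C|\beta|_{C^{1,\gamma}}$; Sobolev embedding (or a bootstrap through $W^{2,p}$ for large $p$) then upgrades this to $\|\xi\|_{L^\infty(\DD_t)}\leq C|\beta|_{C^{1,\gamma}(\p\DD_{t,\t})}$, closing the estimate. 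The remaining delicate point is uniformity of all constants in $t\in[0,T^*)$, which is guaranteed precisely by the standing assumptions $|\psi(t)|_{C^3}\leq M$, $\p_3\vp(t)\geq c_0$, and $b-|\psi(t)|_{\infty}\geq c_0$ that entered Theorem \ref{thm-strong-CN-middle}.
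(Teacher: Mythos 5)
Your proposal is correct, but it follows a genuinely different route from the paper. The paper disposes of Lemma \ref{thm Nardi} in one line: it cites \cite[Theorem 4]{Nardi} directly on the Eulerian domain $\DD_t$, relying only on the fact \eqref{fact} that $\p\DD_{t,\t}\in C^3$ and $\p\DD_{t,\t}\cap\p\DD_{t,\b}=\emptyset$ uniformly in $t$, so the reference theorem's hypotheses are met. You instead reconstruct the Schauder estimate from scratch: pull back through $\Phi(t,\cdot)$ to the flat reference domain $\Omega$, observe that the resulting divergence-form operator and the oblique-derivative boundary condition on $\Gamma_\t$ have $C^{1,\gamma}$ coefficients controlled by $M$ and ellipticity controlled by $c_0^{-1}$, cover $\overline{\Omega}$ by interior balls and boundary half-balls near each disjoint boundary component, and invoke classical interior and boundary Schauder theory (Gilbarg--Trudinger \S 6.7). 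The paper's route is shorter but less self-contained, depending on the precise formulation in \cite{Nardi}; your route is longer but makes all constants transparently uniform in $t$, which is exactly what the application requires.

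Your proposal also addresses a point the paper's statement leaves implicit. As written, \eqref{nardi} cannot hold without a normalization, since the Neumann problem determines $\xi$ only up to an additive constant. You correctly observe that one should impose $\int_{\DD_t}\xi\,dy=0$, that this is harmless because $\xi$ enters the analysis only through $\nabla\xi=\tilde{u}$, and that the resulting $\|\xi\|_{L^\infty}$ term can then be absorbed by combining the energy identity $\|\nabla\xi\|_{L^2}^2=\int_{\p\DD_{t,\t}}\beta\,\xi\,dS$ with Poincar\'e, the trace inequality, and a $W^{2,p}$ bootstrap (since $H^1(\DD_t)$ does not embed in $L^\infty$ in three dimensions). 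Implicitly the compatibility condition $\int_{\p\DD_t}\beta\,dS=0$ is also needed; in the application $\beta=u\cdot N$ with $\nabla\cdot u=0$, so it holds, and in any case the lemma assumes a solution $\xi$ exists. This bookkeeping is sound, and is a worthwhile clarification of what the paper takes for granted by citing the external reference.
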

 \begin{proof}
Similar to the proof of Lemma \ref{thm Ferrari}, \eqref{nardi} follows from \cite[Theorem 4]{Nardi} and \eqref{fact}.
 \end{proof}

\subsection{The Eulerian Sobolev and H\"older Norms}
We prove in this subsection that the Eulerian Sobolev and H\"older norms can be transformed to the associated norms in the flat coordinates characterized by the diffeomorphism $\Phi(t,\cdot)$, as long as $\psi(t)\in C^3$.

The Eulerian Sobolev norm $\|\cdot\|_{H^s(\DD_t)}$ is defined via the Eulerian spatial derivatives $\nab_i=\pp_i$, $i=1,2,3$, defined in \eqref{op-diff-fix}.  Let $f:\DD_t\to \R$ be a generic smooth function.
We can see that
\begin{equation}\label{Eulerian Sobolev}
\|f\|_{H^s(\DD_t)} \leq P(c_0^{-1}, |\psi|_{C^3})\|f\circ\Phi(t,\cdot)\|_{s}\leq P(c_0^{-1},M)\|f\circ\Phi(t,\cdot)\|_{s},\quad s\leq 3.
\end{equation}
Similarly, there exists a constant $C=C(c_0^{-1}, M)>0$, such that
\begin{align}\label{comparable W1infty}
C^{-1}\|f\|_{W^{1,\infty}(\DD_t)}\leq \|f\circ \Phi(t,\cdot)\|_{W^{1,\infty}} \leq C\|f\|_{W^{1,\infty}(\DD_t)}.
\end{align}
In other words, $\|f\|_{W^{1,\infty}(\DD_t)}$ and $\|f\circ \Phi(t,\cdot)\|_{W^{1,\infty}}$ are comparable with each other.
Furthermore, the Eulerian H\"older norm $|\cdot|_{C^{1,\gamma}(\p\DD_{t,\t})}$ is defined through the Eulerian tangential spatial derivatives:
\begin{equation*}
\overline{\pp_i}:= \left(\pp_i - \frac{\N\cdot \pp}{|\N|^2}\N_i\right)\bigg|_{\p\DD_{t,\t}},\quad \text{with}\,\,\,i=1,2,3.
\end{equation*}
By a direct calculation, we obtain
\begin{equation*}
\frac{\N\cdot \pp}{|\N|^2} = -\frac{\p_1\vp\p_1+\p_2\vp\p_2}{1+|\TP\vp|^2}+\frac{1}{\p_3\vp}\p_3.
\end{equation*}
Thus, for $j=1,2$,
\begin{align*}
\overline{\pp_j}=\p_j-\frac{(\p_j\psi)(\p_1\psi\p_1+\p_2\psi\p_2)}{1+|\TP\psi|^2},
\end{align*}
as well as
\begin{align*}
\overline{\pp_3} = \frac{\p_1\psi\p_1+\p_2\psi\p_2}{1+|\TP\psi|^2}.
\end{align*}
Therefore, we conclude:
\begin{align}\label{Eulerian holder}
|\beta|_{C^{1,\gamma}(\p\DD_{t,\t})} \leq P(c_0^{-1}, |\psi|_{C^3}) |\beta\circ \Phi|_{C^{1,\gamma}}\leq P(c_0^{-1}, M)|\beta\circ \Phi(t,\cdot)|_{C^{1,\gamma}}.
\end{align}
\subsection{The Modified Velocity Field}
Let $\xi$ be defined by \eqref{xi} with $\beta=u\cdot N$.
We set
$$
\tu:=\nab \xi,
$$
and let
\begin{align}\label{modified velocity}
V=u-\tu
\end{align}
to be the modified velocity field. The construction of $U$ indicates that
\begin{align}\label{property U}
\begin{aligned}
\nab\cdot V=0, \quad \nab\times V=\omega\,(:=\nab\times u),\quad&\text{in}\,\,\DD_t,\\
V\cdot N=0,\quad V\cdot n=0,\quad& \text{on}\,\,\p\DD_{t,\t}\cup \p\DD_{t,\b}.
\end{aligned}
\end{align}
\subsection{Proof of Theorem \ref{thm-strong-CN-middle}}
We now invoke Lemma \ref{thm Ferrari} to obtain:
\begin{align}
\|V\|_{W^{1,\infty}(\DD_t)} \lesssim (1+\log^{+}\|\omega\|_{H^2(\DD_t)})\|\omega\|_{L^\infty(\DD_t)}+1
\lesssim \log(e+\|u\|_{H^3(\DD_t)})\|\omega\|_{L^\infty(\DD_t)}+1,
\end{align}
which implies
\begin{align}\label{U step 1}
\|u\|_{W^{1,\infty}(\DD_t)} \lesssim\log(e+\|u\|_{H^3(\DD_t)})\|\omega\|_{L^\infty(\DD_t)}+\|\tu\|_{W^{1,\infty}(\DD_t)}+1.
\end{align}
Here, in light of Lemma \ref{thm Nardi} and \eqref{Eulerian holder}, we have
\begin{align}
\begin{aligned}
\|\tu\|_{W^{1,\infty}(\DD_t)} &\leq \|\xi\|_{C^{2,\gamma}(\DD_t)}\leq C|u\cdot N|_{C^{1,\gamma}(\DD_t)}\\
&\leq P(c_0^{-1}, M)|v\cdot N|_{C^{1,\gamma}}=P(c_0^{-1},M)|\psi_t|_{C^{1,\gamma}} \leq C(c_0^{-1},M).
\end{aligned}
\end{align}
Now, thanks to \eqref{Eulerian Sobolev} and \eqref{comparable W1infty}, we deduce from \eqref{U step 1} that
\begin{align}\label{U step 2}
\|v\|_{W^{1,\infty}} \leq C(c_0^{-1},M)\left(\log\Big(e+C(c_0^{-1},M)\|v\|_{3}\Big)\|\omp\|_{\infty}+1\right).
\end{align}

Let $\mathcal{C}$ be a generic positive constant depends on $c_0^{-1}, M$, and $E(0)$.
Because \eqref{Et final} implies
\begin{equation}
  \begin{aligned}
 \|v(t)\|_3 \leq \mathcal{C}\exp \Big( \int_0^t \mathcal{C}(1+\|v(\tau)\|_{W^{1,\infty}})\,d\tau\Big),
  \end{aligned}
\end{equation}
we plug \eqref{U step 2} into the RHS and get
\begin{equation}\label{U step 3}
\begin{aligned}
\|v(t)\|_3 \leq \mathcal{C}\exp \Big( \int_0^t \mathcal{C} \left(1+\log(e+\mathcal{C}\|v(\tau)\|_3)\|\omp(\tau)\|_{\infty}\right) \,d\tau\Big).
\end{aligned}
\end{equation}
Let
$$
\mathcal{F}(t):= e+\mathcal{C}\|v(t)\|_3.
$$
Then we infer from \eqref{U step 3} that
\begin{equation}\label{F step 1}
\begin{aligned}
\log\mathcal{F}(t) \leq \log^+\mathcal{C}+\int_0^t \mathcal{C} \left(1+\|\omp(\tau)\|_{\infty}\log \mathcal{F}(\tau)\right) \,d\tau,
\end{aligned}
\end{equation}
which concludes the proof of the theorem.

\section{Remarks on Recovering the Regularity Loss in Theorems \ref{thm-weak-CN} and \ref{thm-strong-CN} with Modified Control Norms}\label{sect. lossless}
In Theorems \ref{thm-weak-CN} and \ref{thm-strong-CN}, we require the solution $(v(t), \psi(t))\in H^s(\Omega)\times H^{s+1}(\Gamma_{\t})$, $s>\frac{9}{2}$, but the space of continuation is merely $H^3(\Omega)\times H^4(\Gamma_{\t})$. This regularity loss is caused by the control norms in $\K(t)$, in particular, $|\psi_t|_{C^3} = |v\cdot N|_{C^3}$ cannot be controlled by $E(t)$.  Nevertheless, the double linear estimate \eqref{strategy energy} remains valid by replacing $\K(t)$ by
\blue{\begin{equation}\label{KM}
\begin{aligned}
\KM(t) &:= \KM_1(t)+\K_2(t),\\
\KM_1(t) &= |\psi(t)|_{C^3} + |\psi_t(t)|_{C^2}+|\psi_t(t)|_3 + |\psi_{tt}(t)|_{1.5}. 
\end{aligned}
\end{equation}}
In other words, we replace $|\psi_t|_{C^3}$ in $\K(t)$ by $|\psi_t|_{C^2}+|\psi_t|_3$. It is straightforward to see that both $|\psi_t|_{C^2}$ and $|\psi_t|_3$ reduces to $0$ if $\psi_t=v\cdot N=0$ on $\Gamma_{\t}$. This indicates that the reduction in Remark \ref{kt last} remains valid.

By repeating the analysis in Section \ref{sect. thm 1.1} with $\KM(t)$, we obtain
\blue{\begin{align}\label{strategy energy'}
 E(t)+\sqrt{E(t)}\leq P(c_0^{-1}, \KM_1(t))E(0) &+ \int_0^t P(c_0^{-1}, \KM_1(\tau))\left([\,1+ \|v(\tau)\|_{W^{1,\infty}}\,][\,1+ |\vb(\tau)|_{\infty}\,](\,E(\tau) + \sqrt{E(\tau)}\,)\right)\d \tau\nonumber\\
 &+ \int_0^t P(c_0^{-1}, \KM_1(\tau))\left([\,1+ |\vb(\tau)|_{W^{1,\infty}}\,](\,E(\tau) + \sqrt{E(\tau)}\,)\right)\d \tau,
\end{align}
where the second line  drops if $\vb(t)$, $\p_1 \vb(t)$, and $\p_2 \vb(t)$ are continuous on $\Omega$.}

Next, we prove that all quantities in $\KM(t)$ can be controlled by the energy that ties to the local existence, i.e.,
$$
E_{\text{exist}}(t)= \sum_{k=0}^{3}\left(\|\p_t^k v(t)\|_{3-k}^2+\sigma |\p_t^k\psi|_{4-k}^2\right).
$$
\begin{theorem}\label{thm-KM}
Let $E_{\text{exist}}(t)$ be defined as above.
For fixed $t\geq 0$ such that $E_{\text{exist}}(t)<+\infty$, it holds that
\begin{equation}\label{KM control}
\KM(t) \leq  P(E_{\text{exist}}(t)),
\end{equation}
provided that $\psi\in C^{1,\gamma}(\Gamma_{\t})$.
\end{theorem}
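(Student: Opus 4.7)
The plan is to bound every summand of $\KM(t)$ individually. The Sobolev pieces are immediate: $|\psi_t(t)|_3$ and $|\psi_{tt}(t)|_{1.5}$ come straight out of the $k=1$ and $k=2$ terms of $E_{\text{exist}}$; $|\vb(t)|_\infty$ and (after time integration) $\vbi$ follow from $H^3(\Omega)\hookrightarrow W^{1,\infty}(\Omega)$ in $\R^3$ applied to $v$. So the entire difficulty lies in the two H\"older norms $|\psi(t)|_{C^3}$ and $|\psi_t(t)|_{C^2}$, because Sobolev embedding on $\T^2$ only yields $H^4\hookrightarrow C^{2,\gamma}$ and $H^3\hookrightarrow C^{1,\gamma}$, losing exactly one derivative in each case. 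The idea is to recover that missing derivative from the quasilinear elliptic nature of the surface-tension boundary condition.

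For $|\psi|_{C^3}$, I would read the boundary identity $\sigma\,\TP\cdot\!\bigl(\TP\psi/|N|\bigr)=-q|_{\Gamma_\t}$ as a uniformly elliptic quasilinear PDE for $\psi$ on $\T^2$. Since $\psi\in H^4\hookrightarrow C^{2,\gamma}$, its principal coefficients $a_{ij}(\TP\psi)=\delta_{ij}/|N|-\p_i\psi\,\p_j\psi/|N|^3$ are in $C^{1,\gamma}$. What is needed is $q|_{\Gamma_\t}\in C^{1,\gamma}$. The first step is therefore to estimate $\|q\|_3$ from the Neumann problem \eqref{eqs-q-Neu-BC}: since the coefficients of $-\triangle^\vp$ inherit the regularity of $\vp\in H^4$, standard elliptic regularity gives
\[
\|q\|_3\;\lesssim\;\|(\pp v)^T{:}\,\pp v\|_1+|N\cdot\pp q|_{1.5}+\|q\|_0\;\leq\;P(E_{\text{exist}}(t)),
\]
where the source bound uses that $\pp v\in H^2(\Omega)$ and $H^2$ is an algebra in $\R^3$, and the Neumann data bound uses $v\in H^3$, $\psi_{tt}\in H^2$, together with the algebra property of $H^{1.5}(\T^2)$. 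Consequently $q|_{\Gamma_\t}\in H^{2.5}(\T^2)\hookrightarrow C^{1,\gamma}$. Differentiating the surface-tension equation tangentially gives a linear elliptic PDE for $\p_k\psi$ with $C^{1,\gamma}$ coefficients and $C^{0,\gamma}$ right-hand side, so the classical Schauder estimate yields $|\psi|_{C^{3,\gamma}}\leq P(E_{\text{exist}}(t))$.

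For $|\psi_t|_{C^2}$ the template repeats, one time-derivative higher. Differentiating the surface-tension condition in $t$ produces a linear elliptic equation
\[
\sigma\,a_{ij}(\TP\psi)\,\p_i\p_j\psi_t+(\text{lower-order in }\psi_t)=-q_t\big|_{\Gamma_\t},
\]
whose top-order coefficients now live in $C^{1,\gamma}$ thanks to the bound on $|\psi|_{C^3}$ just obtained. To control $q_t|_{\Gamma_\t}$ in $C^{0,\gamma}$, I would time-differentiate \eqref{eqs-q-Neu-BC} to get a Neumann problem for $q_t$: the source splits into $2(\pp v)^T{:}\pp v_t$ plus commutators $[\p_t,\triangle^\vp]q$, bounded in $L^2$ using $v\in H^3$, $v_t\in H^2$, and $\p_t\vp$ derived from $\psi_t\in H^3$; the Neumann data contains $\p_t^3\psi\in H^1$, which is precisely the $k=3$ ingredient of $E_{\text{exist}}$. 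Elliptic regularity then yields $\|q_t\|_2\leq P(E_{\text{exist}}(t))$, whence $q_t|_{\Gamma_\t}\in H^{1.5}(\T^2)\hookrightarrow C^{0,\gamma}$, and one final application of Schauder delivers $|\psi_t|_{C^{2,\gamma}}\leq P(E_{\text{exist}}(t))$, completing \eqref{KM control}.

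The principal obstacle is strictly bookkeeping: one must verify at each step that the constants in the elliptic estimates depend only on the Sobolev data of $\vp$ already furnished by $E_{\text{exist}}$, and, in the second step, on the H\"older bound for $\psi$ produced one step earlier --- never on the quantity being estimated. The hypothesis $\psi\in C^{1,\gamma}$ is invoked only to place the quasilinear mean-curvature operator in the Schauder class so that the initial bootstrap can be launched; once underway, $\psi\in C^{2,\gamma}$ (from $H^4$) takes over and the argument becomes self-contained.
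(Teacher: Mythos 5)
Your proposal is correct and follows the same core strategy as the paper: read the surface-tension boundary condition as a uniformly elliptic (quasilinear) equation for $\psi$ on $\T^2$, differentiate it (tangentially, then in time), and apply Schauder estimates, with the needed H\"older control of the right-hand side coming from Sobolev bounds on $q$ and $q_t$. The easy Sobolev pieces are handled identically.

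The two places you diverge are minor but worth flagging. First, where the paper controls $\|q\|_2$, $\|q\|_3$, and $\|q_t\|_2$ by citing an external elliptic estimate (\cite[Prop.\,3.1]{GuLuoZhang}), you propose to derive them directly from the Neumann problem \eqref{eqs-q-Neu-BC} and its time-differentiated version. This is legitimate, but it is precisely the delicate point: the paper's own Neumann estimate \eqref{es-p-q-H2} has constants $P(|\psi|_{C^3})$ and would be circular here, so you must use an elliptic estimate whose constants depend only on Sobolev norms of $\vp$ (i.e.\ on $\|\psi\|_4$ and $c_0^{-1}$), not on $|\psi|_{C^3}$. You acknowledge this ``bookkeeping'' concern at the end, which is exactly the issue the citation is meant to sidestep. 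Second, you launch the Schauder bootstrap directly from $H^4(\T^2)\hookrightarrow C^{2,\gamma}$, whereas the paper first does a $C^{1,\gamma}\to C^{2,\gamma}$ Schauder step using the stated hypothesis $\psi\in C^{1,\gamma}$ before taking tangential derivatives. Your shortcut is sound since $E_{\text{exist}}(t)<+\infty$ already forces $\psi(t)\in H^4$; it renders the paper's $C^{1,\gamma}$ hypothesis redundant and saves one application of Schauder.
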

\begin{proof}
First, with the help of the standard Sobolev inequalities,  it is clear that $|\psi_t|_{3}$, $|\psi_{tt}|_{1.5}$, and $|\vb|_{W^{1,\infty}}$ are bounded by $E_{\text{exist}}$. Second, by rewriting the boundary condition of $q$ as
\begin{align}\label{q ell}
-\TP \cdot \left(\frac{\TP \psi}{|N|}\right) = \sigma^{-1} q,
\end{align}
and then applying the standard Schauder estimate, we have
\begin{align}\label{boost}
|\psi|_{C^{2,\gamma}}\leq C(\sigma^{-1}, \gamma, |\psi|_{C^{1,\gamma}})\left( |\psi|_{C^0}+|q|_{C^{0,\gamma}}\right).
\end{align}
Here, $|q|_{C^{0,\gamma}} \lesssim \|q\|_{1.5+\gamma+\delta}$ for some $\delta>0$, thanks to the standard Sobolev inequalities. In addition to this, when $\gamma+\delta<\frac{1}{2}$,  we infer from \cite[Proposition 3.1]{GuLuoZhang} (with $\mathbf{b}_0=0$ therein) that $\|q\|_{1.5+\gamma+\delta}\leq \|q\|_2 \leq P(E_{\text{exist}})$.

Third, by taking $\p_\tau$ with $\tau=1,2$ to \eqref{q ell}, we obtain
$$
-\TP \cdot \left( \frac{\TP \p_\tau \psi}{|N|} - \frac{\TP \psi\cdot \TP \p_\tau \psi}{|N|^3}\TP\psi\right) =\sigma^{-1} \p_\tau q.
$$
 Since \eqref{boost} implies $\psi\in C^{2,\gamma}(\Gamma_{\t})$,
 the standard Schauder estimate yields
\begin{align}
|\TP\psi|_{C^{2, \gamma}} \leq C(\sigma^{-1}, \gamma, |\psi|_{C^{2,\gamma}})\left(|\TP \psi|_{C^0} + |q|_{C^{1,\gamma}}\right),
\end{align}
where $|q|_{C^{1,\gamma}} \lesssim \|q\|_{2.5+\gamma+\delta}$ for some $\delta>0$, and $\|q\|_{2.5+\gamma+\delta}\leq \|q\|_3 \leq P(E_{\text{exist}})$ whenever $\gamma+\delta<\frac{1}{2}$.

Finally, we treat $|\psi_t|_{C^2}$ by a similar argument. Indeed, since $\psi_t$ verifies
$$
-\TP \cdot \left( \frac{\TP \psi_t}{|N|} - \frac{\TP \psi\cdot \TP\psi_t}{|N|^3}\TP\psi\right) =\sigma^{-1} q_t,
$$
and thus Schauder estimate yields
\begin{align}
|\psi_t|_{C^2} \leq |\psi_t|_{C^{2, \gamma}} \leq C(\sigma^{-1}, \gamma, |\psi|_{C^{2,\gamma}})\left( |\psi_t|_{C^0} + |q_t|_{C^{0,\gamma}}\right).
\end{align}
Again, we invoke the Sobolev inequalities and then \cite[Proposition 3.1]{GuLuoZhang} to control $|q_t|_{C^{0,\gamma}}$ by $\|q_t\|_{1.5+\gamma+\delta}\leq \|q_t\|_2\leq  P(E_{\text{exist}})$. Also, standard Sobolev inequalities imply $|\psi_t|_{C^0} = |v\cdot N|_{C^0} \leq P(E_{\text{exist}})$.
\end{proof}

Also, thanks to \eqref{strategy energy'}, we can adapt the arguments in Subsection \ref{sect. proof thm 1.1} and Section \ref{sect. proof thm 1.2} to show:
\begin{theorem}\label{thm-lossless}
 Let $(v(t),\psi(t))\in H^{3}(\Omega)\times H^{4}(\Gamma_{\t})$ be the solution of \eqref{eqs-incomp-Euler-fix}. Let
  \begin{align*}
    T^* = \sup\left\{T>0\,\big|\,(v(t),\psi(t)) \text{ can be continued in the class}\;C([0,T]; H^3(\Omega)\times H^{4} (\Gamma_{\t}))\right\}.
  \end{align*}
   If $T^*<+\infty$, then at least one of the following three  statements hold:
  \begin{itemize}
  \item [a'.] \begin{equation}
    \lim_{t\nearrow T^*}\KM(t) = + \infty,
  \end{equation}
  \item [b'.]   \begin{equation}
    \int_{0}^{T^*}\|\omp(t)\|_{L^\infty}\d t = +\infty,
  \end{equation}
  \item [c.]\begin{equation}
  \lim_{t\nearrow T^*} \left (\frac{1}{\p_3 \vp(t)} + \frac{1}{b-|\psi(t)|_{\infty}} \right)=+\infty,
  \end{equation}
  or turning occurs on the moving surface boundary.
  \end{itemize}
   \blue{Also, parallel to Theorem \ref{thm-strong-CN},  if $\vb(t)$, $\p_1 \vb(t)$, and $\p_2 \vb(t)$ are continuous on $\Omega$, then $\vbi$ in $\KM(t)$ can be dropped. }
\end{theorem}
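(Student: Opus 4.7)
The plan is to prove Theorem \ref{thm-lossless} by contradiction, in exact parallel with the proofs of Theorems \ref{thm-weak-CN} and \ref{thm-strong-CN}, but using the modified control norm $\KM(t)$ in place of $\K(t)$. Specifically, I would assume $T^*<+\infty$ and that none of (a'), (b'), (c) occurs, which provides constants $M, c_0>0$ such that
\begin{equation*}
\sup_{t\in[0,T^*)}\KM(t)\leq M,\quad \inf_{t\in[0,T^*)}\p_3\vp(t)\geq c_0,\quad \inf_{t\in[0,T^*)}\big(b-|\psi(t)|_{\infty}\big)\geq c_0,
\end{equation*}
together with $\int_0^{T^*}\|\omp(t)\|_{\infty}\,dt<+\infty$. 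The goal is to show that $E(t)=\|v(t)\|_3^2+\sigma|\psi(t)|_4^2$ remains bounded on $[0,T^*)$, which contradicts the definition of $T^*$.

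The first step is to upgrade \eqref{strategy energy'} to a Gr\"onwall-ready estimate, which is the direct analog of Theorem \ref{thm-weak-CN-middle} with $\K_1$ replaced by $\KM_1$. Since the paper states that the entire analysis of Section \ref{sect. thm 1.1} goes through verbatim with $\KM(t)$ in place of $\K(t)$, I would simply record that at every occurrence of $P(c_0^{-1},|\psi|_{C^3},|\psi_t|_{C^3})$ in the tangential, divergence–curl, vorticity, and elliptic estimates, the quantity $|\psi_t|_{C^3}$ appears only through Sobolev multiplications such as $|\psi_t|_{C^3}|\sqrt{\sigma}\TP^{k+1}\psi|_0$, $|\p_t\TP^\alpha\psi|_\infty$, and in the control of commutators like $\widetilde{R^3}(v)$, and in each of these places $|\psi_t|_{C^2}+|\psi_t|_3$ suffices (using the $H^3\hookrightarrow C^2$ embedding on $\Gamma_{\t}$ together with the $H^{-1/2}\text{–}H^{1/2}$ duality for the top-order commutators, e.g., in $ST_2+ST_3+ST_4$ and in $I_{13}$). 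This produces the bound
\begin{equation*}
E(t)+\sqrt{E(t)}\leq C(c_0^{-1},M,E(0))\exp\!\left(\int_0^t C(c_0^{-1},M)(1+\|v(\tau)\|_{W^{1,\infty}})\,d\tau\right),\quad t\in[0,T^*),
\end{equation*}
and its smoother counterpart when $\vb$ and $\TP\vb$ are continuous on $\Omega$.

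The second step is to bootstrap from $\|v\|_{W^{1,\infty}}$ to $\|\omp\|_{\infty}$, repeating Section \ref{sect. proof thm 1.2} with no essential change. Since $|\psi(t)|_{C^3}\leq M$ and $b-|\psi(t)|_{\infty}\geq c_0$, the fluid domain $\DD_t$ is a bounded, simply connected $C^3$-domain, and Lemma \ref{thm Ferrari} applies to the modified velocity field $V=u-\tu$, where $\tu=\nab\xi$ with $\xi$ solving the Neumann problem \eqref{xi} for $\beta=u\cdot N$. Lemma \ref{thm Nardi} together with the estimate $|v\cdot N|_{C^{1,\gamma}}=|\psi_t|_{C^{1,\gamma}}\leq C(M)$ (which is controlled by $\KM_1$ via $|\psi_t|_{C^2}$ and interpolation with $|\psi_t|_3$) yields $\|\tu\|_{W^{1,\infty}(\DD_t)}\leq C(c_0^{-1},M)$. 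Combining these with \eqref{Eulerian Sobolev}–\eqref{comparable W1infty} gives the logarithmic bound $\|v\|_{W^{1,\infty}}\leq C(c_0^{-1},M)\big(\log(e+C\|v\|_3)\|\omp\|_{\infty}+1\big)$, and plugging this into the exponential bound above leads to
\begin{equation*}
\log\mathcal{F}(t)\leq \log^{+}\mathcal{C}+\int_0^t\mathcal{C}\bigl(1+\|\omp(\tau)\|_{\infty}\log\mathcal{F}(\tau)\bigr)\,d\tau,\quad \mathcal{F}(t):=e+\mathcal{C}\|v(t)\|_3,
\end{equation*}
from which Gr\"onwall's inequality yields $\sup_{[0,T^*)}\|v(t)\|_3<+\infty$, contradicting the definition of $T^*$.

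The only non-routine point I anticipate is the replacement of $|\psi_t|_{C^3}$ by $|\psi_t|_{C^2}+|\psi_t|_3$ in the tangential energy estimate. Concretely, in the boundary error \eqref{strategy error} and in $ST_2+ST_3+ST_4$ one must redistribute derivatives so that $\p_t\TP^\alpha\psi$ is paired against a factor of $\TP$-derivatives that can be absorbed by $|\psi_t|_3$ (after integration by parts in tangential variables) rather than placed in $L^\infty$; similarly for the commutator $\widetilde{R^3}(v)$ appearing in $I_3$, where $[\TP^\alpha,\vb]\cdot\TP v$ and the analogous terms involving $v\cdot\N-\p_t\vp$ should be handled with $|\psi_t|_{C^2}$ controlling the low-derivative factors and $|\psi_t|_3$ (via trace and Sobolev multiplier estimates) controlling the top-derivative ones. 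Once these bookkeeping adjustments are verified, the rest of the argument proceeds mechanically, and the final claim about dropping $\vbi$ when $\vb,\p_1\vb,\p_2\vb$ are continuous on $\Omega$ follows exactly as in Theorem \ref{thm-strong-CN} by appealing to the ``smoother'' variant of \eqref{strategy energy'}.
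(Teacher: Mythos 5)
Your proposal follows the paper's strategy exactly: replace $\K(t)$ by $\KM(t)$, rerun the Gr\"onwall estimate of Theorem \ref{thm-weak-CN-middle}, and then repeat the Ferrari/Schauder bootstrap from Section \ref{sect. proof thm 1.2}. This is what the paper does (it essentially asserts that the analysis of Sections \ref{sect. thm 1.1} and \ref{sect. proof thm 1.2} carries over with $\KM$, and additionally records Theorem \ref{thm-KM} to justify the ``lossless'' claim, which is separate from the breakdown argument itself). One caveat: your parenthetical appeal to the embedding $H^3(\Gamma_{\t})\hookrightarrow C^2(\Gamma_{\t})$ is false — on the two-dimensional boundary, $H^3$ only embeds into $C^{1,\gamma}$ for $\gamma<1$, not $C^2$. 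This is why the paper keeps $|\psi_t|_{C^2}$ as a \emph{separate} control norm in $\KM_1$ rather than subsuming it into $|\psi_t|_3$; since you also retain $|\psi_t|_{C^2}$ explicitly, the error is incidental and does not break your argument. The genuine adaptations — replacing the $L^\infty$ bound $|\p_t\TP^\alpha\psi|_\infty$ in $ST_2+ST_3+ST_4$ by an integration-by-parts/duality argument hitting $|\psi_t|_3$, and observing that the remaining occurrences of $\psi_t$ in $I_3$, $RST$, and $\widetilde{R^3}(v)$ already only need $|\psi_t|_{C^2}$ or lower — are correctly identified, and the final step dropping $\vbi$ when $\vb,\TP\vb$ are continuous mirrors Theorem \ref{thm-strong-CN} as it should.
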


We regard Theorem \ref{thm-lossless} as a generalized Beale-Kato-Majda-type breakdown criterion without regularity loss. Specifically, for $(v(t), \psi(t))\in H^3(\Omega)\times H^4(\Gamma_{\t})$, the control norms in $\KM(t)$ remains to be lossless as long as $E_{\text{exist}}(t)$ is finite.

\begin{appendix}
\section{The Reynold Transport Theorems}
\begin{lemma}\label{lem-A1}
  Let $f,\, g$ be smooth functions defined on $[0,T]\times \Omega$. Then there holds that
  \begin{equation}\label{eq-A1}
    \frac{d}{dt} \int_{\Omega} f g \p_3\vp \d x = \int_{\Omega} (\p_t^\vp f)g \p_3\vp \d x + \int_{\Omega} f(\p_t^\vp g) \p_3\vp \d x + \int_{\Gamma_\t}fg \p_t \psi \d x'.
  \end{equation}
\end{lemma}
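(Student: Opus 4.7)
The plan is a direct computation that moves the $t$-derivative inside the integral, expresses the ordinary $\p_t$ in terms of the covariant $\p_t^\vp$ using the defining relation $\p_t = \p_t^\vp + \frac{\p_t\vp}{\p_3\vp}\p_3$ from \eqref{op-diff-fix}, and then recognizes the leftover pieces as a total $\p_3$-derivative that produces the boundary term via the fundamental theorem of calculus.

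First, differentiating under the integral sign (the domain $\Omega$ is fixed) gives
\begin{equation*}
  \ddt \int_{\Omega} fg\,\p_3\vp\,\d x = \int_{\Omega}(\p_t f)g\,\p_3\vp\,\d x + \int_{\Omega} f(\p_t g)\,\p_3\vp\,\d x + \int_{\Omega} fg\,\p_3\p_t\vp\,\d x.
\end{equation*}
Next, substituting $\p_t h = \p_t^\vp h + \tfrac{\p_t\vp}{\p_3\vp}\p_3 h$ for $h=f$ and $h=g$ converts the first two terms into the desired $\int_{\Omega}(\p_t^\vp f)g\,\p_3\vp + \int_{\Omega} f(\p_t^\vp g)\,\p_3\vp$, plus a residual
\begin{equation*}
  \int_{\Omega}\p_t\vp\,\bigl(g\,\p_3 f + f\,\p_3 g\bigr)\,\d x + \int_{\Omega} fg\,\p_3\p_t\vp\,\d x = \int_{\Omega}\p_3\bigl(fg\,\p_t\vp\bigr)\,\d x.
\end{equation*}

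Finally, the residual collapses by the one-dimensional fundamental theorem of calculus in $x_3$:
\begin{equation*}
  \int_{\Omega}\p_3\bigl(fg\,\p_t\vp\bigr)\,\d x = \int_{\Gamma_\t} fg\,\p_t\vp\,\d x' - \int_{\Gamma_\b} fg\,\p_t\vp\,\d x'.
\end{equation*}
The cut-off $\chi$ satisfies $\chi(0)=1$ and $\chi(-b)=0$ (since $\chi\in C_0^\infty(-b,0]$ with $\chi\equiv 1$ near $0$ by \eqref{ineq-chi}), so from \eqref{eq-phi} we have $\p_t\vp|_{\Gamma_\t} = \p_t\psi$ and $\p_t\vp|_{\Gamma_\b}=0$. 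This yields exactly the boundary term $\int_{\Gamma_\t} fg\,\p_t\psi\,\d x'$ in \eqref{eq-A1}.

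There is no real obstacle here; the only subtlety is bookkeeping the two cancellations (using the identity $g\p_3 f + f\p_3 g = \p_3(fg)$ to merge the residual into a single exact $\p_3$-derivative, and using the boundary values of $\chi$ to kill the bottom contribution). All manipulations are justified by smoothness of $f$, $g$, and $\vp$.
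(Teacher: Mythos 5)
Your proof is correct and follows essentially the same route as the paper's: differentiate under the integral, rewrite $\p_t$ in terms of $\p_t^\vp$ via \eqref{op-diff-fix}, and eliminate the residual using the boundary values $\p_t\vp|_{\Gamma_\t}=\p_t\psi$ and $\p_t\vp|_{\Gamma_\b}=0$. The only cosmetic difference is that you package the three leftover integrals as a single exact $\p_3$-derivative and apply the fundamental theorem of calculus, whereas the paper integrates by parts in one of the two residual terms and lets the resulting pieces cancel -- an algebraically equivalent bookkeeping.
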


\begin{proof}
  We exchange $\p_t$ and the integral in $\p_t \int_{\Omega} f g \p_3\vp \d x$ and use the definition \eqref{op-diff-fix} of $\p_t^\vp$ to get
  \begin{equation*}
    \begin{aligned}
    \frac{d}{dt} \int_{\Omega} f g \p_3\vp \d x = & \int_{\Omega} (\p_t f) g \p_3\vp \d x + \int_{\Omega} f(\p_t g) \p_3\vp \d x + \int_{\Omega} f g \p_t \p_3\vp \d x\\
    = & \int_{\Omega} (\p_t^\vp f) g \p_3\vp \d x + \int_{\Omega} f(\p_t^\vp g) \p_3\vp \d x + \int_{\Omega} f g \p_t \p_3\vp \d x\\
    & \qquad + \underbrace{\int_{\Omega} \p_t \vp \p_3 f g\d x}_{A} + \underbrace{\int_{\Omega} \p_t \vp f \p_3 g\d x}_{B}.
    \end{aligned}
  \end{equation*}
  Since $\p_t\vp|_{\Gamma_{\b}}=0$,
   we integrate $\p_3$ in B by parts to give us
  \begin{equation*}
    b = \int_{\Gamma_\t}fg \p_t \psi \d x' - \int_{\Omega} f g \p_t \p_3\vp \d x - A,
  \end{equation*}
  which concludes the proof of \eqref{eq-A1}.
\end{proof}
\begin{lemma}\label{lem-A2}
  Let $f,\, g$ be defined as in Lemma \ref{lem-A1}. Then there holds that for $i=1,2$:
  \begin{equation}\label{eq-A2}
    \int_{\Omega}\left(\p_i^\vp f\right)g \p_3\vp\d x = -\int_{\Omega}f\left(\p_i^\vp g\right) \p_3\vp\d x + \int_{\Gamma_\t}fg N_i \d x'.
  \end{equation}
  Furthermore, if $g |_{\Gamma_\b}=0$, then
  \begin{equation}\label{eq-A2-1}
    \int_{\Omega}\left(\p_3^\vp f\right)g \p_3\vp\d x = -\int_{\Omega}f\left(\p_3^\vp g\right) \p_3\vp\d x + \int_{\Gamma_\t}fg N_3 \d x'.
  \end{equation}
\end{lemma}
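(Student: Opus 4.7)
The plan is to reduce both identities to ordinary integration by parts on the fixed reference domain $\Omega$ after rewriting the $\vp$-derivative integrands in divergence form. The key algebraic identity is that, for $i=1,2$,
\begin{equation*}
(\p_i^\vp f)\,\p_3\vp = \p_i f\,\p_3\vp - \p_i\vp\,\p_3 f = \p_i\bigl(f\,\p_3\vp\bigr) - \p_3\bigl(f\,\p_i\vp\bigr),
\end{equation*}
because the two ``cross'' terms $f\,\p_i\p_3\vp$ cancel. This turns the weighted integrand on the left-hand side of \eqref{eq-A2} into a sum of pure $\p_i$- and $\p_3$-derivatives, after which standard integration by parts can be applied.

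First I would substitute the identity above into $\int_\Omega (\p_i^\vp f)g\,\p_3\vp\,\d x$ and integrate by parts separately in each of $\p_i$ and $\p_3$. Since $\Omega$ is periodic in $x':=(x_1,x_2)$, the $\p_i$-integration produces no boundary contribution and yields $-\int_\Omega f\,\p_3\vp\,\p_i g\,\d x$. The $\p_3$-integration gives $\int_\Omega f\,\p_i\vp\,\p_3 g\,\d x$ plus boundary terms at $x_3=0$ and $x_3=-b$; by the cutoff condition $\chi\in C_0^\infty(-b,0]$ in \eqref{ineq-chi} we have $\chi(-b)=0$, so $\p_i\vp\big|_{\Gamma_\b}=\chi(-b)\p_i\psi=0$ and the bottom contribution vanishes, while on $\Gamma_\t$ one has $\chi(0)=1$, hence $\p_i\vp\big|_{\Gamma_\t}=\p_i\psi=-N_i$. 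Recombining the two interior pieces as
\begin{equation*}
-\int_\Omega f\,\p_3\vp\,\p_i g\,\d x + \int_\Omega f\,\p_i\vp\,\p_3 g\,\d x = -\int_\Omega f\bigl(\p_i g-\tfrac{\p_i\vp}{\p_3\vp}\p_3 g\bigr)\p_3\vp\,\d x = -\int_\Omega f(\p_i^\vp g)\,\p_3\vp\,\d x,
\end{equation*}
and using $-\p_i\psi = N_i$ on $\Gamma_\t$, one recovers \eqref{eq-A2}.

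For \eqref{eq-A2-1} the argument is even shorter: by definition $\p_3^\vp f = \frac{1}{\p_3\vp}\p_3 f$, so the weight $\p_3\vp$ cancels and $\int_\Omega (\p_3^\vp f)g\,\p_3\vp\,\d x=\int_\Omega (\p_3 f)g\,\d x$. A direct $\p_3$-integration by parts produces $-\int_\Omega f\,\p_3 g\,\d x + \int_{\Gamma_\t} fg\,\d x' - \int_{\Gamma_\b} fg\,\d x'$; the bottom term vanishes under the assumption $g|_{\Gamma_\b}=0$, the interior term is rewritten as $-\int_\Omega f(\p_3^\vp g)\p_3\vp\,\d x$ by the same cancellation in reverse, and on $\Gamma_\t$ we have $N_3=1$ so the top boundary term equals $\int_{\Gamma_\t} fg N_3\,\d x'$. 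No real obstacle is expected; the only point requiring care is the verification $\chi(-b)=0$ (which justifies dropping the $\Gamma_\b$ contribution in the tangential case) and the boundary identifications $\p_i\vp|_{\Gamma_\t}=-N_i$ for $i=1,2$ and $N_3=1$, which together ensure the boundary integrals assemble into a single $\int_{\Gamma_\t} fg N_i\,\d x'$.
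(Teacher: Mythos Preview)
Your proof is correct and follows essentially the same route as the paper: both reduce to ordinary integration by parts on $\Omega$ after exploiting the cancellation of the mixed second derivatives of $\vp$, use $\p_i\vp|_{\Gamma_\b}=0$ to drop the bottom contribution for $i=1,2$, and identify $\p_i\vp|_{\Gamma_\t}=-N_i$. The only cosmetic difference is that the paper first applies the product rule $\p_i^\vp(fg)=(\p_i^\vp f)g+f(\p_i^\vp g)$ and then evaluates $\int_\Omega \p_i^\vp(fg)\,\p_3\vp\,\d x$ directly, whereas you rewrite $(\p_i^\vp f)\p_3\vp$ in divergence form at the outset; the computations are otherwise identical.
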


\begin{proof}
  We consider the cases when $i=1,\,2$ and $i=3$ respectively.  We have
  \begin{equation*}
    \int_{\Omega}\left(\p_i^\vp f\right)g \p_3\vp\d x = \underbrace{\int_{\Omega}\p_i^\vp (fg) \p_3\vp\d x}_{C} - \int_{\Omega} f \left(\p_i^\vp g\right) \p_3\vp\d x.
  \end{equation*}
  Let $i=1,2$. Note that $\p_i \vp\big|_{\Gamma_\b}=0$ and $\p_i \vp\big|_{\Gamma_\t}=\p_i \psi$. We expand C as
  \begin{equation*}
    \begin{aligned}
    C = & \int_{\Omega}\p_i(fg) \p_3\vp\d x - \int_{\Omega}\p_3(fg)\p_i\vp\d x\\
    =& - \int_{\Omega}(fg) \p_i\p_3\vp\d x - \left\{\int_{\Gamma_\t}(fg) \p_i\vp\d x' - \int_{\Gamma_\b}(fg) \p_i\vp\d x' - \int_{\Omega}(fg) \p_3\p_i\vp\d x\right\}\\
    = & \int_{\Gamma_\t}fg N_i \d x'.
    \end{aligned}
  \end{equation*}
On the other hand, in the case when $i=3$, since $g|_{\Gamma_\b}=0$, we have
  \begin{equation*}
    \int_{\Omega}\left(\p_3^\vp f\right)g \p_3\vp\d x = \int_{\Omega} (\p_3 f) g\d x =  \int_{\Gamma_\t}fg \d x' -\int_{\Omega}f \left(\p_3^\vp g\right) \p_3\vp\d x.
  \end{equation*}
\end{proof}
\begin{theorem}\label{thm-A3}
  Let $f$ be described as in Lemma \ref{lem-A1}. Then we have
  \begin{equation}\label{eq-A3}
    \frac{1}{2}\frac{d}{dt} \int_{\Omega}|f|^2 \p_3\vp\d x = \int_{\Omega}\left(D_t^\vp f\right) f\p_3\vp\d x.
  \end{equation}
\end{theorem}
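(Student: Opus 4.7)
The plan is to derive \eqref{eq-A3} by combining Lemma \ref{lem-A1} applied with $g = f$ with an integration-by-parts identity (Lemma \ref{lem-A2}) that rewrites the convective part of $D_t^\vp$ as a boundary contribution, using crucially the incompressibility $\ppk \cdot v = 0$ and the kinematic boundary conditions verified by $v$.

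First I would apply Lemma \ref{lem-A1} with $g = f$ to obtain
\begin{equation*}
\frac{d}{dt}\int_\Omega |f|^2 \,\p_3\vp\,\d x = 2\int_\Omega (\p_t^\vp f)\, f\, \p_3\vp\,\d x + \int_{\Gamma_\t} |f|^2 \p_t\psi\,\d x'.
\end{equation*}
Since $D_t^\vp f = \p_t^\vp f + v^i \ppk_i f$, it then suffices to prove the identity
\begin{equation*}
\int_\Omega v^i (\ppk_i f)\, f\, \p_3\vp\,\d x = \tfrac{1}{2}\int_{\Gamma_\t} |f|^2 \p_t\psi\,\d x'.
\end{equation*}

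To establish this, I would use the chain rule $2\, v^i (\ppk_i f) f = v^i \ppk_i (|f|^2) = \ppk_i (v^i |f|^2) - |f|^2 (\ppk_i v^i)$, and then invoke $\ppk \cdot v = 0$ (the second equation in \eqref{eqs-incomp-Euler-fix}) to discard the last term. This leaves
\begin{equation*}
2\int_\Omega v^i (\ppk_i f) f\, \p_3\vp\,\d x = \int_\Omega \ppk_i (v^i |f|^2)\, \p_3\vp\,\d x,
\end{equation*}
and I would evaluate the right-hand side by Lemma \ref{lem-A2}. The tangential indices $i=1,2$ are handled directly by \eqref{eq-A2} (taking $g=1$), while for $i=3$ I would use \eqref{eq-A2-1}, which is applicable because the boundary condition $v\cdot n = 0$ on $\Gamma_\b$ together with $n = (0,0,1)^T$ gives $v^3|f|^2\big|_{\Gamma_\b} = 0$. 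Summing over $i$ yields
\begin{equation*}
\int_\Omega \ppk_i (v^i |f|^2)\,\p_3\vp\,\d x = \int_{\Gamma_\t} |f|^2 (v \cdot N)\,\d x',
\end{equation*}
and the kinematic boundary condition $v\cdot N = \p_t\psi$ on $\Gamma_\t$ (the third equation in \eqref{eqs-incomp-Euler-fix}) delivers exactly the boundary integral needed to cancel the one produced by Lemma \ref{lem-A1}, completing the proof.

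The computation is short, so there is no genuine obstacle beyond bookkeeping; the only subtle point to get right is that the component $i=3$ of the divergence-form integration by parts requires the vanishing of the integrand on $\Gamma_\b$, which is the reason why \eqref{eq-A2-1} (as opposed to \eqref{eq-A2}) is invoked there, and this is precisely where the slip condition $v\cdot n = 0$ on the fixed bottom enters.
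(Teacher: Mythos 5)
Your proof is correct and follows essentially the same strategy as the paper's: apply Lemma \ref{lem-A1} (with $g=f$) for the $\p_t^\vp$ part, rewrite the convective part in divergence form using incompressibility, and integrate by parts via Lemma \ref{lem-A2} so that the bottom boundary term drops by $v\cdot n=0$ on $\Gamma_\b$ and the top boundary term cancels via $v\cdot N=\p_t\psi$. The only cosmetic difference is that you argue from the left-hand side toward the right while the paper expands the right-hand side; the ingredients and the bookkeeping are identical.
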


\begin{proof}
  We expand the RHS of \eqref{eq-A3} to get
  \begin{equation*}
    \int_{\Omega}\left(D_t^\vp f\right) f\p_3\vp\d x = \underbrace{\int_{\Omega}\left(\p_t^\vp f\right) f\p_3\vp\d x}_{i} + \underbrace{\int_{\Omega}\left(v\cdot\ppk f\right) f\p_3\vp\d x}_{ii}.
  \end{equation*}
  Invoking Lemma \ref{lem-A1},
  \begin{equation*}
    \begin{aligned}
    i = & \frac{d}{dt}\int_{\Omega}|f|^2 \p_3\vp \d x - \int_{\Omega}f \left(\p_t^\vp f\right) \p_3\vp\d x - \int_{\Gamma_\t} |f|^2 \p_t \psi\d x' \\
    = & \frac{1}{2} \frac{d}{dt}\int_{\Omega}|f|^2 \p_3\vp \d x - \frac{1}{2} \int_{\Gamma_\t} |f|^2 \p_t \psi\d x'.
    \end{aligned}
  \end{equation*}
 Also, since $\ppk \cdot v = 0$ and $(v\cdot\N)\big|_{\Gamma_\b}=0$, Lemma \ref{lem-A2} indicates
  \begin{equation*}
    ii = \frac{1}{2}\int_{\Omega}\ppk\cdot(v|f|^2)\p_3\vp\d x - \frac{1}{2}\int_{\Omega}\left(\ppk \cdot v\right)|f|^2\p_3\vp\d x = \frac{1}{2}\int_{\Gamma_\t} |f|^2 v\cdot N \d x'.
  \end{equation*}
  Since $\p_t \psi = v\cdot N$ on $\Gamma_\t$, we complete the proof by summing up $i$ and $ii$.

\end{proof}
\begin{corollary}\label{cor-A4}
  Let $f,\, g$ be defined as in Lemma \ref{lem-A1}. Then it holds that
  \begin{equation}\label{eq-A4}
    \frac{d}{dt} \int_{\Omega} f g \p_3\vp \d x = \int_{\Omega} \left(D_t^\vp f\right)  g \p_3\vp \d x + \int_{\Omega} f \left(D_t^\vp g\right) \p_3\vp \d x.
  \end{equation}
\end{corollary}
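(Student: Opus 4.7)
\noindent\textbf{Proof proposal for Corollary \ref{cor-A4}.}
The plan is to derive \eqref{eq-A4} by polarization from Theorem \ref{thm-A3}. Concretely, I would apply \eqref{eq-A3} to the smooth function $f+g$, which produces
\begin{equation*}
\frac{1}{2}\frac{d}{dt}\int_{\Omega}|f+g|^2\,\p_3\vp\,\d x=\int_{\Omega}\bigl(D_t^\vp(f+g)\bigr)(f+g)\,\p_3\vp\,\d x,
\end{equation*}
and then subtract the two identities obtained by applying \eqref{eq-A3} to $f$ and to $g$ separately. Since $D_t^\vp$ is a first-order linear differential operator, $D_t^\vp(f+g)=D_t^\vp f+D_t^\vp g$, and the cross-term expansion on the RHS yields exactly $\int_{\Omega}(D_t^\vp f)g\,\p_3\vp\,\d x+\int_{\Omega}f(D_t^\vp g)\,\p_3\vp\,\d x$, while the cross-term on the LHS yields $\frac{d}{dt}\int_{\Omega}fg\,\p_3\vp\,\d x$.

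Alternatively, and perhaps more instructively, I would give a direct derivation starting from Lemma \ref{lem-A1}, which reads
\begin{equation*}
\frac{d}{dt}\int_{\Omega}fg\,\p_3\vp\,\d x=\int_{\Omega}(\p_t^\vp f)g\,\p_3\vp\,\d x+\int_{\Omega}f(\p_t^\vp g)\,\p_3\vp\,\d x+\int_{\Gamma_\t}fg\,\p_t\psi\,\d x'.
\end{equation*}
Using $D_t^\vp=\p_t^\vp+v\cdot\pp$, the task reduces to showing that
\begin{equation*}
\int_{\Omega}(v\cdot\pp f)g\,\p_3\vp\,\d x+\int_{\Omega}f(v\cdot\pp g)\,\p_3\vp\,\d x=\int_{\Gamma_\t}fg\,\p_t\psi\,\d x'.
\end{equation*}
The left-hand side equals $\int_{\Omega}\pp\cdot(vfg)\,\p_3\vp\,\d x-\int_{\Omega}(\pp\cdot v)fg\,\p_3\vp\,\d x$, and the second integral vanishes because $\pp\cdot v=0$ by \eqref{eqs-incomp-Euler-fix}. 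Then Lemma \ref{lem-A2} (componentwise, using $v\cdot n=0$ on $\Gamma_\b$) turns the first integral into the boundary term $\int_{\Gamma_\t}fg\,(v\cdot N)\,\d x'$, and the kinematic boundary condition $\p_t\psi=v\cdot N$ on $\Gamma_\t$ closes the identity.

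There is essentially no obstacle here: the corollary is a purely formal consequence of the Reynolds-type identity already proved in Lemma \ref{lem-A1}, the divergence-free condition $\pp\cdot v=0$, the integration by parts formula of Lemma \ref{lem-A2}, and the kinematic boundary condition. The only point requiring a little care is that when applying Lemma \ref{lem-A2} to the third component one needs $v_3|_{\Gamma_\b}=0$, which follows from $v\cdot n=0$ on $\Gamma_\b$, and this is precisely what makes the bottom boundary contribution vanish so that the only surface term left is on $\Gamma_\t$.
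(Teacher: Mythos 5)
Your proposal is correct, and you offer two valid routes. The second (direct) route is essentially the paper's own argument: solve \eqref{eq-A1} for the $\p_t^\vp$-part, then convert the transport terms into a boundary integral using the integration-by-parts identities of Lemma \ref{lem-A2} together with $\ppk\cdot v=0$, and finally cancel the $\Gamma_\t$-terms via $\p_t\psi=v\cdot N$. The only cosmetic difference is that you symmetrize first (combining $(v\cdot\ppk f)g+f(v\cdot\ppk g)=v\cdot\ppk(fg)$ before applying Lemma \ref{lem-A2}), whereas the paper integrates the one-sided term $\int_\Omega(v\cdot\ppk f)g\,\p_3\vp$ by parts directly; both land in the same place. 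Your remark that $v^3|_{\Gamma_\b}=0$ is what activates \eqref{eq-A2-1} for the third component is exactly right and is implicit in the paper's proof too.

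The first route — polarization from Theorem \ref{thm-A3} applied to $f+g$, $f$, and $g$ — is genuinely different from, and arguably cleaner than, the paper's proof, since it reuses the special case rather than reproducing its integration-by-parts bookkeeping. It relies only on linearity of $D_t^\vp$ and on $\ddt$ commuting with the difference of integrals, and avoids invoking Lemma \ref{lem-A2} a second time. This is a perfectly legitimate shortcut; the only reason to keep the direct argument is that it exposes where the divergence-free condition and the kinematic boundary condition enter, whereas the polarization argument hides them inside the proof of Theorem \ref{thm-A3}.
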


\begin{proof}
 We infer from \eqref{eq-A1} that
  \begin{equation*}
    \int_{\Omega} \left(\p_t^\vp f\right) g \p_3\vp \d x = \frac{d}{dt}\int_{\Omega} fg \p_3\vp \d x - \int_{\Omega} f\left(\p_t^\vp g\right) \p_3\vp \d x - \int_{\Gamma_\t} fg \p_t\psi\d x',
  \end{equation*}
  and \eqref{eq-A2}--\eqref{eq-A2-1} yield that
  \begin{equation*}
    \begin{aligned}
    \int_{\Omega} \left(v\cdot\ppk f\right) g \p_3\vp \d x = & \int_{\Omega} \ppk\cdot \left(v f\right) g \p_3\vp \d x - \int_{\Omega}  \left(\ppk\cdot v\right) fg \p_3\vp \d x\\
    = & \int_{\Gamma_\t} fg v\cdot N \d x' - \int_{\Omega} f \left(v\cdot\ppk g\right)\p_3\vp \d x.
    \end{aligned}
  \end{equation*}
  Then we get \eqref{eq-A4} by adding these identities up.
\end{proof}

\section{Calculus}
\begin{lemma} [\cite{Taylor}]\label{lem-B1}
  Let $s\geq 1$. There exists a constant $C > 0$ such that,
  \begin{itemize}
    \item[(1)] $\forall\,f,\,g\in H^s(\Omega)\cap C(\Omega)$, there holds
        \begin{equation}\label{Prod-Es}
          \|fg\|_s \leq C\big\{\|f\|_s\|g\|_\infty + \|f\|_\infty\|g\|_s\big\}.
        \end{equation}
    \item[(2)] If $f\in H^s(\Omega)\cap C^1(\Omega)$ and $g\in H^{s-1}(\Omega)\cap C(\Omega)$, then for $|\alpha|\leq s$,
    \begin{equation}\label{commu-es}
          \|[\p^\alpha,\,f]g\|_0 \leq C\big\{\|f\|_s\|g\|_\infty + \|f\|_{W^{1,\infty}(\Omega)}\|g\|_{s-1}\big\}.
        \end{equation}
  \end{itemize}
\end{lemma}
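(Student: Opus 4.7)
The plan is to establish both inequalities through the multi-index Leibniz rule combined with Gagliardo--Nirenberg interpolation, in the standard Moser style; since $\Omega = \T^2\times(-b,0]$ has a flat boundary, all Gagliardo--Nirenberg estimates on $\Omega$ follow from their $\R^3$ analogues via reflection across $\Gamma_{\t}\cup\Gamma_{\b}$ and periodic extension in $x'$, so no intricate boundary regularity is needed.

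For part (1), I would fix a multi-index $\alpha$ with $|\alpha|\le s$ and expand
$$
\p^\alpha(fg) \;=\; \sum_{\beta\le \alpha}\binom{\alpha}{\beta}\,\p^\beta f\cdot \p^{\alpha-\beta}g.
$$
The two extremal terms $\beta=\alpha$ and $\beta=0$ contribute $\|f\|_s\|g\|_\infty$ and $\|f\|_\infty\|g\|_s$ directly. For the intermediate range $1\le |\beta|\le s-1$, I would invoke the Gagliardo--Nirenberg inequality
$$
\|\p^\beta f\|_{L^{2s/|\beta|}(\Omega)} \;\le\; C\,\|f\|_\infty^{\,1-|\beta|/s}\,\|f\|_s^{\,|\beta|/s},
\qquad
\|\p^{\alpha-\beta}g\|_{L^{2s/|\alpha-\beta|}(\Omega)} \;\le\; C\,\|g\|_\infty^{\,1-|\alpha-\beta|/s}\,\|g\|_s^{\,|\alpha-\beta|/s},
$$
then apply H\"older with exponents $2s/|\beta|$ and $2s/|\alpha-\beta|$ (valid since $|\beta|+|\alpha-\beta|=|\alpha|\le s$), and finally use Young's inequality on the resulting geometric mean to absorb it into $\|f\|_s\|g\|_\infty+\|f\|_\infty\|g\|_s$. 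Summing over $\beta$ and over $|\alpha|\le s$ yields \eqref{Prod-Es}.

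For part (2), the commutator identity
$$
[\p^\alpha, f]g \;=\; \p^\alpha(fg) - f\,\p^\alpha g \;=\; \sum_{0<\beta\le \alpha}\binom{\alpha}{\beta}\,\p^\beta f \cdot \p^{\alpha-\beta}g
$$
forces every surviving term to contain at least one derivative of $f$. Writing $\p^\beta f = \p^{\beta'}(\p f)$ with $|\beta'|=|\beta|-1$, the sum becomes
$$
[\p^\alpha,f]g \;=\; \sum_{|\beta'|+|\alpha-\beta|=|\alpha|-1}\binom{\alpha}{\beta'+e_i}\,\p^{\beta'}(\p_i f)\cdot \p^{\alpha-\beta}g,
$$
which is precisely the $L^2$-norm of a product handled by part (1) at order $|\alpha|-1\le s-1$, applied to $(\p f)$ and $g$. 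Thus
$$
\|[\p^\alpha,f]g\|_0 \;\le\; C\bigl(\|\p f\|_{s-1}\|g\|_\infty + \|\p f\|_\infty \|g\|_{s-1}\bigr)
\;\le\; C\bigl(\|f\|_s\|g\|_\infty + \|f\|_{W^{1,\infty}(\Omega)}\|g\|_{s-1}\bigr),
$$
which is \eqref{commu-es}.

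There is no genuine obstacle: the only mild subtlety is justifying Gagliardo--Nirenberg on the slab domain $\Omega$, which is dispatched by extending functions by even reflection across $x_3=0$ and $x_3=-b$, inheriting the standard $\R^3$ (or $\R^2\times\R$ with periodicity) inequalities. An approximation argument via mollification then removes the requirement that $f,g$ be smooth, giving the stated estimates for $f,g\in H^s\cap C$ (respectively $H^s\cap C^1$ and $H^{s-1}\cap C$).
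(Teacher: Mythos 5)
Your argument is correct and reproduces the standard Moser-type proof (Leibniz rule plus Gagliardo--Nirenberg interpolation, with extension across the flat boundary) that the paper simply cites from Taylor's book without giving a proof. The reduction of the commutator bound in part~(2) to the intermediate $L^2$ product estimate from part~(1) applied to $\p f$ and $g$ at order $s-1$ is exactly the standard route, and the minor Hölder bookkeeping for $|\alpha|<s$ is harmless on a bounded domain since $L^q\hookrightarrow L^2$ for $q\ge 2$.
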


\blue{\begin{lemma} \label{thm-Linfty}
Let $f$ be a continuous function defined on $\Omega$.  Then
\begin{align}\label{Linfty ineq}
|f|_{\infty} \leq \|f\|_{\infty}.
\end{align}
\end{lemma}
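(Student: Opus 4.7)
The plan is to exploit continuity of $f$ to bridge the gap between pointwise and essential supremum. Recall that $\|\cdot\|_{\infty}$ and $|\cdot|_{\infty}$ are defined as essential suprema with respect to Lebesgue measure on $\Omega$ and surface measure on $\Gamma_{\t}$, respectively. Although $\Gamma_{\t} = \{x_3 = 0\}$ is contained in $\overline{\Omega}$, it carries zero three-dimensional Lebesgue measure, so the inequality is not immediate from set inclusion: one must use regularity of $f$ to transfer pointwise information on $\Gamma_{\t}$ into almost-everywhere information on $\Omega$.

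The argument I plan to carry out is the following. Fix an arbitrary $x_0 \in \Gamma_{\t}$ and $\epsilon > 0$. Since $f$ is continuous on $\Omega$ (which contains the upper boundary, as $\Omega = \{-b < x_3 \leq 0\}$), there exists an open neighborhood $U$ of $x_0$ in $\R^3$ such that $|f(y) - f(x_0)| < \epsilon$ for every $y \in U \cap \Omega$. The set $U \cap \Omega$ has strictly positive three-dimensional Lebesgue measure because $U$ contains points with $x_3 < 0$ arbitrarily close to $x_0$. By the definition of essential supremum,
\[
\|f\|_{\infty} \geq \esssup_{y \in U \cap \Omega} |f(y)| \geq |f(x_0)| - \epsilon.
\]
Letting $\epsilon \to 0$ yields $|f(x_0)| \leq \|f\|_{\infty}$. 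Since $x_0 \in \Gamma_{\t}$ was arbitrary and $f$ is continuous on $\Gamma_{\t}$ (so the essential supremum there coincides with the pointwise supremum), we conclude
\[
|f|_{\infty} = \sup_{x_0 \in \Gamma_{\t}} |f(x_0)| \leq \|f\|_{\infty},
\]
which is \eqref{Linfty ineq}.

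There is no real obstacle here; the only subtlety worth flagging is the measure-theoretic point that $\Gamma_{\t}$ is a null set in $\Omega$, which is precisely why the continuity hypothesis is indispensable and why the statement would fail for a generic $L^\infty$ function on $\Omega$. The application of this lemma in the main text (after Remark \ref{rmk I12} and in the estimates \eqref{es-I12 smh} and \eqref{es-I13 smh}) takes $f = \vb$, $\p_1 \vb$, or $\p_2 \vb$, and it is exactly the continuity hypothesis of these functions on $\Omega$ that allows the control norm $\vbi$ in $\K_2(t)$ to be dropped.
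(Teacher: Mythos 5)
Your proof is correct and is essentially the same idea as the paper's: continuity of $f$ propagates the boundary value at each point of $\Gamma_{\t}$ into a set of positive Lebesgue measure in $\Omega$, which then bounds it by $\|f\|_{\infty}$. The only cosmetic difference is that the paper frames this as a proof by contradiction at a point where $|f|$ attains its supremum on $\Gamma_{\t}$ (invoking compactness of $\Gamma_{\t}$), whereas your direct estimate at an arbitrary boundary point avoids that detour and is slightly cleaner.
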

\begin{proof}
We proceed with the proof by contradiction.
Let $\|f\|_{\infty} = L$. We assume that \eqref{Linfty ineq} is false, then there exists an $\epsilon>0$ such that
\begin{align} \label{Linfty false}
|f|_{\infty} \geq L+4\epsilon.
\end{align}
Since $\Gamma_{\t}$ is compact, and so there exists a point $\bar{x}\in \Gamma_{\t}$ such that $|f|_{\infty}=|f(\bar{x})|$. Let $\{x_n\}\subset \text{int }\Omega$ be the sequence that converges to $\bar{x}$, where $\text{int }\Omega$ is the interior of $\Omega$. Since $f$ is continuous on $\Omega$, there exists an $N>0$ such that $|f(x_N)-f(\bar{x})|<2\epsilon$. On the other hand, the continuity of $f$ also implies that there exists an $\delta>0$ such that $|f(x)-f(x_N)|<\epsilon$ holds for all $x\in B_{\delta}(x_N)$, i.e., the ball centered at $x_N$ with radius $\delta$. Thus, it holds that $|f(x)-f(\bar{x})|<3\epsilon$ for all $x\in B_{\delta}(x_N)$. Together with \eqref{Linfty false}, this implies that $\|f\|_{\infty}\geq L+\epsilon$, which contradicts the definition of $L$.
\end{proof}
\begin{remark}
This theorem is false if $f$ is merely continuous almost everywhere on $\Omega$. For instance, we consider $f:\Omega \to \R$ given by
\begin{align*}
f(x', x_3)= 
\begin{cases}
1,\quad |x'|<\frac{1}{2}, \,\,x_3=0,\\
0,\quad \text{otherwise}.
\end{cases}
\end{align*}
Then $f=0$ (and thus continuous) almost everywhere on $\Omega$, and thus $\|f\|_{\infty}=0$. However, $|f|_{\infty}=1$, which violates \eqref{Linfty ineq}. 
\end{remark}}

\section{The Hodge-type elliptic estimate}\label{appendix C}
\begin{theorem}
Let $X$ be a smooth vector field. Let $s\geq 1$ be an integer. Then
  \begin{equation}\label{es-Hodge-1'}
    \|X\|_s^2 \leq C_0(|\psi|_{C^s})\left(\|\ppk\cdot X\|_{s-1}^2 + \|\ppk\times X\|_{s-1}^2 + \|\overline{\partial}^s X\|_0^2+\|X\|_0^2\right).
  \end{equation}
\end{theorem}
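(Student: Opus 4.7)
The plan is to derive an explicit algebraic identity that expresses the normal derivative $\p_3 X$ as a linear combination of the tangential derivatives $\p_1 X,\p_2 X$ together with $\pp\cdot X$ and $\pp\times X$, and then to prove \eqref{es-Hodge-1'} by an outer induction on $s$ combined with a secondary induction on the number of $\p_3$-derivatives contained in a multi-index $\p^\alpha$.

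To produce the identity, I would use $\pp_3 = (\p_3\vp)^{-1}\p_3$ and $\pp_a = \p_a - (\p_a\vp/\p_3\vp)\p_3$ ($a=1,2$) to rewrite the definitions of $\pp\cdot X$ and $\pp\times X$ as a linear $3\times 3$ system in $(\p_3 X^1,\p_3 X^2,\p_3 X^3)$ whose determinant equals $1+|\TP\vp|^2\ge 1$. Inverting this system produces
\begin{equation*}
\p_3 X \;=\; \mathbf{M}(\p\vp)\cdot\bigl(\pp\cdot X,\;\pp\times X,\;\p_1 X,\;\p_2 X\bigr),
\end{equation*}
in which no factor $1/\p_3\vp$ survives; the entries of $\mathbf{M}$ are smooth in $\p\vp$, so their $C^{s-1}$-norms are controlled by $C(|\psi|_{C^s})$. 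The base case $s=1$ of \eqref{es-Hodge-1'} then follows at once from $\|X\|_1^2 \le C(\|\TP X\|_0^2 + \|\p_3 X\|_0^2 + \|X\|_0^2)$ and an $L^2$ application of the identity; no $c_0^{-1}$ appears in the resulting constant.

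For the inductive step $s\to s+1$, the task reduces to bounding $\|\p^\alpha X\|_0$ for each $|\alpha|=s+1$ by the right-hand side of \eqref{es-Hodge-1'}, which I would do by a secondary induction on $\alpha_3$. When $\alpha_3=0$ this is the trivial bound $\|\TP^\alpha X\|_0\le \|\TP^{s+1}X\|_0$. When $\alpha_3\ge 1$, write $\p^\alpha=\p^{\tilde\alpha}\p_3$ with $|\tilde\alpha|=s$ and apply $\p^{\tilde\alpha}$ to the identity. Each resulting factor $\p^{\tilde\alpha}(\mathbf{M}Y)$ splits into the leading piece $\mathbf{M}\p^{\tilde\alpha}Y$ plus the commutator $[\p^{\tilde\alpha},\mathbf{M}]Y$. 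For $Y\in\{\pp\cdot X,\pp\times X\}$ the leading piece is controlled by $\|\pp\cdot X\|_s$ or $\|\pp\times X\|_s$; for $Y=\p_\tau X$ it equals $\mathbf{M}\p^{\tilde\alpha+e_\tau}X$, whose $\p_3$-count equals $\tilde\alpha_3=\alpha_3-1$ and is thus handled by the secondary induction hypothesis. The commutator is dispatched by the Moser estimate \eqref{commu-es} of Lemma \ref{lem-B1} at cost $C(|\psi|_{C^{s+1}})\|X\|_s$, and this is absorbed via the primary induction hypothesis together with the tangential interpolation $\|\TP^k X\|_0 \le C(\|\TP^{k+1}X\|_0+\|X\|_0)$ on $\T^2\times(-b,0]$ (coming from Fourier expansion in the tangential variable), which ensures that the right-hand side of \eqref{es-Hodge-1'} at level $s$ is itself controlled by the one at level $s+1$.

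The main obstacle I anticipate is precisely the $Y=\p_\tau X$ contribution: a naive Moser bound would produce $\|\mathbf{M}\p_\tau X\|_s \lesssim \|\p_\tau X\|_s \lesssim \|X\|_{s+1}$, which is circular because $\|\p_\tau X\|_s$ contains normal derivatives of order up to $s+1$. The remedy --- and the key organizational device of the proof --- is to peel off the top-order piece $\mathbf{M}\p^{\tilde\alpha+e_\tau}X$ and route it through the secondary induction on $\alpha_3$ rather than through Moser, so that only strictly lower-order remainders are left for the commutator estimate to handle. All remaining steps are routine applications of Lemma \ref{lem-B1} together with the algebraic identity above.
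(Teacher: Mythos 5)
Your proposal is correct and rests on the same algebraic core as the paper's proof: one expresses $\p_3 X$ as a pointwise linear combination of $\pp\cdot X$, $\pp\times X$, and $\TP X$, and then inducts on the derivative count. The paper imports this step as Lemma B.1 of \cite{GLL} (the pointwise inequality \eqref{Hodge-type}), whereas you re-derive it directly by clearing the $\p_3\vp$-denominators, inverting a $3\times 3$ system with determinant $1+|\TP\vp|^2\ge 1$, and observing that no $1/\p_3\vp$ survives; this is the same identity in substance, but your version makes fully explicit why the constant in \eqref{es-Hodge-1'} carries no $c_0^{-1}$. The only real organizational difference is your secondary induction on $\alpha_3$, which replaces the paper's terse ``can be further reduced by repeating the steps above'' (for the residual term $\|\TP\p^\beta X\|_0$ when $\beta_3>0$) with an explicit iteration; both land on the same estimate, yours being more self-contained at the cost of some extra bookkeeping. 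One minor point to tighten: citing the Moser estimate \eqref{commu-es} for $[\p^{\tilde\alpha},\mathbf{M}]Y$ formally requires $Y\in L^\infty$, which you do not have for $Y=\pp\cdot X$; but the cruder Leibniz bound $\|[\p^{\tilde\alpha},\mathbf{M}]Y\|_0\le C(|\psi|_{C^{s+1}})\|Y\|_{s-1}$, obtained by putting each factor $\p^\gamma\mathbf{M}$ (with $|\gamma|\ge1$) in $L^\infty$, suffices and feeds into exactly the same absorptions you describe.
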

\begin{proof}
This theorem is essentially Lemma B.2 of \cite{GLL}, whose proof is built on the following Hodge-type decomposition,
\begin{align}\label{Hodge-type}
|\pp X| \leq C(|\psi|_{C^s})\left(|\pp\cdot X|+|\pp\times X|+|\TP X|\right),
\end{align}
which is Lemma B.1 of \cite{GLL}. We recall that $\pp_i = \A_i^j\p_j$, and $\p_i = (\A^{-1})_i^j \pp_j$, where
\begin{equation*}
\A := \begin{pmatrix}
1&0&-\frac{\p_1 \varphi}{\p_3\varphi}\\
0&1&-\frac{\p_2\varphi}{\p_3\varphi}\\
0&0&\frac{1}{\p_3\varphi}
\end{pmatrix}^T,\quad\,\,
\A^{-1} =
\begin{pmatrix}
1&0&\p_1\vp\\
0&1&\p_2 \vp\\
0&0&\p_3\vp
\end{pmatrix}^T.
\end{equation*}

We prove \eqref{es-Hodge-1'} by induction. When $s=1$, we derive \eqref{es-Hodge-1'} from \eqref{Hodge-type} after squaring and integrating in space. We next assume $s>1$, and \eqref{es-Hodge-1'} holds for all $m\leq s-1$.  Let $\beta=(\beta_1, \beta_2, \beta_3)$ be a multi-index with $|\beta|=s-1$. We write
\begin{align}\label{Hodge higher}
|\p_i\p^\beta X| = |(\A^{-1})^j_i \p_j \p^\beta X|\leq C(|\psi|_{C^s})|\pp_i \p^\beta X|,
\end{align}
and then  invoke \eqref{Hodge-type} to arrive at
\begin{align}
|\p_i\p^\beta X|^2\leq C(|\psi|_{C^s})\left(|\pp\cdot (\p^\beta X)|^2+|\pp\times (\p^\beta X)|^2+|\TP \p^\beta X|^2\right).
\end{align}
This leads to
\begin{align}\label{Hodge pre}
\|X\|_s^2 \leq C(|\psi|_{C^s})\left(\|\pp\cdot (\p^\beta X)\|_0^2+\|\pp\times (\p^\beta X)\|_0^2+\|\TP \p^\beta X\|_0^2\right)
\end{align}
after integrating in space.
For the first term on the RHS of \eqref{Hodge pre}, we have
\begin{align*}
\|\pp\cdot (\p^\beta X)\|_0^2 \leq  \|\pp\cdot X\|_{s-1}^2+\|[\p^\beta, \pp\cdot] X\|_0^2\leq C(|\psi|_{C^s})\left(\|\pp\cdot X\|_{s-1}^2+\|X\|_s^2\right),
\end{align*}
where $\|X\|_s^2$ is covered by the inductive hypothesis. In addition, the second term on the RHS of \eqref{Hodge pre} is treated similarly. Finally, since $\TP$ commutes with $\p$, the last term in \eqref{Hodge pre} is just $\|\p^\beta (\TP X)\|_0^2$, which can be further reduced by repeating the steps above.
\end{proof}
\end{appendix}

\end{document}